\newtheorem{theorem}{Theorem}
\newtheorem{proposition}[theorem]{Proposition}
\newtheorem{lemma}[theorem]{Lemma}
\newtheorem{corollary}[theorem]{Corollary}
\theoremstyle{definition}
\newtheorem{definition}[theorem]{Definition}
\newtheorem{example}[theorem]{Example}
\newtheorem{remark}[theorem]{Remark}
\definecolor{lightblue}{rgb}{0.8,0.8,1.0}
\definecolor{lightgreen}{rgb}{0.8,1.0,0.8}
\definecolor{pBlue}{RGB}{86,139,190}
\definecolor{pCyan}{RGB}{149,186,201}
\definecolor{pSand}{RGB}{184,166,121}
\definecolor{pAlgae}{RGB}{87,115,135}
\definecolor{pSkin}{RGB}{236,216,167}
\definecolor{pGray}{RGB}{156,175,156}
\definecolor{pPink}{RGB}{215,114,127}
\definecolor{pOrange}{RGB}{211,153,80}
\newcommand{\defin}[1]{%
\relax\ifmmode%
\textcolor{blue}{#1}%
\else\textcolor{blue}{\emph{#1}}%
\fi%
}
\newcommand{\thsup}{\textnormal{th}}
\newcommand{\setN}{\mathbb{N}}
\newcommand{\setZ}{\mathbb{Z}}
\newcommand{\setQ}{\mathbb{Q}}
\newcommand{\setC}{\mathbb{C}}
\newcommand{\avec}{\mathbf{a}}
\newcommand{\symS}{S}
\DeclareMathOperator{\rev}{rev}
\DeclareMathOperator{\flip}{flip}
\DeclareMathOperator{\revflip}{revflip}
\newcommand{\PAP}{\mathrm{P}}
\newcommand{\RRP}{\mathrm{MP}}
\newcommand{\rrp}{\mathrm{\mathfrak{mp}}}
\newcommand{\pap}{\mathrm{\mathfrak{p}}}
\newcommand{\SEF}{\mathcal{F}}  
\DeclareMathOperator{\sefToPerm}{\mathtt{sefToPerm}}
\tikzset{every picture/.append
  style={scale=1,
	baseline=(current bounding box.center),
	x=1em,
	y=1em,
	thinLine/.style={line width=0.7pt},
	thickLine/.style={line width=1.4pt,line join=round},
	snaking/.style={decorate,decoration={zigzag,segment length=1.0mm,amplitude=0.15mm}},
	fillGrey/.style={fill=black!30},
	entries/.style={xshift=-0.5em,yshift=-0.5em,font=\small},
	bgEntry/.style={xshift=-0.5em,yshift=-0.5em,font=\small,
		regular polygon,regular polygon sides=4,fill,inner sep=0pt,minimum size=1em
	},
	circled/.style={circle,draw,inner sep=0pt,minimum size=1em},
	mydot/.style={circle,fill=black,inner sep=0pt,minimum size=1mm}
	}
}
\title{Pattern-avoidance and Fuss--Catalan numbers}
\author[P. Alexandersson]{Per Alexandersson}
\address{Department of Mathematics, Stockholm University, Sweden}
\email{per.w.alexandersson@gmail.com}
\author[S.A. Fufa]{Samuel Asefa Fufa}
\address{Department of Mathematics, Addis Ababa University, Ethiopia}
\email{samuel.asefa@aau.edu.et}
\author[F. Getachew]{Frether Getachew}
\address{Department of Mathematics, Addis Ababa University, Ethiopia}
\email{frigetach@gmail.com}
\author[D. Qiu]{Dun Qiu}
\address{Mathematics Department, Beijing Jiaotong University, P.~R.~China}
\email{qiudun@bjtu.edu.cn}
\begin{document}

\begin{abstract}
We study a subset of permutations, where entries are restricted to having the same 
remainder as the index, modulo some integer $k \geq 2$.
We show that when also imposing the classical 132- or 213-avoidance restriction on the permutations, 
we recover the Fuss--Catalan numbers and some special cases of the Raney numbers.

Surprisingly, an analogous statement also holds when we 
impose the mod $k$ restriction on a Catalan family of subexcedant functions.

Finally, we completely enumerate all combinations of mod-$k$-alternating permutations,
avoiding two patterns of length 3. This is analogous to the systematic study
by Simion and Schmidt, of permutations avoiding two patterns of length 3.
\end{abstract}

\maketitle

\section{Introduction}

It is classically known that the number of permutations of length $n$
avoiding a permutation-pattern of length $3$, is counted by the Catalan number, $C_n$.
There is a plethora of other combinatorial objects which are counted by $C_n$,
see e.g., \cite{StanleyCatalan}.
One classical Catalan family are the \emph{Dyck paths}---paths using
steps in $\{(1,0),(0,1)\}$, starting at $(0,0)$, ending at $(n,n)$
and never going below the line $y=x$. This latter family can be generalized,
to instead counting paths never going below $y=kx$,
and ending at $(kn,n)$ for some fixed positive integer
$k$. 
The number of such paths are enumerated by the \emph{Fuss--Catalan numbers}, $C_n^{k+1}$.
In this paper, we find several \emph{Fuss}-generalizations of pattern-avoiding permutations,
by imposing additional restrictions on the entries in the permutations.

A parity-alternating permutation starting with an odd integer (PAP),
sends even integers to even integers and odd integers to odd. 
This set of permutations have been studied previously in
\cite{Tanimoto2010} when considering the ascent statistic and signed Eulerian numbers,
and later in \cite{KebedeRakotondrajao2021}, where parity-alternating dearrangements
are enumerated.
We generalize the notion of parity-alternating permutations,
to \emph{mod-$k$-alternating permutations}. Here, we require
that $\pi_i$ is congruent to $i$ mod $k$, where $k \geq 1$ is a fixed integer.
This notion is not to be confused with the generalization in
\cite{Munagi2014}, where the size of blocks of entries with
same parity is restricted (but not restricted to be $1$).

We enumerate the mod-$k$-alternating permutations under pattern-avoidance,
for each of the patterns in $\{132,213,231,312\}$.
Moreover, we also enumerate the mod-$k$-alternating
permutations which avoids two patterns of length $3$.
The number of mod-$k$-alternating permutations of length $n$
avoiding $\sigma$ is denoted $\rrp_\sigma(n,k)$.

We also consider subexcedant functions.
These are simply words of positive integers, $f_1,f_2,\dotsc,f_n$,
such that $f_i \leq i$. There is a (specific) bijection between 
permutations of length $n$ and subexcedant functions of length $n$ 
(introduced in \cite{MantaciRakotondrajao2001}) so one can study 
subexcedant functions instead of permutations.
Under this bijection, the mod-$k$-alternating restriction translates to 
the exact same restriction on the subexedant function.
In other words, $f_i \equiv i$ mod $k$, see \cref{prop:sefRRPs}.

\subsection{Main results}

In \cref{sec:modKPattern}, we prove the following theorem.
In particular, this shows that $\rrp_\sigma(n,k)$ is a Raney 
number and that $\rrp_\sigma(km,k)$ is a Fuss--Catalan number.
\begin{theorem}[{\cref{cor:mainFussCatalan} below}]
Let $\sigma \in \{132,213\}$ and $n,k \geq 1$.
Let $m$ and $j$ be defined via $n=km+j$ with $0 \leq j < k$.
Then
\[
\rrp_\sigma(n,k) =  \frac{j+1}{km+j+1} \binom{ (k+1)m + j }{km+j}
\]
and in particular
\[
\rrp_\sigma(km,k) = \frac{1}{(k+1)m+1} \binom{(k+1)m+1}{m}
	= \frac{1}{km+1} \binom{(k+1)m}{m}.
\]
\end{theorem}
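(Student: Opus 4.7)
I would prove the $\sigma = 132$ case directly by a recursive decomposition and deduce the $\sigma = 213$ case from it by reverse-complementation. Explicitly, the map $\pi \mapsto \pi^{\mathrm{rc}}$ with $\pi^{\mathrm{rc}}_i := n+1-\pi_{n+1-i}$ is an involution on $S_n$ that sends $213$-avoiders to $132$-avoiders, and it preserves the mod-$k$-alternating condition, because $\pi_{n+1-i} \equiv n+1-i \pmod{k}$ forces $n+1 - \pi_{n+1-i} \equiv i \pmod{k}$. Hence $\rrp_{213}(n,k) = \rrp_{132}(n,k)$, and it is enough to handle $\sigma = 132$.

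For $\sigma = 132$, I would decompose at the position $p$ of the maximum value $n$. Classically, in any $132$-avoider with $\pi_p = n$, the left block $\pi_1 \dotsb \pi_{p-1}$ is a $132$-avoiding arrangement of $\{n-p+1,\dotsc,n-1\}$ and the right block $\pi_{p+1}\dotsb \pi_n$ is a $132$-avoiding arrangement of $\{1,\dotsc,n-p\}$. The constraint $\pi_p \equiv p \pmod{k}$ forces $p \equiv j \pmod{k}$. Subtracting $n-p \equiv 0 \pmod{k}$ from the left block turns it into an ordinary mod-$k$-alternating $132$-avoider of length $p-1$, while the right block, after re-indexing, becomes a $132$-avoider of length $n-p$ satisfying the shifted congruence $\tau_i \equiv i + j \pmod{k}$. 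Because this shift is nontrivial once $j \ge 1$, I would introduce the auxiliary quantity $B(M,k,e)$, counting $132$-avoiding permutations of $\{1,\dotsc,kM\}$ with $\pi_i \equiv i+e \pmod{k}$, and apply the analogous max-position decomposition to it. A short bookkeeping argument shows that every sub-block arising in every recursive call is enumerated either by $\rrp_{132}(km'+j', k)$ with $0 \le j' < k$ or by $B(M',k,e')$, so the system of recurrences closes.

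At the generating-function level, setting $F_j(x) := \sum_{m \ge 0} \rrp_{132}(km+j,k) \, x^m$, the recurrences collapse to $F_0 = 1 + x F_{k-1} F_0$ and $F_j = F_{j-1}(1 + x F_{k-j-1} F_j)$ for $1 \le j < k$, with $F_j(0)=1$. These are all simultaneously satisfied by $F_j(x) = T(x)^{j+1}$, where $T$ is the Fuss--Catalan series obeying $T = 1 + x T^{k+1}$, because $F_{j-1}(1 + x F_{k-j-1} F_j) = T^j(1 + x T^{k-j}\cdot T^{j+1}) = T^j(1 + x T^{k+1}) = T^{j+1}$. Lagrange inversion applied to $T$ gives $[x^m] T^{j+1} = \tfrac{j+1}{(k+1)m+j+1}\binom{(k+1)m+j+1}{m}$, which rearranges into the claimed closed form. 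The main technical obstacle is the decomposition bookkeeping: tracking how the shift parameter $e$ transforms in each recursive call and confirming that the two families $\{\rrp_{132}(\cdot,k)\}$ and $\{B(\cdot,k,\cdot)\}$ really close up into a finite system indexed by $e \in \{0,1,\dotsc,k-1\}$. Once that is in place, the generating-function identity is immediate from the Fuss--Catalan functional equation.
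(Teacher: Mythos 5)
Your proof is correct, and although it shares the paper's overall architecture (a symmetry reducing $213$ to $132$, a recursive decomposition at an extremal entry, generating functions, Lagrange inversion), the execution differs in two substantive ways. The paper decomposes $213$-avoiders at the position of the entry $1$, which is forced to sit at a position $\equiv 1 \pmod{k}$; this yields the single recursion $a_k(n+1)=\sum_{k\mid i}a_k(i)a_k(n-i)$ in one stroke (\cref{prop:213Recursion}), but at the cost of asserting that the standardized prefix block is again an \emph{ordinary} mod-$k$-alternating avoider, when in fact it lies in a shifted class $\RRP_{213}(j,k,r)$ with $r\equiv -n \pmod{k}$ unless $k$ divides the total length; the paper only identifies the shifted and unshifted counts later (\cref{prop:213kgeq3}). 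You instead decompose $132$-avoiders at the maximum, whose position is only $\equiv j\pmod{k}$, so the shifted family $B(M,k,e)$ enters from the start; your tracking of how $e$ propagates (left block always ordinary, right block shifted by $j$ in the first decomposition, both blocks ordinary when decomposing $B$) is essentially the careful version of the step the paper elides, and your coupled system $F_0=1+xF_{k-1}F_0$, $F_j=F_{j-1}(1+xF_{k-j-1}F_j)$ is correct and is solved by $F_j=T^{j+1}$ with $T=1+xT^{k+1}$ — you should add one sentence noting that the system determines the coefficients uniquely (immediate by induction on the power of $x$ and then on $j$), so that this verification actually identifies the counting series. Second, your coefficient extraction $[x^m]T^{j+1}$ needs only the standard integer-power Lagrange--B\"urmann formula with $\phi(u)=(1+u)^{k+1}$ and $H(u)=(1+u)^{j+1}$, whereas the paper's substitution $B_{k,j}=\bigl(1+tB_{k,j}^{(k+1)/(j+1)}\bigr)^{j+1}$ forces it to develop a fractional-exponent Lagrange inversion in an appendix; your route is cleaner on this point, and the final rearrangement $\frac{j+1}{(k+1)m+j+1}\binom{(k+1)m+j+1}{m}=\frac{j+1}{km+j+1}\binom{(k+1)m+j}{km+j}$ checks out.
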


In \cref{sec:subex}, we prove our second main result.
We let \defin{$\rrp_C(n,k)$} denote the set of mod-$k$-alternating
permutations whose subexcedant function satisfy the following two properties:
\begin{itemize}
 \item $f_i \equiv i$ mod $k$,
 \item $1 \leq f_i \leq f_{i-1}+1$ for all $i \in \{2,3,\dotsc,n\}$.
\end{itemize}
The first property is analogous to the mod-$k$-alternating property 
on permutations. Subexcedant functions which satisfy the second property
are enumerated by the Catalan numbers; the second condition is therefore
a natural analog to permutations avoiding a fixed length-3 pattern.
We have the following result:
\begin{theorem}[{\cref{cor:subexFuss} below}]
Let $m$ and $j$ be defined via $n=km+j$ with $0 \leq j < k$.
Then
\[
\rrp_C(n,k) =  \frac{j+1}{km+j+1} \binom{ (k+1)m + j }{km+j}
\]
and in particular
\[
\rrp_C(km,k) = \frac{1}{(k+1)m+1} \binom{(k+1)m+1}{m}
	= \frac{1}{km+1} \binom{(k+1)m}{m}.
\]
\end{theorem}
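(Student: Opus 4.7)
The plan is to reformulate the problem via a change of variables that removes the congruence condition, then count via a block decomposition and a cycle-lemma argument.

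First, I would write $f_i = 1 + ((i-1)\bmod k) + kF_i$ for a unique non-negative integer $F_i$. Under this substitution, the condition $f_i \equiv i \pmod{k}$ is automatic, the subexcedant condition $f_i \le i$ turns out to be implied by the remaining constraints (as the maximum of $F_i$ is $\lfloor (i-1)/k\rfloor$), and a short case analysis on the residue of $i$ modulo $k$ shows that $f_i \le f_{i-1}+1$ is equivalent to
\[
F_i \le \begin{cases} F_{i-1}+1 & \text{if } i \equiv 1 \pmod{k}, \\ F_{i-1} & \text{otherwise,}\end{cases}
\]
together with $F_1 = 0$ and $F_i \ge 0$. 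Thus $\rrp_C(n,k)$ is in bijection with integer sequences $(F_1, \ldots, F_n)$ obeying these constraints.

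Next, I would group the positions into $m$ full blocks of length $k$, followed by a partial block of length $j$. Within each block, $F$ is weakly decreasing, except possibly at the first position, where it may jump up by at most $1$ from the last value of the previous block; in particular the first block forces $F_1 = \cdots = F_k = 0$. Writing $v_\ell$ and $u_\ell$ for the start and end values of block $\ell$, the number of ways to fill that block's interior is $\binom{v_\ell - u_\ell + k-2}{k-2}$; summing $v_\ell$ over $[u_\ell, u_{\ell-1}+1]$ via the hockey-stick identity produces $\binom{u_{\ell-1} - u_\ell + k}{k-1}$ admissible configurations per block given the endpoints $u_{\ell-1}$ and $u_\ell$, with $u_1 = 0$. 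The partial block of length $j$ contributes a truncated analog that I would compute similarly.

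Finally, the remaining sum ranges over admissible endpoint sequences $(u_2,\dots,u_m,u_{m+1})$ satisfying $u_\ell \ge 0$ and $u_\ell \le u_{\ell-1}+1$, which is a classical ballot-type count. Expanding each factor $\binom{u_{\ell-1}-u_\ell+k}{k-1}$ as the number of weak $k$-compositions of the drop $u_{\ell-1}+1-u_\ell$, the total reduces to counting non-negative integer sequences of length about $k(m-1)$ subject to a linear partial-sum condition, and a direct application of the cycle lemma yields the Raney number $\frac{j+1}{km+j+1}\binom{(k+1)m+j}{km+j}$. The main obstacle is this final step, especially the bookkeeping for the partial block when $j>0$; as a fallback, one could instead produce a direct bijection between $\rrp_C(n,k)$ and $\rrp_{213}(n,k)$ inside the set of mod-$k$-alternating subexcedant functions, and invoke the first main theorem, since both sets carry the same block structure after the $F_i$ reparameterization.
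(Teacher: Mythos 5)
Your reparameterization $f_i = 1 + ((i-1)\bmod k) + kF_i$ is correct, and the resulting description of $\rrp_C(n,k)$ as counting sequences $(F_1,\dotsc,F_n)$ of non-negative integers that are weakly decreasing within each block of $k$ consecutive positions and may increase by at most $1$ at block boundaries is a clean and accurate translation (I checked that the subexcedance condition is indeed implied, and your block-by-block binomial count together with the hockey-stick summation gives the right numbers in small cases, e.g.\ $n=6$, $k=2$ yields $12$). However, the proof as written has a genuine gap exactly where you place it: the final step, in which the sum
\[
\sum_{\substack{d_2,\dotsc,d_m\ge 0\\ d_2+\dotsb+d_L\le L-1}}\;\binom{m-\textstyle\sum_\ell d_\ell+j}{j}\prod_{\ell=2}^{m}\binom{d_\ell+k-1}{k-1}
\]
must be shown to equal $\frac{j+1}{km+j+1}\binom{(k+1)m+j}{km+j}$, is asserted rather than carried out. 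This is not a routine invocation of the cycle lemma: after expanding each $\binom{d_\ell+k-1}{k-1}$ into weak $k$-compositions one obtains partial-sum constraints only at every $k$-th position, plus an unconstrained tail of length $j+1$ whose total is forced, and the bookkeeping needed to turn this into the Dvoretzky--Motzkin setup (or a rational-Dyck-path count) is precisely the hard part of the argument. Your fallback---a direct bijection with $\RRP_{213}(n,k)$---is likewise only gestured at.

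For comparison, the paper takes a much shorter route that avoids all of this: splitting the Catalan word $f_\pi$ at its \emph{last} occurrence of the letter $1$ (which necessarily sits at a position $\equiv 1 \bmod k$) shows that $r_k(n)=\rrp_C(n,k)$ satisfies the same recursion $r_k(n+1)=\sum_{k\mid j}r_k(j)\,r_k(n-j)$ already solved in \cref{prop:generalFormula} by generating functions and Lagrange inversion, so the closed form follows with no new enumeration. Your approach, if completed, would be a genuinely different and more self-contained proof (a direct summation identity for the Raney numbers rather than a functional-equation argument), but to make it a proof you must either execute the cycle-lemma computation in full---including the $j>0$ tail---or notice that your $F_i$-description immediately yields the paper's one-line recursion (split at the last index $i\equiv 1\bmod k$ with $F_i=0$), which would collapse your argument into theirs.
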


Finally, in \cref{sec:2patterns} we systematically enumerate 
all families of mod-$k$-alternating permutations 
avoiding two (different) patterns of length 3.

\section{Preliminaries}

Given a word $w = [w_1,\dotsc,w_\ell]$ and an integer $n$, we let $n+w$ denote
the word $[w_1+n,w_2+2,\dotsc,w_\ell+n]$. This convention is extended
to the expressions $w-n$ and $n-w$ in the natural manner.
If $\alpha = [\alpha_1,\dotsc,\alpha_r]$, $\beta = [\beta_1,\dotsc,\beta_s]$
are words, then $[\alpha,\beta]$ denotes the concatenation
$
 [\alpha_1,\dotsc,\alpha_r, \beta_1,\dotsc,\beta_s]
$.
We use several variants of this convention. 
We use the term \defin{subword} to denote a sequence 
of consecutive entries in a word.

The set $\{1,2,\dotsc,n\}$ is denoted by $\defin{[n]}$ 
(the context should make it clear that this is not
to be interpreted as the word of one letter).
The set of permutations of $[n]$ is denoted by $\symS_n$.
We use brackets for permutations in one-line notation and regular parenthesis to denote cycles.
We make a use of four well-known involutions on $\symS_n$,
\emph{inverse}, \emph{reverse}, \emph{flip} and \emph{revflip};
for $\pi = [\pi_1,\dotsc,\pi_n] \in \symS_n$, we let
\[
\defin{\rev(\pi)} \coloneqq [\pi_n,\pi_{n-1},\dotsc,\pi_1], \;
\defin{\flip(\pi)} \coloneqq n+1-\pi \;
\text{ and } \;
\defin{\revflip(\pi)} \coloneqq \flip(\rev(\pi)).
\]

For permutations $\pi,\omega \in \symS_n$, we think
of multiplication as composition of functions, so that
$(\pi \circ \omega)(k) = \pi(\omega(k))$ whenever $1 \leq k \leq n$.
Given $\pi$, we associate its \defin{permutation matrix},
\[
\defin{M_\pi} \coloneqq \left( \delta_{i, \pi(j)} \right)_{1 \leq i, j \leq n}.
\]
The permutation matrix of the composition $\pi \circ \omega$ is $M_{\pi} M_{\sigma}$.
We follow the convention in \cite{Kitaev2011}
and illustrate permutation matrices by letting row indices start at the \emph{bottom},
as in \cref{fig:permMatrix}. From now on, this is the picture we have in mind when
referring to permutation matrices.
\begin{figure}[!ht]
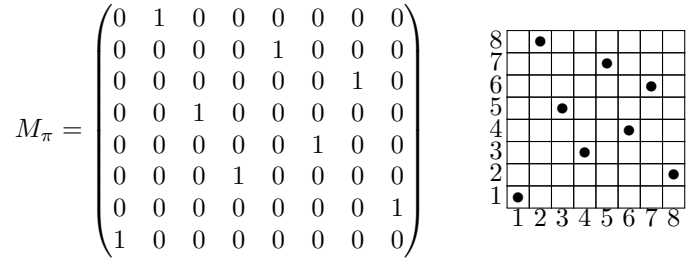

	\[
	M_\pi =
	\begin{pmatrix}
	0 & 1 & 0 & 0 & 0 & 0 & 0 & 0 \\
	0 & 0 & 0 & 0 & 1 & 0 & 0 & 0 \\
	0 & 0 & 0 & 0 & 0 & 0 & 1 & 0 \\
	0 & 0 & 1 & 0 & 0 & 0 & 0 & 0 \\
	0 & 0 & 0 & 0 & 0 & 1 & 0 & 0 \\
	0 & 0 & 0 & 1 & 0 & 0 & 0 & 0 \\
	0 & 0 & 0 & 0 & 0 & 0 & 0 & 1 \\
	1 & 0 & 0 & 0 & 0 & 0 & 0 & 0
	\end{pmatrix}
	\qquad
\ytableausetup{boxsize=0.8em}
	\begin{ytableau}
	\none[8]& \, & \bullet & \, & \, & \, & \, & \, & \, \\
	\none[7]& \, & \, & \, & \, & \bullet & \, & \, & \, \\
	\none[6]&\, & \, & \, & \, & \, & \, & \bullet & \, \\
	\none[5]& \, & \, & \bullet & \, & \, & \, & \, & \, \\
	\none[4]& \, & \, & \, & \, & \, & \bullet & \, & \, \\
	\none[3]& \, & \, & \, & \bullet & \, & \, & \, & \, \\
	\none[2]& \, & \, & \, & \, & \, & \, & \, & \bullet \\
	\none[1]& \bullet & \, & \, & \, & \, & \, & \, & \, \\
	\none   & \none[1]&\none[2]&\none[3]&\none[4]&\none[5]&\none[6]&\none[7]&\none[8]
	\end{ytableau}
	\]
	\caption{
		The permutation matrix $M_\pi$ associated with $\pi = [1, 8, 5, 3, 7, 4, 6, 2]$,
		where the bottom row is \emph{the first row}.
		The rightmost figure is simply the graph of the function $i \mapsto \pi(i)$
		in Cartesian coordinates.
	}\label{fig:permMatrix}
\end{figure}

\subsection{Permutation patterns}

A permutation $\pi \in \symS_n$ is said to \defin{contain} the pattern $\sigma \in \symS_k$,
if there is a subsequence of $\pi$ which is order-isomorphic to $\sigma$.
Otherwise, $\pi$ is said to avoid $\sigma$ and $\defin{\symS_{\sigma}(n)}$ denotes
the set of $\sigma$-avoiding permutations over $[n]$. We simply write a pattern as a word.
For example, $\pi = [1, 8, 5, 3, 7, 4, 6, 2]$ contains the pattern $231$
as the subsequence $3, 6, 2$ in $\pi$ have its elements in the same relative order.
However, $[1, 7, 6, 4, 2, 3, 5, 8]$ avoids the pattern $231$.

\begin{figure}[!ht]
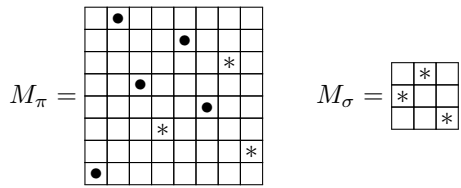

	\[
	M_\pi =
	\ytableausetup{boxsize=0.8em}
	\begin{ytableau}
	\, & \bullet & \, & \, & \, & \, & \, & \, \\
	\, & \, & \, & \, & \bullet & \, & \, & \, \\
	\, & \, & \, & \, & \, & \, & \ast & \, \\
	\, & \, & \bullet & \, & \, & \, & \, & \, \\
	\, & \, & \, & \, & \, & \bullet & \, & \, \\
	\, & \, & \, & \ast & \, & \, & \, & \, \\
	\, & \, & \, & \, & \, & \, & \, & \ast \\
	\bullet & \, & \, & \, & \, & \, & \, & \, \\
	\end{ytableau}
	\qquad
	M_\sigma =
	\begin{ytableau}
	\, & \ast & \, \\
	\ast & \, & \, \\
	\, & \, & \ast \\
	\end{ytableau}
	\]
	\caption{
		The permutation matrices associated with $\pi = [1, 8, 5, 3, 7, 4, 6, 2]$
		and $\sigma = 231$. Here we see that $\pi$
		contains the pattern $\sigma$ (there are other instances of this pattern in $\pi$).
	}\label{fig:permPatternContainment}
\end{figure}

Patterns of length $3$ is the first non-trivial case to consider and we use a one-line notation without the parenthesis to represent them.
In \cref{fig:132Avoiding}, we show the structure of $132$-avoiding permutations.
Any permutation of length $n$, avoiding the pattern $132$, must be of the form
\[
  \pi = [\alpha_1, \dotsc, \alpha_j, \; n, \; \beta_1,\dotsc,\beta_{n-1-j}],
\]
where $\min(\alpha_1,\dotsc,\alpha_j) > \max(\beta_1,\dotsc,\beta_{n-1-j})$.
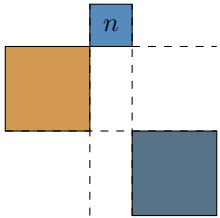
\begin{figure}[!ht]
	\centering
	\begin{tikzpicture}[scale=1.6]
	\draw[black,fill=pOrange] (0,2) rectangle (2,4);
	\draw[black,fill=pBlue] (2,4) rectangle (3,5);
	\draw[black,fill=pAlgae] (3,0) rectangle (5,2);
	\draw[black, dashed] (0,2) -- (5,2);
	\draw[black, dashed] (0,4) -- (5,4);
	\draw[black, dashed] (2,0) -- (2,5);
	\draw[black, dashed] (3,0) -- (3,5);
	\node at (2.5,4.5) {$n$};
	\end{tikzpicture}
	\caption{The structure of a permutation matrix avoiding the pattern $132$.
		The larger regions on the left and right are also $132$-avoiding,
		which explains the ``Catalan structure''.
	}\label{fig:132Avoiding}
\end{figure}
Permutations avoiding one pattern of length 3, have been studied extensively,
and it is well-known (first proved by D.~Knuth \cite{Knuth1997ArtOfProgramming,Knuth1998ArtOfProgramming})
that for all $n\geq 0$,
\[
 |\symS_{123}(n)|=|\symS_{132}(n)|=|\symS_{213}(n)|=|\symS_{231}(n)|=|\symS_{312}(n)|=|\symS_{321}(n)| = \frac{1}{n+1}\binom{2n}{n}.
\]
For patterns $\sigma$ of length 3, the inverse map and the $\revflip$ map
are bijections between some of the sets $\symS_{\sigma}(n)$, see~\cref{fig:length3Bijections}.
\begin{figure}[!ht]
	\centering
\begin{tikzpicture}[xscale=4.8,yscale=4.5,baseline=40pt,
circ/.style={circle,draw,inner sep=1pt, minimum width=4pt}
]
\node[circ] (132) at (0, 0) {$132$};
\node[circ] (213) at (-1, 0) {$213$};
\node[circ] (231) at (0, 1) {$231$};
\node[circ] (312) at (1, 1) {$312$};
\node[circ] (321) at (2, 1) {$321$};
\node[circ] (123) at (-2, 0) {$123$};
\draw[black,thick,<->] (132)--(213) node[midway,above] {$\revflip$};
\path (213) edge [loop below] node {inverse} (213);
\path (132) edge [loop below] node {inverse} (132);
\draw[black,thick,<->] (231)--(312) node[midway,above] {$\revflip$};
\draw[black,thick,<->] (231)--(312) node[midway,below] {inverse};
\draw[black,thin,dashed,<->] (231)--(132) node[midway,right] {reverse};
\path (123) edge [loop above] node {$\revflip$} (123);
\path (321) edge [loop below] node {$\revflip$} (321);
\end{tikzpicture}
	\caption{Bijections between sets of permutations avoiding a pattern of length $3$,
	see \cite{SimionSchmidt1985} for details.
	}\label{fig:length3Bijections}
\end{figure}
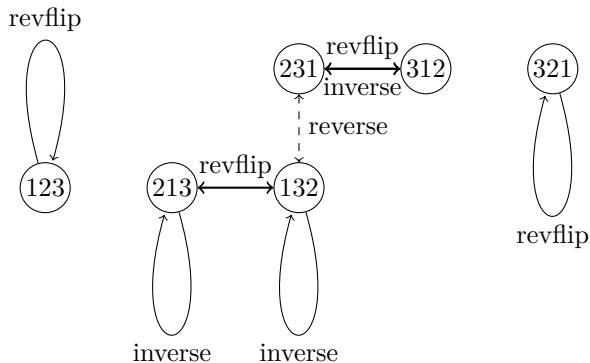

\section{Fuss--Catalan numbers and Raney numbers}

The main result in this section is \cref{prop:generalFormula},
where we give a closed-form formula for
numbers $a_k(n)$ (defined in \eqref{eq:akrecursion} below)
which generalize the Fuss--Catalan numbers. The recurrence defining the $a_k(n)$
shows up in several places in the later sections.
\medskip

The \defin{Fuss--Catalan numbers} $C_n^p$ generalize the classical Catalan numbers.
We set
\begin{equation}
\defin{C_n^p} \coloneqq \frac{1}{pn+1} \binom{pn+1}{n} = \frac{1}{(p-1)n+1} \binom{pn}{n}.
\end{equation}
Note that for $p=2$, we recover the classical Catalan numbers, see \cref{tab:fussCatalan}.
There is an extensive literature on Catalan numbers, and we refer the reader to R.~Stanley's
big collection of Catalan objects and information~\cite{StanleyCatalan}.
If we set
\[
\defin{F_p(z)} \coloneqq \sum_{n \geq 0} \frac{1}{pn+1} \binom{pn+1}{n} z^n,
\]
then Lambert~\cite{Lambert1758,Lambert1770}, (see \cite[Eq. 7.68]{GrahamKnuthPatashnik1994} for a proof),
proved that $F_p(z) = 1 + z(F_p(z))^p$.
It follows that
\begin{equation}\label{eq:lambert}
C_n^p = \sum_{\alpha} C_{\alpha_1}^p \cdot C_{\alpha_2}^p \cdot \dotsm \cdot C_{\alpha_{p}}^p, \qquad
C_0^p = 1
\end{equation}
where the sum ranges over all $\alpha \in \setN^{p}_{\geq 0}$ where the sum of the entries in $\alpha$
equals $n-1$. Note that this relation uniquely defines the sequence $\{ C_n^p \}_{n\geq 0}$.

The Fuss--Catalan numbers can be modeled as the number of
non-crossing partitions of certain polygons
using $(p+1)$-gons, see \cite[p. 78]{BrakMahony2019x}.
Another interpretation using lattice paths was mentioned in the introduction.

\begin{table}[!ht]
\begin{tabular}{p{0.5cm}rrrrrrrr}
\toprule
\; &1&2&3&4 &5&6&7&8 \\
\midrule
$C_n^1$& 1 & 1 & 1 & 1 & 1 & 1 & 1 & 1 \\
$C_n^2$& 1 & 2 & 5 & 14 & 42 & 132 & 429 & 1430 \\
$C_n^3$& 1 & 3 & 12 & 55 & 273 & 1428 & 7752 & 43263 \\
$C_n^4$& 1 & 4 & 22 & 140 & 969 & 7084 & 53820 & 420732 \\
$C_n^5$& 1 & 5 & 35 & 285 & 2530 & 23751 & 231880 & 2330445   \\
\end{tabular}
\caption{The values of $C_n^p$, with $n=1,2,\dotsc,8$ and $p=1,2,\dotsc,5$.}\label{tab:fussCatalan}
\end{table}

The \defin{Raney numbers} (introduced in \cite{Raney1960}) may be defined as
\[
 \defin{A_m(p,r)} \coloneqq \frac{r}{mp+r} \binom{mp+r}{m},
\]
and are sometimes called the \defin{two-parameter Fuss–-Catalan numbers}.
Note that $C_n^p = A_n(p,1)$, so the Raney numbers contain the Fuss--Catalan numbers as a sub-family.

\begin{proposition}\label{prop:generalFormula}
Fix $k \in \setN$ and let the sequence $\{a_k(n)\}_{n=0}^\infty$
be defined by the recursion
\begin{equation}\label{eq:akrecursion}
 a_{k}(n+1) = \sum_{\substack{0\leq i \leq n \\ k \mid i}} a_{k}(i) a_{k}(n-i),\qquad a_k(0)\coloneqq 1.
\end{equation}
The closed-form expression for $a_k(n)$ is then given by
\begin{equation}\label{eq:closedForm}
 a_{k}(km+j) = \frac{j+1}{km+j+1} \binom{ (k+1)m + j }{km+j}
\end{equation}
where we write $n=km+j$ with $0 \leq j < k$.
In particular,
\begin{equation}\label{cor:akGivesFussCat}
a_k(km+j) = A_m(k+1,j+1)  \text{ and } a_k(km) = C^{k+1}_m
\end{equation}
so the $a_{k}(n)$ form a sub-family of the Raney numbers, which includes the Fuss--Catalan numbers.
\end{proposition}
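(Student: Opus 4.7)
The plan is to verify that the candidate formula $\tilde a_k(km+j) := \frac{j+1}{km+j+1} \binom{(k+1)m+j}{km+j}$ satisfies the recursion \eqref{eq:akrecursion} and the initial condition $a_k(0)=1$; since these uniquely determine the sequence $\{a_k(n)\}_{n\ge 0}$, this yields $a_k = \tilde a_k$ and hence both \eqref{eq:closedForm} and the identifications in \eqref{cor:akGivesFussCat}.

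First I translate \eqref{eq:akrecursion} into a functional equation. Setting
\[
A(z) := \sum_{n \ge 0} a_k(n)\, z^n \qquad \text{and} \qquad B(z) := \sum_{m \ge 0} a_k(km)\, z^{km},
\]
the divisibility constraint $k \mid i$ in the convolution picks out exactly the terms of $B$, so the recursion is equivalent to the functional equation $A(z) = 1 + z\, B(z)\, A(z)$.

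Next, to show that the proposed generating function $\tilde A$ satisfies the same equation, I split $\tilde A$ by residue class mod $k$ and invoke the classical Raney generating function identity
\[
\sum_{m \ge 0} A_m(p,r)\, w^m = F_p(w)^r,
\]
which follows from Lagrange inversion applied to the Lambert equation $F_p = 1 + w F_p^p$ already used in the paper. With $G := F_{k+1}(z^k)$, this gives $\sum_{m \ge 0} A_m(k+1,\, j+1)\, z^{km} = G^{j+1}$, so
\[
\tilde A(z) = \sum_{j=0}^{k-1} z^j\, G^{j+1} = G \cdot \frac{1 - (zG)^k}{1 - zG} = \frac{G - z^k G^{k+1}}{1 - zG} = \frac{1}{1 - zG},
\]
where the last equality uses the Lambert equation $G = 1 + z^k G^{k+1}$ to collapse the numerator. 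Since the $j=0$ case of the closed form gives $\tilde a_k(km) = C_m^{k+1}$, I have $\tilde B(z) = \sum_m C_m^{k+1} z^{km} = G$, and the identity $\tilde A(z)(1-zG) = 1$ rearranges to $\tilde A(z) = 1 + z\tilde B(z) \tilde A(z)$, matching the functional equation for $A$.

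The main obstacle is the Raney identity $\sum_m A_m(p,r) w^m = F_p(w)^r$: it is classical but not explicitly stated in the paper, so the write-up should either cite a standard reference or sketch a short Lagrange-inversion derivation directly from the Lambert equation used in \eqref{eq:lambert}. Everything else is routine algebraic manipulation with $G$, and the uniqueness of sequences satisfying \eqref{eq:akrecursion} closes the argument.
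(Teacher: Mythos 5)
Your proof is correct, but it runs in the opposite direction from the paper's. The paper works \emph{forward}: it derives the functional equations $B_{k,j}(t)=A_k(t)^{j+1}$ and $A_k(t)=1+tA_k(t)^{k+1}$ directly from the recursion, and then extracts coefficients of $A_k^{j+1}$ by a chain of fractional-power substitutions culminating in a Lagrange inversion with \emph{rational} exponents (which is why the paper needs Appendix~\ref{apx:lagrange}). You instead \emph{verify} that the candidate closed form satisfies the recursion: you encode the recursion as the single equation $A=1+zBA$ with $B$ the $k$-section of $A$, invoke the classical Raney identity $\sum_m A_m(p,r)w^m=F_p(w)^r$, and collapse $\sum_{j=0}^{k-1}z^jG^{j+1}$ to $1/(1-zG)$ via the Lambert equation $G=1+z^kG^{k+1}$; uniqueness of the solution to the recursion then finishes the argument. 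The two are essentially dual — the paper's relation $B_{k,j}=A_k^{j+1}$ is exactly the sectioned form of the Raney identity you cite — but your route has the concrete advantage of needing only the standard integer-exponent Lagrange inversion (to establish $F_p^r$'s coefficients, which is Eq.~(7.69) in the same reference \cite{GrahamKnuthPatashnik1994} the paper already cites for Lambert's equation), so the rational-exponent appendix becomes unnecessary. The one item you correctly flag as outstanding — supplying a citation or short derivation of the Raney identity — is genuinely needed for completeness, but it is a standard fact and poses no obstacle.
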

\begin{proof}
Let us first set
\[
  A_k(t)     \coloneqq \sum_{m\geq 0} a_k(km) t^m \text{ and }
  B_{k,j}(t) \coloneqq \sum_{m\geq 0} a_k(km+j) t^m \text{ for } 0 \leq j \leq k.
\]
Our first goal is to obtain functional equations
for the $B_{k,j}(t)$. We shall first prove that
\begin{equation}\label{eq:functionEquations}
 B_{k,j}(t) = (A_k(t))^{j+1} 
 \qquad \text{ and } \qquad
 A_k(t) = 1 + t \cdot (A_k(t))^{k+1}.
\end{equation}
The recursion 
in \eqref{eq:akrecursion} can be rewritten
as
\[
  \sum_{m\geq 0}  a_k(km+j)t^{m} = \sum_{m\geq 0} t^{m} \sum_{i} a_k(ki) \cdot a_k(km+j-1-ki),
\]
for a fixed $j$ such that $1 \leq j \leq k$, by first substituting 
$n \mapsto km+j-1$ and next multiply by  $t^{m}$ in both sides then sum over $m$.
We then have that
\begin{align*}
B_{k,j}(t) = \sum_{m\geq 0} \sum_{i} a_k(ki) t^i \cdot  a_k(km-ki+j-1) t^{m-i}
= A_k(t) \cdot B_{k,j-1}(t).
\end{align*}
A simple inductive argument over $j$, concludes the proof
and the first set of relations in \eqref{eq:functionEquations} is established.
Now, on one hand, we have that
$ B_{k,k}(t) = (A_k(t))^{k+1}$.
On the other hand, the definition of the $B_{k,j}$
tells us that $1 + t B_{k,k}(t) = A_k(t)$, so 
\[
 A_k(t) = 1 + t (A_k(t))^{k+1},
\]
which is the second relation in \eqref{eq:functionEquations}.
\bigskip

It now remains to obtain closed formulas for the coefficients of $B_{k,j}(t)$.
Starting from $ A_k = 1 + t A_k^{k+1}$, we
raise both sides to the $(j+1)^\thsup$ power, and get
\begin{align*}
 A_k^{j+1} &= \left( 1+ t A_k^{k+1} \right)^{j+1} \\
 A_k^{j+1} &= \left( 1+ t (A_k^{j+1})^{\frac{k+1}{j+1}} \right)^{j+1} \\
 B_{k,j} &= \left( 1+ t (B_{k,j})^{\frac{k+1}{j+1}} \right)^{j+1}.
\end{align*}
After multiplying by $t^{\frac{j+1}{k+1}}$, we get
\[
 t^{\frac{j+1}{k+1}} B_{k,j} =
 t^{\frac{j+1}{k+1}} \left( 1+ (t^{\frac{j+1}{k+1}} B_{k,j})^{\frac{k+1}{j+1}} \right)^{j+1}.
\]
We now do the substitution $t \mapsto s^{\frac{k+1}{j+1}}$, so that
\[
 s B_{k,j}(s^{\frac{k+1}{j+1}}) = s \left( 1+ (s B_{k,j}(s^{\frac{k+1}{j+1}}))^{\frac{k+1}{j+1}} \right)^{j+1}.
\]
Finally, we set $F(s) \coloneqq s B_{k,j}(s^{\frac{k+1}{j+1}})$,
and we end up with the functional equation
\[
 F(s) = s \left( 1+ (F(s))^{\frac{k+1}{j+1}} \right)^{j+1}.
\]
We now use Lagrange inversion (see \cite[Sec. 5.1]{Wilf1994}),
where we use a slight extension allowing for rational exponents.
A proof is included further down in Appendix~\ref{apx:lagrange}.
In our case, we have that
\begin{equation}\label{eq:lagrangeInv}
  F(s) \Big\vert_{s^r} = \frac{1}{r} \phi(x)^r \Big\vert_{x^{r-1}}
\text{ where } \phi(x) = \left(1+x^{\frac{k+1}{j+1}} \right)^{j+1}.
\end{equation}
Now,
\[
B_{k,j}(s) \Big \vert_{s^n} =  s^{-\frac{j+1}{k+1} } F\left(s^{\frac{j+1}{k+1}}\right) \Big \vert_{s^{n}}
=  F(s) \Big \vert_{s^{n(k+1)/(j+1)+1}}.
\]
By using the relation in \eqref{eq:lagrangeInv},
we then have that
\begin{align*}
 B_{k,j}(s) \Big \vert_{s^m} &=
 \frac{1}{m \frac{k+1}{j+1} +1 }
 \left(1+x^{\frac{k+1}{j+1}} \right)^{m(k+1)+(j+1)} \Big \vert_{x^{m(k+1)/(j+1)}} \\
 &=
 \frac{j+1}{m(k+1)+ j+1}
 \left(1+x \right)^{n(k+1)+(j+1)} \Big \vert_{x^{m}} \\
 &=
 \frac{j+1}{(k+1)m+ j+1}
 \binom{ (k+1)m+(j+1) }{ m }.
\end{align*}
It is now straightforward to verify that this expression
agrees with the one given in \eqref{eq:closedForm} and finally to verify \eqref{cor:akGivesFussCat}.
\end{proof}

\begin{proposition}\label{prop:anotherrecur}
For a fixed $r$ in $\{0,1,\dotsc,k-1\}$ and $m\geq 1$, we have 
\begin{equation}\label{eq:anotherrecur}
	a_k(km)=\sum_{j=0}^{m-1} a_k(kj+r)a_k(k(m-j)-r-1),
\end{equation}
where $a_k(n)$ is the sequence in \cref{prop:generalFormula}.
\end{proposition}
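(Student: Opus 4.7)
The plan is to deduce this identity from the functional equations for $A_k$ and $B_{k,j}$ that were already set up in the proof of \cref{prop:generalFormula}. Recall that
\[
  A_k(t) = \sum_{m\geq 0} a_k(km) t^m, \qquad B_{k,j}(t) = \sum_{m\geq 0} a_k(km+j) t^m,
\]
and that these satisfy
\[
  B_{k,j}(t) = A_k(t)^{j+1}, \qquad A_k(t) = 1 + t \cdot A_k(t)^{k+1}.
\]
Note in particular that when $r=0$, the claim \eqref{eq:anotherrecur} is just the defining recursion \eqref{eq:akrecursion} applied at $n+1=km$, where the condition $k\mid i$ forces $i=kj$ with $j\in\{0,1,\dots,m-1\}$.

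The key observation for general $r$ is trivial: since $(r+1) + (k-r) = k+1$ and $k-r-1 \in \{0,1,\dots,k-1\}$, we may split
\[
  A_k(t)^{k+1} \;=\; A_k(t)^{r+1}\cdot A_k(t)^{k-r} \;=\; B_{k,r}(t)\cdot B_{k,k-r-1}(t).
\]
I would then extract the coefficient of $t^{m-1}$ from both sides. On the left, the functional equation $A_k(t) - 1 = t\cdot A_k(t)^{k+1}$ gives
\[
  [t^{m-1}]\, A_k(t)^{k+1} \;=\; [t^m]\bigl(A_k(t)-1\bigr) \;=\; a_k(km)
\]
for $m\geq 1$. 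On the right, the Cauchy product yields
\[
  [t^{m-1}]\, B_{k,r}(t)\, B_{k,k-r-1}(t) \;=\; \sum_{j=0}^{m-1} a_k(kj+r)\, a_k\bigl(k(m-1-j)+(k-r-1)\bigr),
\]
and the second factor simplifies to $a_k(k(m-j)-r-1)$. Equating the two coefficients gives exactly \eqref{eq:anotherrecur}.

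There is no real obstacle here; the proof is a one-line generating-function manipulation once the relations from \cref{prop:generalFormula} are in hand. The only mild bookkeeping is to verify the index ranges (in particular that $k-r-1 \in \{0,\dots,k-1\}$ so that $B_{k,k-r-1}$ is one of the series already defined), which is immediate from $r \in \{0,1,\dots,k-1\}$.
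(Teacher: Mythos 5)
Your proof is correct and follows essentially the same route as the paper: both use the functional equations from \cref{prop:generalFormula} to write $A_k(t)-1 = t\,A_k(t)^{k+1} = t\,B_{k,r}(t)\,B_{k,k-1-r}(t)$ and then compare coefficients via the Cauchy product. The only difference is cosmetic (you extract $[t^{m-1}]$ of $A_k^{k+1}$ rather than $[t^m]$ of $1+tB_{k,r}B_{k,k-1-r}$), so there is nothing to add.
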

\begin{proof}
	Recall the notation in \cref{prop:generalFormula}, 
	where
	\[
	A_k(t)     \coloneqq \sum_{m\geq 0} a_k(km) t^m \text{ and }
	B_{k,r}(t) \coloneqq \sum_{m\geq 0} a_k(km+r) t^m.
	\]
	The second equation in \eqref{eq:functionEquations} implies that
	\[
	A_k(t)=1+tB_{k,r}(t)B_{k,k-1-r}(t).
	\]
	 Thus,
	\begin{align*}
	\sum_{m\geq 0}a_k(km)t^m&=1+t\left( \sum_{m\geq 0}a_k(km+r)t^m\right)\left( \sum_{m\geq 0}a_k(km+k-1-r)t^m\right)\\
	&=1+\sum_{m\geq 1}t^m\sum_{j=0}^{m-1}a_k(kj+r)a_k(k(m-j)-1-r).
	\end{align*}
	Comparing coefficients of $t^m$, we arrive at \eqref{eq:anotherrecur}.
\end{proof}

\section{Mod-\texorpdfstring{$k$}{k}-alternating permutations and pattern avoidance}\label{sec:modKPattern}

\subsection{Mod-\texorpdfstring{$k$}{k}-alternating permutations}

Let $\PAP(n)$ denote the set of \defin{parity alternating permutations} (\defin{PAP}),
starting with an odd entry and let $\pap(n)$ be the cardinality of $\PAP(n)$.
Moreover, we let $\PAP_\sigma(n)$ be the set of PAPs avoiding the permutation pattern $\sigma$,
and $\PAP_{\sigma,\tau}(n)$ be set of PAPs avoiding both $\sigma$ and $\tau$.

Similarly, we let $\PAP^*(n)$ denote the set of parity-alternating permutations starting with
an \emph{even} entry, and we define $\PAP^*_\sigma(n)$ and $\PAP^*_{\sigma,\tau}(n)$ analogously.
The following lemma is straightforward to verify.
\begin{lemma}\label{lem:revBij}
For $m \geq 0$,
\begin{align*}
&\rev: \PAP_{132}(2m)  \to \PAP^*_{231}(2m) \\
&\rev: \PAP_{231}(2m)  \to \PAP^*_{132}(2m) \\
&\rev: \PAP_{132}(2m+1)  \to \PAP_{231}(2m+1) \\
&\rev: \PAP_{231}(2m+1)  \to \PAP_{132}(2m+1)
\end{align*}
are bijections. 
\end{lemma}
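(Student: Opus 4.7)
The plan is to decompose the statement into two independent observations about $\rev$: its effect on pattern containment, and its effect on the parity-alternating structure. Since $\rev$ is clearly an involution on $\symS_n$, once we verify that each listed map is well-defined (i.e.\ lands in the claimed codomain), bijectivity follows automatically from applying $\rev$ a second time, giving a two-sided inverse that lands back in the domain.

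First I would record the pattern-avoidance behaviour. For any subsequence $\pi_{i_1},\pi_{i_2},\pi_{i_3}$ of $\pi$ with $i_1<i_2<i_3$, the corresponding subsequence in $\rev(\pi)$ reads the values in the opposite order at positions $n+1-i_3<n+1-i_2<n+1-i_1$. An instance of $132$ in $\pi$ (i.e.\ values in the relative order small--large--middle) therefore becomes an instance of $231$ in $\rev(\pi)$ (relative order middle--large--small), and vice versa. Consequently $\rev$ restricts to a bijection $\symS_{132}(n)\to \symS_{231}(n)$ for every $n$.

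Next I would analyse the parity of $\rev(\pi)_i=\pi_{n+1-i}$. When $n=2m$ is even, $n+1-i$ and $i$ have opposite parities, so the value at position $i$ in $\rev(\pi)$ has the opposite parity from the position, which is exactly the PAP${}^*$ condition (even value in an odd position, etc.). When $n=2m+1$ is odd, $n+1-i$ and $i$ have the same parity, so the PAP property is preserved. Thus for even length $\rev$ swaps PAP and PAP${}^*$, while for odd length $\rev$ sends PAP to PAP (and PAP${}^*$ to PAP${}^*$).

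Combining the two observations yields each of the four claimed maps: for instance, $\rev$ sends $\PAP_{132}(2m)$ into the set of $231$-avoiding permutations that are PAP${}^*$, namely $\PAP^*_{231}(2m)$, and the analogous three statements follow by swapping the roles of $132\leftrightarrow 231$ and/or using odd length. Bijectivity is immediate because $\rev\circ\rev=\id$ and the same argument shows the inverse map lands in the stated domain. There is no real obstacle here; the only care needed is the parity bookkeeping distinguishing the even-length and odd-length cases, which is precisely why the first two items in the lemma cross PAP with PAP${}^*$ while the last two stay within PAP.
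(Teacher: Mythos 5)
Your argument is correct and complete: the reversal of a $132$-instance is a $231$-instance (and vice versa), the parity of $n+1-i$ versus $i$ correctly explains why even length swaps $\PAP$ with $\PAP^*$ while odd length preserves $\PAP$, and $\rev\circ\rev=\id$ gives bijectivity. The paper states this lemma as ``straightforward to verify'' and offers only an example, so your write-up simply supplies the standard verification the authors omitted.
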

For instance, $34561278$ is mapped to $87216543$ and $5234167$ 
is mapped to $7614325$ under the reverse map, 
where $34561278$ and $5234167$ are $132$-avoiding 
and $87216543$ and $7614325$ are $231$-avoiding.

\begin{definition}
	Given $k\geq 2$, a \defin{mod-$k$-alternating permutation} of size $n$
	is a permutation $\pi \in \symS_n$ such that
	\[
	\pi(i)\equiv i \mod k \qquad \text{ for all $i=1,2,\dotsc,n$}.
	\]
Let $\defin{\RRP(n,k)}$ be the set of such mod-$k$-alternating permutations and we set $\defin{\RRP_\sigma(n,k)}\coloneqq\RRP(n,k)\cap\symS_\sigma(n)$.
Note that $\RRP(n,k)$ is a subgroup of $\symS_n$, so that it is closed under
taking inverses.

For $k=2$ we recover the classical parity-alternating permutations starting with an odd integer.
Note that
\begin{align*}
\defin{\rrp(n,k)}\coloneqq|\RRP(n,k)| = \left(\Big\lceil\frac{n}{k}\Big\rceil!\right)^j\left(\Big\lfloor\frac{n}{k}\Big\rfloor!\right)^{k-j},
\end{align*}
where $j$ is the remainder when $n$ is divided by $k$, while we let $\defin{\rrp_\sigma(n,k)}\coloneqq|\RRP_\sigma(n,k)|$. 
\end{definition}

\begin{example}
For example, $\RRP(7,3)$ has $(3!)^1 \cdot (2!)^2 = 24$ elements:
\begin{equation*}
\begin{matrix}
 [1,2,3,4,5,6,7] & [1,2,6,4,5,3,7] & [1,5,3,4,2,6,7] & [1,5,6,4,2,3,7] & [1,2,3,7,5,6,4]\\
  [1,2,6,7,5,3,4] &  [1,5,3,7,2,6,4] & [1,5,6,7,2,3,4] & [4,2,3,1,5,6,7] & [4,2,6,1,5,3,7]\\ 
  [4,5,3,1,2,6,7] & [4,5,6,1,2,3,7] & [4,2,3,7,5,6,1] & [4,2,6,7,5,3,1] & [4,5,3,7,2,6,1]\\ 
  [4,5,6,7,2,3,1] & [7,2,3,1,5,6,4] & [7,2,6,1,5,3,4] & [7,5,3,1,2,6,4] & [7,5,6,1,2,3,4]\\
   [7,2,3,4,5,6,1]& [7,2,6,4,5,3,1] & [7,5,3,4,2,6,1] & [7,5,6,4,2,3,1] & \\
\end{matrix}
\end{equation*}
\end{example}

We shall also introduce some notation for mod-$k$-alternating permutations
starting with the remainder $r$ mod $k$.
\begin{definition}
Let \defin{$\RRP(n,k,r)$} denote the set of permutations of length $n$
which satisfy
\begin{equation}\label{eq:modKwithRemainder}
  \pi(i) \equiv (r+i-1)\mod k \qquad\text{ for all $1 \leq i \leq n$}.
\end{equation}
Moreover, we set $\defin{\RRP_\sigma(n,k,r)} \coloneqq \RRP(n,k,r) \cap \symS_\sigma(n)$, $\defin{\rrp(n,k,r)}\coloneqq |\RRP(n,k,r)|$, and $\defin{\rrp_\sigma(n,k,r)}\coloneqq |\RRP_\sigma(n,k,r)|$.
\end{definition}

\begin{lemma}\label{lem:nonEmptyCases}
We have that $\rrp(n,k,r)=0$ unless $r=1$, or $n=mk$ for some integer $m$.
\end{lemma}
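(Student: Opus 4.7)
The plan is to exploit the bijectivity of $\pi$ by counting residue classes mod $k$ on both sides of the defining congruence \eqref{eq:modKwithRemainder}. Since $\pi$ is a permutation of $[n]$, for each residue class $c$ modulo $k$, the cardinality of $\{i \in [n] : \pi(i) \equiv c \pmod{k}\}$ must equal the cardinality of $\{v \in [n] : v \equiv c \pmod{k}\}$. Denoting the latter by $N_c$, the condition \eqref{eq:modKwithRemainder} says that $\pi(i) \equiv c \pmod{k}$ precisely when $i \equiv c - r + 1 \pmod{k}$, so the necessary (and obvious) obstruction is
\[
N_c = N_{c-r+1} \qquad \text{for every } c \in \mathbb{Z}/k\mathbb{Z}.
\]

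Next I would write $n = km + j$ with $0 \leq j < k$ and compute these multiplicities explicitly (using residue representatives in $\{1, 2, \ldots, k\}$). One gets $N_c = m+1$ for $c \in \{1, 2, \ldots, j\}$ and $N_c = m$ for $c \in \{j+1, \ldots, k\}$. If $j = 0$, then $N_c = m$ for every $c$ and the obstruction is vacuous, which accounts for the case $n = mk$. If $j \geq 1$, the requirement becomes that the shift by $r-1$ on $\mathbb{Z}/k\mathbb{Z}$ preserves the proper nonempty subset $\{1, 2, \ldots, j\}$.

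The final step is the elementary observation that, because $\{1, \ldots, j\}$ is a nonempty proper subset of $\mathbb{Z}/k\mathbb{Z}$ (using $1 \leq j < k$), the only shift preserving it is the identity. One can argue this directly by cases on whether the shifted interval $\{1+s, 2+s, \ldots, j+s\}$ wraps around: if it does not wrap, equality with $\{1,\ldots,j\}$ forces $s = 0$; if it does wrap, the set would have to contain the element $k$, contradicting $k \notin \{1,\ldots,j\}$. Hence $r - 1 \equiv 0 \pmod{k}$, so $r = 1$ under the convention $r \in \{1, \ldots, k\}$.

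There is no real obstacle here; the only thing to be slightly careful about is the bookkeeping of representatives, i.e., making sure the multiset count $N_c$ is expressed in the same system of representatives as the one used for the shift. Everything else is a one-line pigeonhole argument.
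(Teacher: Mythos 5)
Your proof is correct and rests on the same idea as the paper's: since $\pi$ is a bijection, the multiset of residues of positions (shifted by $r-1$) must match the multiset of residues of values, and when $k\nmid n$ these multiplicities are not invariant under a nontrivial shift. The paper extracts the contradiction slightly more locally, by comparing only the two adjacent residue classes $r$ and $r-1$, but this is just a different bookkeeping of the same counting argument.
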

\begin{proof}
Suppose $2 \leq r \leq k$ and $\pi \in \RRP(n,k,r)$.
By \eqref{eq:modKwithRemainder}, the number of \emph{positions} in $\pi$,
where the entry has remainder $r$, always exceed the number of entries with remainder $r-1$,
unless $n$ is a multiple of $k$. This is due to the fact that the first position have remainder $r$,
so that the remainders must form the pattern
\[
  [r,r+1,r+2,\dotsc,0,1,\dotsc,r-1,r,\dotsc].
\]	
Now, the number of \emph{entries} with index equal to $r$ mod $k$,
is never larger than the number of entries with index equal to $r-1$ mod $k$,
since the entries are from $[n]$.
Since these two counts must agree for $\pi$, the statement follows.
\end{proof}

\begin{lemma}
The cardinality of $\RRP(nk,k,r)$ is $(n!)^k$.
\end{lemma}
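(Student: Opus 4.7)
The plan is to count by decomposing positions and values according to their residues mod $k$. Write the length as $N = nk$, and for each residue $j \in \{0, 1, \dotsc, k-1\}$ let $P_j \subseteq [N]$ be the set of positions $i$ with $i \equiv j \pmod{k}$, and let $V_j \subseteq [N]$ be the set of values $v$ with $v \equiv j \pmod{k}$. Since $N$ is a multiple of $k$, each $P_j$ has exactly $n$ elements, and likewise each $V_j$ has exactly $n$ elements.

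Next I would unpack the defining condition \eqref{eq:modKwithRemainder}. A permutation $\pi \in \RRP(N,k,r)$ is precisely a bijection $\pi : [N] \to [N]$ such that for each $j$, the image $\pi(P_j)$ is contained in $V_{(r+j-1) \bmod k}$. Because the $P_j$ partition $[N]$ into $k$ blocks of size $n$, and the $V_{(r+j-1) \bmod k}$ also partition $[N]$ into $k$ blocks of size $n$, the inclusion $\pi(P_j) \subseteq V_{(r+j-1) \bmod k}$ forces equality (both sides have cardinality $n$, and $\pi$ is a bijection). Therefore $\pi$ is completely determined by the collection of $k$ bijections $\pi|_{P_j} : P_j \to V_{(r+j-1) \bmod k}$, one for each residue class.

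Conversely, any choice of such a $k$-tuple of bijections between the blocks assembles into a well-defined permutation in $\RRP(N,k,r)$. Since each block has size $n$, there are $n!$ choices for each of the $k$ bijections, independently, so
\[
  |\RRP(nk,k,r)| = (n!)^k.
\]
There is essentially no obstacle here beyond bookkeeping: the key observation is simply that when the length is a multiple of $k$, the residue-class partitions of positions and of values match up into blocks of equal size, so the constraint \eqref{eq:modKwithRemainder} decouples into $k$ independent bijection choices.
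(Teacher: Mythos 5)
Your argument is correct and is exactly the routine counting the paper has in mind when it dismisses this lemma as ``straightforward'': since $nk$ is a multiple of $k$, the residue classes of positions and of values each partition $[nk]$ into $k$ blocks of size $n$, and condition \eqref{eq:modKwithRemainder} forces each position-block to map bijectively onto a prescribed value-block, giving $(n!)^k$. Nothing to add.
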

\begin{proof}
This is straightforward.
\end{proof}

\begin{lemma}\label{lem:modkAltRevFlip}
We have that $\rrp_{\sigma}(n,k) = \rrp_{\revflip(\sigma)}(n,k)$.
Moreover,
\begin{equation}\label{eq:revflipRelation}
\rrp_{\sigma}(n,k,r) = \rrp_{\revflip(\sigma)}(n,k,2-r).
\end{equation}
In particular, $\rrp_{132}(n,k)$ and $\rrp_{213}(n,k)$
have the same cardinality.
\end{lemma}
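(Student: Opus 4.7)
The plan is to construct an explicit bijection using the $\revflip$ involution. The main step is to verify that $\revflip$ preserves the mod-$k$-alternating condition (with the appropriate shift in remainder), and then invoke the standard compatibility of $\revflip$ with pattern containment.

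First, I would check how $\revflip$ interacts with the remainder condition in \eqref{eq:modKwithRemainder}. Suppose $\pi \in \RRP(n,k,r)$, so that $\pi(i) \equiv r+i-1 \pmod k$ for all $i$. Writing $\tau = \revflip(\pi)$, we have $\tau(i) = n+1-\pi(n+1-i)$. A short computation gives
\[
\tau(i) \equiv n+1 - (r + (n+1-i) - 1) \equiv (2-r) + i - 1 \pmod k,
\]
so $\tau \in \RRP(n,k,2-r)$, with $2-r$ interpreted modulo $k$. Since $\revflip$ is an involution, this already produces a bijection $\RRP(n,k,r) \to \RRP(n,k,2-r)$.

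Second, I would invoke the standard fact (easily seen from the permutation matrix picture in \cref{fig:permMatrix}, which is rotated $180^\circ$ by $\revflip$) that $\pi$ contains $\sigma$ if and only if $\revflip(\pi)$ contains $\revflip(\sigma)$. Combining this with the first step, the restriction of $\revflip$ is a bijection
\[
\RRP_\sigma(n,k,r) \;\longrightarrow\; \RRP_{\revflip(\sigma)}(n,k,2-r),
\]
which proves \eqref{eq:revflipRelation}. The unqualified identity $\rrp_\sigma(n,k) = \rrp_{\revflip(\sigma)}(n,k)$ is then the special case $r=1$, since $2-r \equiv 1 \pmod k$ in that case and $\RRP(n,k) = \RRP(n,k,1)$.

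Finally, for the ``in particular'' claim, I would just compute $\revflip(132)$: reversing gives $231$, and then flipping (subtracting from $n+1=4$) yields $213$, so $\revflip(132) = 213$ and the two counts agree. The only thing to be careful about is the modular convention for $2-r$ when $r \in \{1,\dots,k\}$, but this causes no real obstacle since the remainder parameter is only defined up to multiples of $k$ and the set $\RRP(n,k,r)$ depends only on $r \bmod k$.
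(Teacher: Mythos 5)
Your proposal is correct and follows essentially the same route as the paper: both verify that $\revflip$ carries $\RRP_\sigma(n,k,r)$ bijectively onto $\RRP_{\revflip(\sigma)}(n,k,2-r)$ and deduce the unqualified identity from the case $r=1$. Your version is in fact slightly more complete, since you check the remainder condition at every position $i$ rather than only at the first entry.
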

\begin{proof}
Note that it is enough to prove \eqref{eq:revflipRelation},
and that it suffices to show 
that $\revflip : \RRP_{\sigma}(n,k,r) \to \RRP_{\revflip(\sigma)}(n,k, 2-r )$
is a bijection.

For $\pi \in \RRP_{\sigma}(n,k,r)$,
we have that the last element in $\pi$ has remainder $r-1+n$ mod $k$.
Hence, first element of $\revflip(\pi)$ has remainder $n+1-(r-1+n) = 2-r$ mod $k$.
The remaining properties are now straightforward to verify,
that is, that $\revflip(\pi) \in \RRP_{\revflip(\sigma)}(n,k, 2-r )$.
\end{proof}

\begin{lemma}\label{lem:modkInverse}
We have that $\rrp_{\sigma}(n,k)= \rrp_{\sigma^{-1}}(n,k)$.
\end{lemma}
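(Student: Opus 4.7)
The plan is to exhibit $\pi \mapsto \pi^{-1}$ as the explicit bijection between $\RRP_{\sigma}(n,k)$ and $\RRP_{\sigma^{-1}}(n,k)$. Two ingredients are needed: first, that inversion preserves the mod-$k$-alternating property, and second, that it sends $\sigma$-containing permutations to $\sigma^{-1}$-containing permutations (and hence $\sigma$-avoiders to $\sigma^{-1}$-avoiders).

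For the first ingredient, the earlier remark that $\RRP(n,k)$ is a subgroup of $\symS_n$ already delivers closure under inversion. A direct verification also works: if $\pi(i) \equiv i \pmod{k}$ for all $i$, then applying $\pi^{-1}$ to the congruence $\pi(i) \equiv i$ and relabeling $j = \pi(i)$ gives $j \equiv \pi^{-1}(j) \pmod{k}$, so $\pi^{-1} \in \RRP(n,k)$.

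For the second ingredient, I would invoke the classical and standard fact that $\pi$ contains $\sigma$ if and only if $\pi^{-1}$ contains $\sigma^{-1}$. The cleanest way to see this is via permutation matrices: inversion corresponds to transposing $M_\pi$, which permutes the roles of rows and columns. A $k \times k$ submatrix of $M_\pi$ realizing the pattern $\sigma$ transposes to a $k \times k$ submatrix of $M_{\pi^{-1}}$ realizing $\sigma^{-1}$, and vice versa. Consequently $\pi$ avoids $\sigma$ iff $\pi^{-1}$ avoids $\sigma^{-1}$.

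Putting the two ingredients together, the map
\[
  \RRP_{\sigma}(n,k) \to \RRP_{\sigma^{-1}}(n,k), \qquad \pi \mapsto \pi^{-1}
\]
is well-defined, and it is an involutive bijection since $(\pi^{-1})^{-1} = \pi$. Taking cardinalities gives $\rrp_{\sigma}(n,k) = \rrp_{\sigma^{-1}}(n,k)$. The only mildly non-routine step is the matrix-transpose argument for pattern containment, but this is standard and essentially immediate once one thinks in terms of $M_\pi$ as displayed in \cref{fig:permMatrix}.
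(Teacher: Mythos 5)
Your proof is correct and follows essentially the same route as the paper: both reduce the claim to the classical fact that $\pi$ avoids $\sigma$ if and only if $\pi^{-1}$ avoids $\sigma^{-1}$ (which the paper cites from Simion--Schmidt and you justify by transposing $M_\pi$), combined with the closure of $\RRP(n,k)$ under inversion noted when that set was defined. The extra detail you supply for both ingredients is fine but not a different argument.
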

\begin{proof}
We have that $\pi \in \symS_\sigma(n) \iff \pi^{-1} \in \symS_{\sigma^{-1}}(n)$
(this is classical, see \cite[Lem.1]{SimionSchmidt1985})
so $\pi \in \RRP_{\sigma}(n,k)$ holds iff $\pi^{-1} \in \RRP_{\sigma^{-1}}(n,k)$.
\end{proof}

\begin{proposition}\label{prop:213Recursion}
Let $a_k(n) = \rrp_{213}(n,k) = \rrp_{132}(n,k)$.
Then we have the recursion
\begin{equation}\label{eq:213modKAlternatingRecursion}
 a_{k}(n+1) = \sum_{\substack{0\leq j \leq n \\ k \mid j}} a_{k}(j) a_{k}(n-j),\qquad a_k(0)\coloneqq 1.
\end{equation}
\end{proposition}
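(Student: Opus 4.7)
The plan is to prove the recursion by decomposing a $213$-avoiding mod-$k$-alternating permutation according to the position of the entry $1$. Recall that if $\pi \in \symS_N$ is $213$-avoiding with $\pi(p) = 1$, then the prefix $\pi_1,\dotsc,\pi_{p-1}$ consists of the top $p-1$ elements of $\{2,\dotsc,N\}$, the suffix $\pi_{p+1},\dotsc,\pi_N$ consists of $\{2,\dotsc,N-p+1\}$, and both prefix and suffix are themselves $213$-avoiding. The identity $\rrp_{213}(n,k)=\rrp_{132}(n,k)$ is immediate from \cref{lem:modkAltRevFlip}.

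Let $\pi \in \RRP_{213}(n+1,k)$ with $1$ at position $p$. The mod-$k$-alternating condition gives $1 \equiv p \pmod{k}$, so $p-1=jk$ for some $j \ge 0$. A short arithmetic check shows that the re-indexed suffix $\pi''(l) \coloneqq \pi(p+l)-1$ satisfies $\pi''(l) \equiv l \pmod{k}$, so it lies in $\RRP_{213}(n-jk,\,k,\,1)$, counted by $a_k(n-jk)$. The shifted prefix $\pi'(l)\coloneqq\pi(l)-(n-jk+1)$ is $213$-avoiding and mod-$k$-alternating with a starting residue $r_0$ that depends only on $n$ and $k$ (explicitly $r_0 \equiv -n \pmod{k}$), not on $p$. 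Summing over valid $p$,
\[
\rrp_{213}(n+1,\,k) \;=\; \sum_{j=0}^{\lfloor n/k \rfloor} \rrp_{213}(jk,\,k,\,r_0)\cdot a_k(n-jk).
\]

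The proposition now reduces to the following key lemma: \emph{for every $m \ge 0$ and every $r \in \{1,\dotsc,k\}$, $\rrp_{213}(km,\,k,\,r) = a_k(km)$}. Granting this, $\rrp_{213}(jk,\,k,\,r_0) = a_k(jk)$, and the sum becomes $\sum_j a_k(jk)\,a_k(n-jk)$, which is the claimed recursion after the substitution $i=jk$ (making $k \mid i$ automatic).

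I expect this lemma to be the main obstacle; I would prove it by induction on $m$, applying the same ``place the $1$'' decomposition to $\pi \in \RRP_{213}(km,\,k,\,r)$. The position $p$ now satisfies $p \equiv 2-r \pmod{k}$, giving exactly $m$ valid choices $p = jk + c$ for $j=0,\dotsc,m-1$ and fixed $c \in \{1,\dotsc,k\}$ with $c \equiv 2-r \pmod{k}$. A direct mod-$k$ check shows that this time \emph{both} the shifted prefix and the re-indexed suffix have starting residue $1$; hence by the inductive hypothesis they are counted by $a_k$-values, and the resulting sum takes the form
\[
\sum_{j=0}^{m-1} a_k(jk + s)\cdot a_k(k(m-j) - s - 1),
\]
where $s \coloneqq c - 1 \in \{0,\dotsc,k-1\}$. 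This equals $a_k(km)$ by \cref{prop:anotherrecur} applied with parameter $s$, closing the lemma. Apart from the mod-$k$ bookkeeping of starting residues through the two decompositions, the whole argument is routine; the substantive input is the algebraic identity in \cref{prop:anotherrecur}.
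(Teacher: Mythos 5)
Your decomposition by the position of the entry $1$ is the same one the paper uses, but you have spotted, and repaired, a genuine imprecision in the paper's own one-paragraph proof. The paper asserts that the shifted prefix $\alpha'$ lies in $\RRP_{213}(j,k)$; in fact, as you compute, it lies in $\RRP_{213}(j,k,r_0)$ with $r_0\equiv -n \pmod{k}$, and this starting residue equals $1$ only when $k\mid n+1$ (already for $k=2$, $n+1=3$ and $\pi=[3,2,1]$ one gets $\alpha'=[2,1]$, which starts with an even entry). Your key lemma, $\rrp_{213}(km,k,r)=a_k(km)$ for every $r$, is exactly the paper's \cref{prop:213kgeq3}, which the paper states and proves only \emph{after} (and using) the present proposition; your plan of proving it by the same ``place the $1$'' decomposition together with \cref{prop:anotherrecur} coincides with the paper's proof of that later result, and your residue bookkeeping (prefix and suffix both reduce to starting residue $1$, prefix length $jk+s$ with $s=c-1$ and $c\equiv 2-r\pmod{k}$) checks out. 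So your write-up is not a different route so much as the paper's two-proposition argument reassembled in the logically correct order.

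The one point you must make fully explicit is that order. \Cref{prop:anotherrecur} is a statement about the sequence \emph{defined} by the recursion \eqref{eq:akrecursion}, so before invoking it inside the key lemma you need to know that the counts $\rrp_{213}(i,k,1)$ agree with that abstractly defined sequence for all $i$ up to the relevant length --- which is precisely the recursion you are in the middle of proving. This is not circular, but it forces a single strong induction on the length $N$ carrying both statements: the recursion at length $N$ uses the key lemma only at multiples of $k$ strictly below $N$ (and needs no lemma at all when $k\mid N$, since then $r_0\equiv 1$), while the key lemma at $N=km$ uses the recursion at all lengths up to $km$ to identify the counts with $a_k$ before applying \cref{prop:anotherrecur}. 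With that interleaving spelled out, your argument is complete and, unlike the proof as printed, self-contained.
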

\begin{proof}
Consider $ \pi \in \RRP_{213}(n+1,k)$. It must be of the form
\[
  \pi = [\alpha_1,\dotsc,\alpha_{j},1,\beta_1,\dotsc,\beta_{n-j}]
\]
where we must have $\alpha_r > \beta_s$ for all relevant $r,s$,
see \cref{fig:213RRPFig}.

\begin{figure}[!ht]
	\centering
	\begin{tikzpicture}[scale=1.1]
	\draw[black,fill=pPink] (5,0) rectangle (6,1);
	\draw[black,fill=pBlue] (6,1) rectangle (9,4);
	\draw[black,fill=pOrange] (0,4) rectangle (5,9);
	
	\draw[black, dashed] (6,0) -- (6,9);
	\draw[black, dashed] (5,0) -- (5,9);
	\draw[black, dashed] (0,4) -- (9,4);
	\draw[black, dashed] (0,1) -- (9,1);

	\node at (5.5,0.5) {$1$};
	\node at (7.5,2.5) {$\beta$};
	\node at (2.5,6.5) {$\alpha$};
	\end{tikzpicture}
\caption{The structure of a $213$-avoiding permutation.}\label{fig:213RRPFig}
\end{figure}
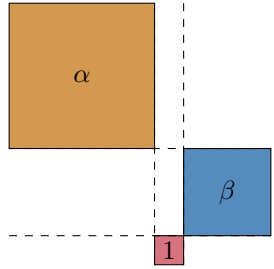
Since $1$ can only appear at some position $km+1$ for some $m \in \setN$,
we must have that $k\mid j$. Hence, setting
\begin{align*}
  \alpha' & \coloneqq [\alpha_1 - (n-j+1),\alpha_2 - (n-j+1), \dotsc,\alpha_j - (n-j+1)] \\
  \beta' & \coloneqq [\beta_1-1, \beta_2-1, \dotsc,\beta_{n-j}-1],
\end{align*}
we have that $\alpha' \in \RRP_{213}(j,k)$, $\beta' \in \RRP_{213}(n-j,k)$.
Thus we have a bijective proof of the recursion in \eqref{eq:213modKAlternatingRecursion}.
\end{proof}

We can now give two new combinatorial interpretations of the
Fuss--Catalan numbers.
\begin{corollary}\label{cor:mainFussCatalan}
Let $\sigma \in \{132,213\}$ and $n,k \geq 1$.
Let $m$ and $j$ be defined via $n=km+j$ with $0 \leq j < k$.
Then
\[
\rrp_\sigma(n,k) =  \frac{j+1}{km+j+1} \binom{ (k+1)m + j }{km+j}
\]
and in particular
\[
\rrp_\sigma(km,k) = \frac{1}{(k+1)m+1} \binom{(k+1)m+1}{m}
	= \frac{1}{km+1} \binom{(k+1)m}{m}.
\]
\end{corollary}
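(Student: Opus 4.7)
The plan is essentially to observe that this is a direct corollary of the machinery already built up in the paper. Two ingredients are in place: first, Proposition \ref{prop:213Recursion} established that the sequence $a_k(n) := \rrp_{213}(n,k)$ satisfies exactly the recurrence \eqref{eq:akrecursion}; second, Proposition \ref{prop:generalFormula} solved that recurrence in closed form to yield \eqref{eq:closedForm}. So my first step would be to invoke Proposition \ref{prop:213Recursion}, which gives $\rrp_{213}(n,k)$ as the sequence $a_k(n)$ of Proposition \ref{prop:generalFormula}. Since the recurrence together with the initial condition $a_k(0) = 1$ determines the sequence uniquely, this identifies $\rrp_{213}(n,k)$ with $a_k(n)$ for every $n \geq 0$.

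Next, I would apply Proposition \ref{prop:generalFormula} directly. Writing $n = km + j$ with $0 \leq j < k$, that proposition gives
\[
\rrp_{213}(n,k) = a_k(km+j) = \frac{j+1}{km+j+1}\binom{(k+1)m+j}{km+j},
\]
which is the first formula in the corollary. For the second formula, I would just specialize to $j = 0$ and use the elementary identity
\[
\frac{1}{(k+1)m+1}\binom{(k+1)m+1}{m} = \frac{1}{km+1}\binom{(k+1)m}{m},
\]
which is a straightforward manipulation of binomial coefficients (writing both sides as $\frac{((k+1)m)!}{m!\,(km+1)!}$).

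Finally, to extend the formula from $\sigma = 213$ to $\sigma = 132$, I would invoke Lemma \ref{lem:modkAltRevFlip}, which tells us that $\rrp_{132}(n,k) = \rrp_{213}(n,k)$ since $213 = \revflip(132)$. This completes the argument for both patterns.

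There is really no technical obstacle here — all the substantive work has been done in the earlier propositions. The only minor point worth stating carefully is that the identification of $\rrp_{213}(n,k)$ with $a_k(n)$ requires matching both the recurrence \eqref{eq:213modKAlternatingRecursion} with \eqref{eq:akrecursion} and the base case $\rrp_{213}(0,k) = 1 = a_k(0)$, which is immediate since the empty permutation vacuously avoids any pattern and is mod-$k$-alternating.
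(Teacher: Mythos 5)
Your proposal is correct and follows exactly the paper's route: the paper's proof is the one-liner ``this follows from \cref{prop:generalFormula} and \eqref{cor:akGivesFussCat},'' with \cref{prop:213Recursion} supplying the identification of $\rrp_{213}(n,k)=\rrp_{132}(n,k)$ with the sequence $a_k(n)$, just as you describe. Your added care about the base case $a_k(0)=1$ and the $\revflip$ bijection is merely an explicit spelling-out of details the paper leaves implicit.
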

\begin{proof}
 This follows from \cref{prop:generalFormula} and \cref{cor:akGivesFussCat}.
\end{proof}

\begin{proposition}\label{prop:onlyIdentityStronger}
For $k\geq 3$ and $n \geq 1$ we have that
\begin{equation}\label{eq:onlyIdentityStronger}
 \rrp_{312}(n,k,r) =
\begin{cases}
1 & \text{ if $r=1$}\\
2^{n/k-1} & \text{ if $r=2$ and $k \mid n$}\\
0 & \text{ otherwise}.
\end{cases}
\end{equation}
Note that by \cref{lem:modkAltRevFlip}, a corresponding statement holds for the pattern $231$.
\end{proposition}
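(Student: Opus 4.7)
The plan is to proceed by strong induction on $n$, using the ``position of $1$'' decomposition of a $312$-avoiding permutation together with \cref{lem:nonEmptyCases}.

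First I would derive a recursion in the spirit of \cref{prop:213Recursion}. Given $\pi \in \RRP_{312}(n,k,r)$ with $\pi(p)=1$, any indices $a<p<b$ with $\pi(a)>\pi(b)$ would make $(\pi(a),1,\pi(b))$ a $312$-pattern; hence every entry to the left of position $p$ is smaller than every entry to its right, and $\pi = [\alpha,1,\beta]$ with $\alpha$ a $312$-avoiding permutation of $\{2,\ldots,p\}$ and $\beta$ a $312$-avoiding permutation of $\{p+1,\ldots,n\}$. The mod-$k$-alternating condition forces $p \equiv 2-r \mod k$, and after shifting $\alpha$ down by $1$ and $\beta$ down by $p$ the residue structure is preserved. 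This yields
\[
\rrp_{312}(n,k,r) = \sum_{\substack{1\le p \le n\\ p \,\equiv\, 2-r \,\mathrm{mod}\, k}} \rrp_{312}(p-1,k,r-1)\cdot \rrp_{312}(n-p,k,r),
\]
where the index $r-1$ is taken mod $k$, so $r-1 = k$ when $r=1$.

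Next I would induct on $n$, handling all values of $r \in \{1,\ldots,k\}$ simultaneously. For $r \in \{3,\ldots,k\}$, the term $p = 1$ does not occur (this would force $r \equiv 1 \mod k$), so every $p$ in the sum satisfies $p - 1 \geq 1$. When $r-1 \in \{3,\ldots,k-1\}$ the first factor vanishes by the inductive hypothesis; the only remaining subcase is $r = 3$, where $r - 1 = 2$ and \cref{lem:nonEmptyCases} forces $k \mid p - 1$, contradicting $p \equiv 2 - r \equiv -1 \mod k$ when $k \geq 3$. Thus $\rrp_{312}(n,k,r) = 0$ for every $r \in \{3,\ldots,k\}$. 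Substituting this into the $r = 1$ recursion (where $r - 1 \equiv k$), only $p = 1$ contributes, giving $\rrp_{312}(n,k,1) = \rrp_{312}(n-1,k,1) = 1$ by iteration. For $r = 2$ and $k \nmid n$, \cref{lem:nonEmptyCases} gives $0$ directly; for $r = 2$ and $k \mid n$, writing $N = n/k$ and $g_N = \rrp_{312}(Nk,k,2)$, and using $\rrp_{312}(p-1,k,1) = 1$ at each $p = jk$, the recursion collapses to $g_N = \sum_{m=0}^{N-1} g_m$ with $g_0 = 1$. This telescopes to $g_N = 2 g_{N-1}$ for $N \geq 2$ and yields $g_N = 2^{N-1}$.

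The principal obstacle is the modular bookkeeping: one must verify, for each $r \notin \{1,2\}$, that the congruence $p \equiv 2 - r \mod k$ is incompatible with every case in which $\rrp_{312}(p-1,k,r-1)$ could be nonzero. Once this compatibility check is noted the induction runs without further complications.
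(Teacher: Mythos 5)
Your proof is correct and follows essentially the same route as the paper: the position-of-$1$ decomposition of a $312$-avoider, strong induction on $n$, and \cref{lem:nonEmptyCases} to kill the residues $r\in\{3,\dotsc,k\}$, with the $r=2$ case collapsing to the doubling recursion $g_N=2g_{N-1}$. Packaging everything into the single uniform recursion over the position $p$ of $1$ is a cleaner presentation of the paper's case analysis, but not a different argument.
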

\begin{proof}
We shall make the claim slightly stronger,
namely that  $\RRP_{312}(n,k,1)$ consists of only the identity permutation.
We proceed by induction over $n$ and assume
that the statement in \eqref{eq:onlyIdentityStronger} is true
for permutations of length less than $n$; the base cases $n=1$ and $n=2$ are straightforward to verify.
\emph{Note that since $k \geq 3$, $r=0$ and $r=2$ are different cases.}

Suppose that $\pi \in \RRP_{312}(n,k,r)$ be of the form
\begin{equation}\label{eq:312form}
	\pi = [\alpha,1,\beta],
\end{equation}
for some $n\geq 3$.

\textbf{Case $r=1$.} If $\alpha$ is the empty sequence, then $[\beta-1] \in \RRP_{312}(n-1,k,1)$ and this must be the identity permutation by induction.
Hence, we can assume that the length, say $s$, of $\alpha$ is greater than $0$ and we must therefore have that
$n \geq k+1$.

Since $\pi$ avoids $312$, $\max(\alpha)<\min(\beta)$. In particular, $\alpha$ is constructed from $\{2,3,\dotsc, s+1\}$. So $[\alpha,1]\in \RRP_{312}(s+1,k,1)$. 

The case for $\beta$ being non-empty is impossible due to the induction hypothesis,
so $\pi$ must be of the form $\pi = [\alpha,1]$.
But then, $[\alpha-1]$ is an element in $\RRP_{312}(s,k,0)$,
but this set is empty (by induction hypothesis).
Hence, the case $r=1$ is done.

\textbf{Case $r \neq 1$.}
By \cref{lem:nonEmptyCases}, we can assume $k \mid n$ and $\alpha-1\in \RRP_{312}(s,k,r-1)$. But this set is empty unless $r=2$ or $r=3$ and $k \mid s$. The later case is impossible since the form in (\ref{eq:312form}) tells us $r+s\equiv 1\mod k$, which implies that $k\nmid s$. For the first case, $r=2$, there is only one possibility for
$\alpha$, namely $[2,3,\dotsc,s+1]$ by induction. 

And we know that $[\beta-s-1] \in \RRP_{312}(n{-}s{-}1,k,2)$. Moreover, we also need that $k \mid (n{-}s{-}1)$ and there are (by induction) $2^{(n{-}s{-}1)/k-1}$
options for $\beta$.
Accounting for all possible values of $n{-}s{-}1$ in $\{0,k,2k,\dotsc,n-1\}$,
we get $1+1+2+4+\dotsb+2^{(n{-}1)/k-1} = 2^{n/k-1}$
ways to obtain $\pi$. 

\end{proof}

The following proposition is analogous to \cref{prop:onlyIdentityStronger}.
\begin{proposition}\label{prop:213kgeq3}
Let $k \geq 3$.
Then, with $a_k(n)$ defined as in \cref{prop:generalFormula}, we have
\begin{equation}\label{eq:213kgeq3}
\rrp_{213}(n,k,r) =
\begin{cases}
a_k(n) &\text{ if $r=1$ or $k \mid n$} \\
0 &\text{ otherwise.}
\end{cases}
\end{equation}
By \cref{lem:modkAltRevFlip}, a similar statement holds for the pattern $132$.
\end{proposition}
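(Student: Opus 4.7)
The plan is to split the statement into three disjoint cases and reduce each to an earlier result. First, when $r \neq 1$ and $k \nmid n$, we have $\RRP(n,k,r) = \emptyset$ directly from \cref{lem:nonEmptyCases}, which forces $\rrp_{213}(n,k,r) = 0$. Second, the case $r = 1$ is precisely the statement $\rrp_{213}(n,k) = a_k(n)$, already established in \cref{prop:213Recursion} combined with \cref{prop:generalFormula}. All of the remaining substance therefore lies in the case $r \neq 1$ with $k \mid n$.

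For that case, I would write $n = km$ and mimic the decomposition from the proof of \cref{prop:213Recursion}. Any $\pi \in \RRP_{213}(n,k,r)$ takes the form $\pi = [\alpha, 1, \beta]$ with $\min(\alpha) > \max(\beta)$, exactly as in \cref{fig:213RRPFig}. Setting $s \coloneqq |\alpha|$, so that $1$ sits in position $s+1$, the alternating condition $\pi(s+1) \equiv r+s \pmod{k}$ together with $\pi(s+1)=1$ forces $s \equiv 1-r \pmod{k}$. Since $r \in \{2,\dotsc,k\}$, this means $s = kt + q$ for some integer $t \geq 0$, where $q \coloneqq k+1-r \in \{1,\dotsc,k-1\}$.

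Next I would normalize the two blocks by setting $\alpha' \coloneqq \alpha - (n-s)$ and $\beta' \coloneqq \beta - 1$. A short residue computation, using $n \equiv 0$ and $s \equiv 1-r \pmod{k}$, should show that both normalized blocks land in the $r=1$ family; namely $\alpha' \in \RRP_{213}(s,k,1)$ and $\beta' \in \RRP_{213}(n-s-1,k,1)$. Their cardinalities are then $a_k(s)$ and $a_k(n-s-1)$ by the already-known $r=1$ case, and summing over the admissible $t$ produces the convolution
\[
\rrp_{213}(km,k,r) = \sum_{t=0}^{m-1} a_k(kt + q)\, a_k\!\bigl(k(m-t-1) + (r-2)\bigr).
\]
The final step is to match this against \eqref{eq:anotherrecur}: the right-hand side is exactly \cref{prop:anotherrecur} with its parameter specialized to $q \in \{1,\dotsc,k-1\} \subset \{0,1,\dotsc,k-1\}$, and therefore evaluates to $a_k(km) = a_k(n)$.

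The step I expect to be the main obstacle is the residue bookkeeping in the normalization --- verifying that the shifts $-(n-s)$ on $\alpha$ and $-1$ on $\beta$ really do drop both subwords into the $r=1$ class, rather than into some other remainder $r'$ (which would force an inductive argument on $r$ instead of a direct reduction). Once the residues are aligned, the identification with \eqref{eq:anotherrecur} closes the argument with no further Lagrange-inversion style calculation needed.
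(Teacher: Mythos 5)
Your proposal is correct and follows essentially the same route as the paper: the same split into the three cases, the same decomposition $\pi=[\alpha,1,\beta]$ with the position of $1$ constrained to $s\equiv 1-r\pmod k$, the same normalization of the two blocks into the $r=1$ family, and the same final appeal to \eqref{eq:anotherrecur} with the parameter $r'=k+1-r$. The residue bookkeeping you flag as the potential obstacle does go through exactly as you anticipate, so no inductive detour on $r$ is needed.
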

\begin{proof}
We have already proved the case $r=1$ in \cref{prop:213Recursion},
and the ``otherwise'' case follows from \cref{lem:nonEmptyCases}.
Hence, only the case $k \mid n$, $r \neq 1$ needs to be proved.
Note that it suffices to show that $\rrp_{213}(km,k,r) = a_k(km)$.
\medskip

It is straightforward to verify that \eqref{eq:213kgeq3} holds whenever $n \leq 2$.
Now suppose $\pi \in \RRP_{213}(n,k,r)$ for some $r \in \{2,3,\dotsc,k\}$ and $n \geq 3$, with $n=km$.
Then $\pi$ is of the form
\[
  [\alpha_1,1, \alpha_{2}]
\]
where $\min(\alpha_1)>\max(\alpha_2)$, as seen in \cref{fig:213RRPFig}.
Let $\alpha_1$ have length $s$, so that $\alpha_2$ has length $n-1-s$.
Note that we must have $k \mid s+r-1$, in order for the first entry in $\alpha_1$
to have remainder $r$ mod $k$.
We then have that 
\[
(\alpha_2-1) \in \RRP_{213}(n-1-s,k,1) \text{ and }
(\alpha_1-(n-s)) \in \RRP_{213}(s,k,1).
\]
Conversely, given permutations $\tau_1 \in \RRP_{213}(s,k,1)$,
and $\tau_2\in \RRP_{213}(n-1-s,k,1)$ for some $s$ satisfying $k \mid s+r-1$,
we can construct a unique $\pi \in \RRP_{213}(n,k,r)$, by reversing
the above process. 
Hence, when $k\mid n$, we have that
\begin{equation*}
 \rrp_{213}(n,k,r) = \sum_{\substack{s=1 \\ k \mid s+r-1}}^{n-1} \rrp_{213}(s,k,1) \cdot \rrp_{213}(n-s-1,k,1).
\end{equation*}
Now, setting $n=km$, $s=kj+r'$ (note that $s$ can be expressed in this way for the fixed $r' = k-r+1$)
and using that $a_k(i) = \rrp_{213}(i,k,1)$,
we get
\begin{equation*}
 \rrp_{213}(km,k,r) = \sum_{j=0}^{m-1} a_k(kj+r') \cdot a_k(km-(kj+r')-1).
\end{equation*}
By \eqref{eq:anotherrecur} in \cref{prop:anotherrecur}, we may now conclude that $\rrp_{213}(km,k,r) = a_k(km)$.

\end{proof}

\subsection{PAPs avoiding one of 132, 213, 231, and 312}

The case $k=2$ is rather different from $k \geq 3$,
as indicated by \cref{prop:onlyIdentityStronger,prop:213kgeq3}.
In this subsection we treat the $k=2$ case for
the patterns in $\{132,213,231,312\}$.

\begin{lemma}\label{lem:132Structure}
Let $\pi \in \PAP_{132}(n)$, with $n \geq 2$.
Then either $n$ appears after $n-1$ in $\pi$,
or $n$ is odd and appears at position $1$.
\end{lemma}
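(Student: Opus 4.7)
The plan is to apply the standard $132$-avoiding structural decomposition from Figure~\ref{fig:132Avoiding} directly to $\pi$, and to split into two cases depending on whether the maximum entry $n$ occupies the first position. No nontrivial use of the parity-alternating hypothesis is needed beyond fixing which parities may legally appear in position $1$.

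First I would write
\[
  \pi = [\alpha_1, \ldots, \alpha_j, \, n, \, \beta_1, \ldots, \beta_{n-1-j}],
\]
with $\min(\alpha) > \max(\beta)$ as required by the $132$-avoidance structure discussed in the Preliminaries, allowing $\alpha$ to be empty when $j = 0$. If $\alpha$ is empty, then $\pi_1 = n$; since $\pi \in \PAP(n)$ starts with an odd integer by definition, $n$ must be odd, which is exactly the second alternative of the statement.

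If instead $\alpha$ is non-empty, I would argue that $n-1$ must lie in $\alpha$. The multiset of entries in $\alpha$ and $\beta$ together is $\{1,2,\ldots,n-1\}$, and since $\alpha$ is non-empty we have $\min(\alpha) \leq n-1$. The inequality $\min(\alpha) > \max(\beta)$ then forces $\max(\beta) \leq n-2$, so $n-1 \notin \beta$ and hence $n-1 \in \alpha$. This places $n-1$ before $n$, giving the first alternative. I do not expect any real obstacle: the only subtlety is the degenerate case $j = 0$, which is precisely why the statement is phrased with two alternatives, and the PAP starting-parity condition enters only to identify $n$ as odd in that degenerate case.
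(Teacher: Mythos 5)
Your proof is correct and follows essentially the same route as the paper's: both decompose $\pi$ around the maximum entry $n$ using the $132$-avoidance structure of \cref{fig:132Avoiding}, deduce $n-1$ lies to the left of $n$ when the left block is non-empty, and invoke the PAP starting-with-odd condition only in the degenerate case $\pi_1 = n$. Your version merely spells out the step $\max(\beta) \leq n-2 \Rightarrow n-1 \in \alpha$ that the paper leaves implicit.
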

\begin{proof}
The case $n=2$ is easy, so suppose $n \geq 3$ and $\pi(1)\neq n$.
Then the numbers to the left of $n$ must be greater than the
numbers from the right of $n$ in $\pi$
(since we must avoid the pattern $132$, see \cref{fig:132Avoiding}).
In particular, $n-1$ must appear before $n$, and we are done.
\end{proof}

\begin{proposition}\label{prop:132-231}
For $m\geq 0$, set
	\begin{align*}
	a_m \coloneqq \pap_{132}(2m), \quad b_m \coloneqq \pap_{132}(2m+1), \\
	a'_m \coloneqq \pap_{231}(2m), \quad b'_m \coloneqq \pap_{231}(2m+1).
	\end{align*}
We then have $a_m = a'_m$, $b_m = b'_m$ for all $m$.
\end{proposition}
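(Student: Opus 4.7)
The plan is to split into the odd-length and even-length cases and to combine \cref{lem:revBij} with a recursive structural analysis based on \cref{lem:132Structure}.

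For odd $n = 2m+1$, the lemma directly supplies the bijection $\rev \colon \PAP_{132}(2m+1) \to \PAP_{231}(2m+1)$, so $b_m = b_m'$ is immediate and requires no further work.

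For even $n = 2m$, \cref{lem:revBij} only gives the bijection $\rev \colon \PAP_{231}(2m) \to \PAP^*_{132}(2m)$; writing $a_m^* \coloneqq |\PAP^*_{132}(2m)|$, this yields $a_m' = a_m^*$, so the statement reduces to showing $a_m = a_m^*$. I would prove this by deriving matching recurrences. Given $\pi \in \PAP_{132}(n)$ (or $\PAP^*_{132}(n)$) with $n \geq 2$, \cref{lem:132Structure} locates $n$ either at position $1$ (only possible when the parity allows) or at some later position with $n-1$ preceding it; in every case, $132$-avoidance forces the Catalan-style split of \cref{fig:132Avoiding}, with the prefix taking the top $j-1$ values and the suffix the bottom $n-j$ values, each being $132$-avoiding on its own. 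I would then track how the parity-alternating constraint restricts to each block under the requisite value-shifts, using that an even value-shift preserves the PAP/PAP* type of a sub-permutation while an odd shift swaps the two types, and similarly for the induced re-indexing of positions in the suffix.

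Carrying this out for the even and odd total lengths separately, I expect to obtain
\[
a_m \;=\; \sum_{s=0}^{m-1} b_s \, a_{m-1-s} \qquad \text{and} \qquad a_m^* \;=\; \sum_{s=0}^{m-1} a_s \, b_{m-1-s}.
\]
Reindexing the second sum via $s \mapsto m-1-s$ makes the two right-hand sides identical, and since $a_0 = a_0^* = 1$ a straightforward induction on $m$ gives $a_m = a_m^*$, hence $a_m = a_m'$. The main obstacle I anticipate is the parity-bookkeeping at each block split: one must verify in each sub-case that the length of the block together with the parity of the required value-shift forces exactly the correct choice of PAP versus PAP* for the resulting sub-permutation, as this is precisely what makes the recurrences for $a_m$ and $a_m^*$ coincide. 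Once this is done, the rest is formal.
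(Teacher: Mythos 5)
Your proposal is correct, and for the odd case it coincides with the paper's argument (both just invoke \cref{lem:revBij}). For the even case, however, you take a genuinely different and arguably cleaner route. The paper keeps both patterns in play: it uses \cref{lem:132Structure} to split $\pi\in\PAP_{132}(2m)$ into \emph{three} blocks around the subword positions of $n-1$ and $n$, obtaining the mixed convolutions $a_m=\sum_{i+j+k=m-1}a_i a'_j a_k$ and $a'_m=\sum_{i+j+k=m-1}a'_i a_j a'_k$, and then needs an induction on $m$ to conclude $a_m=a'_m$. You instead reduce everything to the single pattern $132$ via $a'_m=|\PAP^*_{132}(2m)|$ and split only at the position of $n$ (the generic decomposition of \cref{fig:132Avoiding}; you do not actually need \cref{lem:132Structure} for this). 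Your parity bookkeeping is right: for $\PAP_{132}(2m)$ the maximum sits at an even position, the prefix standardizes by an even shift to an odd-length $\PAP$ and the suffix to an even-length $\PAP$, giving $a_m=\sum_s b_s a_{m-1-s}$; for $\PAP^*_{132}(2m)$ the maximum sits at an odd position and the odd shift flips the type of the prefix while the odd re-indexing flips the suffix, giving $a^*_m=\sum_s a_s b_{m-1-s}$. Since both right-hand sides involve only the $a_i$ and $b_j$ (no starred quantities), reindexing makes them literally identical, so $a_m=a^*_m$ follows at once --- your appeal to induction is not even needed. What the paper's version buys is the explicit appearance of $a'_j$ inside the recursion (which it reuses in spirit elsewhere); what yours buys is a two-block rather than three-block decomposition and an equality that is manifest rather than inductive. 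The only detail you should make explicit when writing it up is the converse direction of the decomposition (that gluing any admissible pair back yields a $132$-avoiding permutation of the correct parity type), so that the recurrences are equalities and not just upper bounds; this is routine.
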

\begin{proof}
We already know that $b_m=b'_m$, since $\rev: \PAP_{132}(2m+1)\to \PAP_{231}(2m+1)$
is a bijection.

\medskip
	\textbf{Claim:}
	We have
	\begin{equation}
	a_{m} = \sum_{\substack{i+j+k=m-1 \\ i,j,k \geq 0}} a_i a'_j a_k, \text{ and }
		a'_{m} = \sum_{\substack{i+j+k=m-1 \\ i,j,k \geq 0}} a'_i a_j a'_k.
	\label{eq:132RecursionMod}
	\end{equation}
	First set $n\coloneqq 2m$ and suppose $\pi \in \pap_{132}(2m)$.
	By \cref{lem:132Structure}, we know that it must be of the form
	\[
	[\underbrace{\alpha_1, \alpha_2, \dotsc, \alpha_{2i}}_\alpha, \; n-1, \;
	\underbrace{\beta_{1},\dotsc, \beta_{2j}}_\beta, \; n, \;
	\underbrace{\gamma_{1},\dotsc,\gamma_{2k}}_\gamma].
	\]
	Moreover, since the permutation is $132$-avoiding (see \cref{fig:132Avoiding}) it must be of the form
	\begin{equation}\label{eq:132RecursionFig}
	\begin{tikzpicture}[scale=1.6]
	\draw[black,fill=pOrange] (7,0) rectangle (11,4);
	\draw[black,fill=pBlue] (4,4) rectangle (6,6);
	\draw[black,fill=pAlgae] (0,6) rectangle (3,9);
	\draw[black,fill=pSkin] (3,9) rectangle (4,10);
	\draw[black,fill=pPink] (6,10) rectangle (7,11);
	\draw[black, dashed] (3,0) -- (3,11);
	\draw[black, dashed] (4,0) -- (4,11);
	\draw[black, dashed] (6,0) -- (6,11);
	\draw[black, dashed] (7,0) -- (7,11);
	\draw[black, dashed] (0,4) -- (11,4);
	\draw[black, dashed] (0,6) -- (11,6);
	\draw[black, dashed] (0,9) -- (11,9);
	\draw[black, dashed] (0,10) -- (11,10);
	\node at (6.5,10.5) {$n$};
	\node at (3.5,9.5) {$\scriptstyle{n-1}$};
	\node at (9,2) {$\gamma$};
	\node at (5,5) {$\beta$};
	\node at (1.5,7.5) {$\alpha$};
	\end{tikzpicture}
	\end{equation}
Now, the permutation matrices $\alpha$ and $\gamma$ can be interpreted
as smaller permutations in some $\PAP_{132}(2i)$ and $\PAP_{132}(2k)$, respectively.
Moreover, the permutation $\beta$ can be seen as an element in $\PAP^*_{132}(2j)$.
By the reversal bijection, we have that $\beta$ correspond to
some element in $\PAP_{231}(2j)$, whose cardinality is $a'_j$.
Hence, we can conclude the first formula in \eqref{eq:132RecursionMod}.

Using a similar argument (just reverse everything),
we can show the second recursive identity.
An inductive argument over $m$ now allows us to
conclude that $a_m = a'_m$ for all $m \geq 0$.
\end{proof}

\begin{corollary}
	For any $\sigma \in \{132,213,231,312\}$, we have that
	\begin{equation}
	\pap_{\sigma}(2m) = \frac{1}{2m+1}\binom{3m}{m}\quad\text{and}\quad
	\pap_{\sigma}(2m+1) = \frac{1}{2m+1}\binom{3m+1}{m+1}.
	\end{equation}
\end{corollary}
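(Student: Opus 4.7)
The plan is to assemble the four cases out of results already established in the paper, so that essentially no new work is required beyond a symmetry argument and a routine binomial identity.

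First I would note that for $k=2$ the mod-$k$-alternating condition becomes exactly the parity-alternating condition starting with an odd entry, so that $\PAP(n) = \RRP(n,2)$ and $\pap_\sigma(n) = \rrp_\sigma(n,2)$ for any pattern $\sigma$. This means the cases $\sigma \in \{132,213\}$ follow immediately from \cref{cor:mainFussCatalan} taken at $k=2$: writing $n = 2m$ (so $j=0$) gives $\pap_\sigma(2m) = \frac{1}{2m+1}\binom{3m}{m}$ directly, and writing $n = 2m+1$ (so $j=1$) gives $\pap_\sigma(2m+1) = \frac{2}{2m+2}\binom{3m+1}{2m+1}$, which I would then convert via a one-line manipulation using $\binom{3m+1}{2m+1} = \binom{3m+1}{m}$ and $(m+1)^{-1}(3m+1)!/[m!(2m+1)!] = (2m+1)^{-1}(3m+1)!/[(m+1)!(2m)!]$ to match the target form $\frac{1}{2m+1}\binom{3m+1}{m+1}$.

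Next I would dispatch $\sigma = 231$ using \cref{prop:132-231}, which already established $\pap_{231}(n) = \pap_{132}(n)$ for all $n$ (both in the even and odd cases, via $a_m = a_m'$ and $b_m = b_m'$). Then for $\sigma = 312$ I would invoke \cref{lem:modkInverse}: since $312^{-1} = 231$, the inverse map restricts to a bijection $\RRP_{312}(n,2) \to \RRP_{231}(n,2)$, giving $\pap_{312}(n) = \pap_{231}(n)$. Chaining these three identities yields $\pap_{132} = \pap_{213} = \pap_{231} = \pap_{312}$, and the formula from the first step applies to all of them.

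There is really no obstacle here; the only mildly fiddly step is verifying that the two equivalent forms of the Fuss--Catalan number in the $j=1$ case agree, which is a purely algebraic manipulation of binomial coefficients. Everything else is a direct citation of the preceding results.
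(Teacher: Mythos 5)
Your proposal is correct and follows essentially the same route as the paper: the explicit formula from \cref{cor:mainFussCatalan} at $k=2$, \cref{prop:132-231} to transfer to the pattern $231$, and a symmetry of $\RRP(n,2)$ to reach $312$ (the paper uses $\revflip$ where you use the inverse map via $312^{-1}=231$, an immaterial difference). Your binomial manipulation reconciling $\tfrac{1}{m+1}\binom{3m+1}{2m+1}$ with $\tfrac{1}{2m+1}\binom{3m+1}{m+1}$ checks out.
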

\begin{proof}
The formulas follow from \cref{cor:mainFussCatalan} for $\sigma=132$. By \cref{prop:132-231} the statement holds
for the pattern $231$. And
by applying $\revflip$, we obtain the same counts for the other two patterns.
\end{proof}

\section{Mod-k-alternating subexcedant functions}\label{sec:subex}

Subexcedant functions, and their close connection with permutations
was initially studied in \cite{MantaciRakotondrajao2001}. 
The topic of subexedant functions is rather new, but we refer to
\cite{AlexanderssonGetachew2021x} and \cite{BeyeneMantaci2021x}
as examples of applications of these objects.

\begin{definition}\label{subex}
A \defin{subexcedant function}  $f$ on $[n]$ is a map $f : [n] \longrightarrow [n]$
such that
\begin{equation*}
1\leq f(i)\leq i \text{ for all } 1\leq i\leq n.
\end{equation*}
We denote the set of all
subexcedant functions on $[n]$ by $\defin{\SEF_n}$.
\end{definition}
One can easily see that $\mathcal{F}_n$ has cardinality $n!$. 
The bijection $\defin{\sefToPerm}: \SEF_n \longrightarrow \symS_n$, 
which is defined as a product 
\begin{equation}\label{eq:sefToPerm}
  \defin{\sefToPerm(f)} \coloneqq (n\,\,f(n))  \dotsm (2\,\,f(2)) (1\,\,f(1)),  
\end{equation}
of cycles of maximum length 2, which is the one described in \cite{MantaciRakotondrajao2001}.

The inverse mapping is defined as follows:
\begin{equation}\label{eq:permToSefRecursion}
 \sefToPerm^{-1}(\sigma)_j \coloneqq
\begin{cases}
 \sigma(n) \text{ if $j=n$,} \\
 \sefToPerm^{-1}( \; \sigma \circ (n\,\,\sigma(n)) \;  )_j &\text{ otherwise},
\end{cases}
\end{equation}
for $\sigma \in \symS_n$ and $j \in [n]$.

\begin{proposition}\label{prop:sefRRPs}
Suppose $\pi \in \symS_n$ and $f_\pi$ is the corresponding element in $\SEF_n$.
Then $\pi \in \RRP(n,k)$ if and only if $f_\pi(i) \equiv i \mod k$.
\end{proposition}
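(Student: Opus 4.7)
The plan is to prove the equivalence by induction on $n$, exploiting the recursive structure of the product defining $\sefToPerm$ in \eqref{eq:sefToPerm}. Let $f \in \SEF_n$ and write $g \coloneqq f|_{[n-1]} \in \SEF_{n-1}$. Set $\sigma \coloneqq \sefToPerm(f)$, and let $\tilde\sigma \in \symS_n$ be the extension of $\sefToPerm(g)$ that fixes $n$. By associating the rightmost $n-1$ transpositions, we obtain the key decomposition
\[
 \sigma = (n\,\,f(n)) \circ \tilde\sigma.
\]

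Since $\tilde\sigma(n) = n$, evaluating at $n$ gives $\sigma(n) = f(n)$, so the congruence condition at position $n$ is the same for $\sigma$ and $f$. For $i < n$, there are two cases: if $\tilde\sigma(i) \neq f(n)$, then $\sigma(i) = \tilde\sigma(i)$; if $\tilde\sigma(i) = f(n)$, then $\sigma(i) = n$. I will use these two cases to relate the mod-$k$-alternating properties of $\sigma$ and $\tilde\sigma$ given that $f(n) \equiv n \pmod k$.

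For the forward direction, assume $\sigma \in \RRP(n,k)$. Then $f(n) = \sigma(n) \equiv n \pmod k$. For $i < n$ with $\tilde\sigma(i) = f(n)$, we have $\sigma(i) = n$, so $n \equiv i \pmod k$, and thus $\tilde\sigma(i) = f(n) \equiv n \equiv i \pmod k$; otherwise $\tilde\sigma(i) = \sigma(i) \equiv i \pmod k$ directly. Hence $\sefToPerm(g) \in \RRP(n-1,k)$, and the induction hypothesis yields $g(i) \equiv i \pmod k$ for $i < n$, completing this direction. For the reverse direction, assume $f(i) \equiv i \pmod k$ for all $i$. By induction, $\sefToPerm(g) \in \RRP(n-1,k)$. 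For $i < n$: if $\tilde\sigma(i) \neq f(n)$, then $\sigma(i) = \tilde\sigma(i) \equiv i \pmod k$; if $\tilde\sigma(i) = f(n)$, then using $\tilde\sigma(i) \equiv i \pmod k$ and $f(n) \equiv n \pmod k$ we get $i \equiv n \pmod k$, so $\sigma(i) = n \equiv i \pmod k$. Together with $\sigma(n) = f(n) \equiv n \pmod k$, this shows $\sigma \in \RRP(n,k)$.

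The only subtle point is the case analysis at position $i$ when $\tilde\sigma(i) = f(n)$, but this is short once the decomposition is in place. The base case $n=1$ is trivial since both conditions are vacuous.
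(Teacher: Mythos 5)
Your proof is correct. The decomposition $\sigma=(n\;f(n))\circ\tilde\sigma$, with $\tilde\sigma$ the extension of $\sefToPerm(f|_{[n-1]})$ fixing $n$, does follow from \eqref{eq:sefToPerm} under the paper's composition convention, the two-case evaluation of $\sigma(i)$ for $i<n$ is right (including the degenerate case $f(n)=n$, where the second case simply never occurs), and both directions of the induction go through. Your route differs from the paper's in an interesting way: the paper treats the two implications by different arguments. For ``$f_\pi(i)\equiv i$ implies $\pi\in\RRP(n,k)$'' it observes globally that the product \eqref{eq:sefToPerm} splits into $k$ disjoint products of transpositions, each supported on a single residue class mod $k$, so each class is stabilized setwise; for the converse it appeals to the recursion \eqref{eq:permToSefRecursion} defining $\sefToPerm^{-1}$ and an induction showing one only ever swaps positions in the same residue class. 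You instead run a single induction on $n$ that peels off the leftmost transposition and handles both implications with the same case analysis. Your version is more uniform and arguably more airtight (the paper's converse leans on \eqref{eq:permToSefRecursion}, where the order of composition is somewhat delicate), while the paper's forward argument buys a structural insight your proof does not make explicit: the residue classes decompose $\RRP(n,k)$ as a direct product of smaller symmetric groups, consistent with the cardinality formula for $\rrp(n,k)$.
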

\begin{proof}
First suppose that $f_\pi(i) \equiv i \mod k$ for some $\pi\in\symS_n$.
In the product \eqref{eq:sefToPerm}, we see that $f_\pi$ has $k$ disjoint cycle products;
the first product contains integers that are congruent to $1\mod k$, the second product contains
integers that are congruent to $2\mod k$ and so on, with the 
last product containing integers that are congruent to $k\mod k$.
Hence, $\pi \in \RRP(n,k)$.

On the other hand, suppose now that $\pi \in \RRP(n,k)$, and consider $f_\pi = \sefToPerm^{-1}(\pi)$.
In the recursion in \eqref{eq:permToSefRecursion},
we always interchange integers at positions $i$ and $j$ where $j\equiv i\mod k$,
so (via an inductive argument) we must have that $f_\pi(i) \equiv i \mod k$ for all $i$.
\end{proof}

\begin{example}
	The subexcedant function $12341634$ corresponds to the permutation $[5,2,7,8,1,6,3,4]$ in $\RRP(8,4)$.
\end{example}
\begin{definition}
	A word $w = (w_1,\dotsc,w_n) \in \setN^n$ is a \defin{Catalan word} if
	\[
	w_1 =1 \text{ and } 1\leq w_i \leq w_{i-1}+1 \text{ whenever $2 \leq i\leq n$}.
	\]
\end{definition}
There are $\frac{1}{n+1}\binom{2n}{n}$ Catalan words of length $n$.

\begin{definition}
	A permutation $\pi$ in $\PAP(n)$ is called a \defin{Catalan PAP} if $f_\pi$ is a Catalan word.
\end{definition}
Let $\defin{\PAP_C(n)}$ be the set of Catalan PAPs, and let $\defin{\pap_C(n)}$ be its cardinality.
We define $\defin{\RRP_C(n,k)}$ in the same manner.

\begin{definition}
	A sequence $(a_1,\dotsc,a_n)$ is an \defin{area sequence} if
	\[
	a_1 =0 \text{ and } 0\leq a_i \leq a_{i-1}+1 \text{ whenever $2 \leq i\leq n$}.
	\]
\end{definition}

Area sequences are in bijection with Catalan words: $(a_1,\dotsc,a_n)$
is an area sequence if and only if $(a_1+1,a_2+1,\dotsc,a_n+1)$
is a Catalan word. The notion of area sequences plays an 
important role in \emph{diagonal harmonics}, see \cite{qtCatalanBook}.

There is a well-known bijection between Dyck paths and area sequences.
We illustrate this bijection in the following example.
\begin{example}\label{ex:dyckPath}
The Dyck path $P=\texttt{nnennneneeneee}$ has the area sequence $\avec = (0,1,1,2,3,3,2)$.
\begin{equation}
P=
\begin{tikzpicture}
\begin{scope}
\fill[lightgray](0,1)--(0,2)--(1,2)--(1,5)--(2,5)--(2,6)--(4,6)--(4,7)--(6,7)--(6,6)--(5,6)--(5,5)--(4,5)--(4,4)--(3,4)--(3,3)--(2,3)--(2,2)--(1,2)--(1,1)--cycle;
\draw[step=1em,gray](0,0) grid (7,7);
\draw[thickLine](0,0)--(0,2)--(1,2)--(1,5)--(2,5)--(2,6)--(4,6)--(4,7)--(7,7);

\draw
(0,0)node[mydot]{}
(7,7)node[mydot]{};
\draw
(1,1)node[entries]{$1$}
(2,2)node[entries]{$2$}
(3,3)node[entries]{$3$}
(4,4)node[entries]{$4$}
(5,5)node[entries]{$5$}
(6,6)node[entries]{$6$}
(7,7)node[entries]{$7$};
\end{scope}
\end{tikzpicture}
\end{equation}
The number of shaded boxes in row $i$ (from the bottom) is given by $a_i$,
and this provides the bijection between area sequences and Dyck paths.
\end{example}

We denote the set of all Dyck paths of size $n$ such that the number of
boxes in each row above the path is a multiple of $k$ by $\defin{L^k_{n}}$,
see \cref{ex:specialDyckPaths}.
\begin{corollary}
The set $\RRP_C(n,k)$ has the same cardinality as $L^k_{n}$.
\end{corollary}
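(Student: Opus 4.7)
The plan is to exhibit an explicit bijection $\RRP_C(n,k) \to L^k_n$ by composing three standard correspondences and checking that the defining modular condition survives each step.

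First, by \cref{prop:sefRRPs}, the inverse map $\sefToPerm^{-1}$ restricts to a bijection between $\RRP_C(n,k)$ and the set of Catalan words $f = (f_1,\dotsc,f_n)$ satisfying $f_i \equiv i \pmod{k}$ for all $i$; this is essentially the definition of $\RRP_C(n,k)$, combined with the fact that $f_\pi$ inherits the mod-$k$ congruence from $\pi$.

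Second, the substitution $a_i \coloneqq f_i - 1$ gives the well-known bijection between Catalan words and area sequences of length $n$, and carries the condition $f_i \equiv i \pmod{k}$ to $a_i \equiv i-1 \pmod{k}$. The Catalan-word inequalities $1 \leq f_i \leq f_{i-1}+1$ translate in the obvious way to the area-sequence inequalities $0 \leq a_i \leq a_{i-1}+1$.

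Third, apply the bijection between area sequences and Dyck paths illustrated in \cref{ex:dyckPath}. Under this bijection, the vertical segment of the Dyck path in row $i$ sits at horizontal coordinate $p_i \coloneqq (i-1) - a_i$, so the number of boxes in row $i$ that lie above the path --- that is, the cells of row $i$ lying between the left edge of the grid and the vertical segment of the path --- is exactly $(i-1) - a_i$. Consequently, the defining condition of $L^k_n$ that this count be divisible by $k$ for every $i$ is equivalent to $a_i \equiv i-1 \pmod{k}$.

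Composing the three bijections yields the required bijection $\RRP_C(n,k) \to L^k_n$, establishing the equality of cardinalities. The only point requiring real care is the geometric matching in the third step: one must identify ``boxes in row $i$ above the path'' with the count $(i-1) - a_i$ read off from the area sequence, by carefully comparing against the conventions in \cref{ex:dyckPath}. Once this orientation is pinned down, the modular conditions line up immediately, and the remainder of the argument is a routine chain of equivalences.
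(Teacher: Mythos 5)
Your proposal is correct and follows essentially the same route as the paper: pass to the subexcedant function via \cref{prop:sefRRPs}, shift by one to get an area sequence, and observe that the number of boxes above the path in row $i$ is $(i-1)-a_i = i - f_i$, which is a multiple of $k$ exactly when $f_i \equiv i \pmod{k}$. Your version is merely more explicit about the intermediate bijections and the geometric convention.
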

\begin{proof}
Let $\pi\in \RRP_C(n,k)$. By \cref{prop:sefRRPs}, the corresponding subexcedant function
$f_\pi=w_1w_2\dotsm w_n$ is a Catalan word with 
the property $i-w_i=\alpha_i k$, for all $i\in [n]$ where  $\alpha_i\in \setN$. 
Hence, $\avec = (w_1-1,w_2-1,\dotsc,w_n-1)$ is an 
area sequence for some Dyck path.
Now, the number of boxes in the $i^\thsup$ row above the Dyck path is given by
 \begin{equation}
\label{equ:areaseq}
  (i-1) - (w_i-1)=\alpha_i k,
\end{equation}
since the $i^\thsup$ row has $i-1$ boxes above the diagonal in the $n\times n$ grid.
\end{proof}

\begin{example}\label{ex:specialDyckPaths}
Here are the 9 Dyck paths corresponding to the entries in $\RRP_C(7,3)$.
\[
\begin{tikzpicture}[scale=0.9]
\fill[lightgray](0,0)--(0,1)--(0,2)--(0,3)--(1,3)--(2,3)--(3,3)--(3,4)--(3,5)--(3,6)--(4,6)--(5,6)--(6,6)--(6,7)--(7,7)--(6,7)--(6,6)--(5,6)--(5,5)--(4,5)--(4,4)--(3,4)--(3,3)--(2,3)--(2,2)--(1,2)--(1,1)--(0,1);
\draw[step=1em,gray](0,0) grid (7,7);
\draw[thickLine](0,0)--(0,1)--(0,2)--(0,3)--(1,3)--(2,3)--(3,3)--(3,4)--(3,5)--(3,6)--(4,6)--(5,6)--(6,6)--(6,7)--(7,7);
\draw (0,0)node[mydot]{}
(7,7)node[mydot]{};
\draw
(1,1) node[entries]{$1$}
(2,2) node[entries]{$2$}
(3,3) node[entries]{$3$}
(4,4) node[entries]{$4$}
(5,5) node[entries]{$5$}
(6,6) node[entries]{$6$}
(7,7) node[entries]{$7$}
;
\end{tikzpicture}
\begin{tikzpicture}[scale=0.9]
\fill[lightgray](0,0)--(0,1)--(0,2)--(0,3)--(0,4)--(1,4)--(2,4)--(3,4)--(3,5)--(3,6)--(4,6)--(5,6)--(6,6)--(6,7)--(7,7)--(6,7)--(6,6)--(5,6)--(5,5)--(4,5)--(4,4)--(3,4)--(3,3)--(2,3)--(2,2)--(1,2)--(1,1)--(0,1);
\draw[step=1em,gray](0,0) grid (7,7);
\draw[thickLine](0,0)--(0,1)--(0,2)--(0,3)--(0,4)--(1,4)--(2,4)--(3,4)--(3,5)--(3,6)--(4,6)--(5,6)--(6,6)--(6,7)--(7,7);
\draw (0,0)node[mydot]{}
(7,7)node[mydot]{};
\draw
(1,1) node[entries]{$1$}
(2,2) node[entries]{$2$}
(3,3) node[entries]{$3$}
(4,4) node[entries]{$4$}
(5,5) node[entries]{$5$}
(6,6) node[entries]{$6$}
(7,7) node[entries]{$7$}
;
\end{tikzpicture}
\begin{tikzpicture}[scale=0.9]
\fill[lightgray](0,0)--(0,1)--(0,2)--(0,3)--(0,4)--(0,5)--(1,5)--(2,5)--(3,5)--(3,6)--(4,6)--(5,6)--(6,6)--(6,7)--(7,7)--(6,7)--(6,6)--(5,6)--(5,5)--(4,5)--(4,4)--(3,4)--(3,3)--(2,3)--(2,2)--(1,2)--(1,1)--(0,1);
\draw[step=1em,gray](0,0) grid (7,7);
\draw[thickLine](0,0)--(0,1)--(0,2)--(0,3)--(0,4)--(0,5)--(1,5)--(2,5)--(3,5)--(3,6)--(4,6)--(5,6)--(6,6)--(6,7)--(7,7);
\draw (0,0)node[mydot]{}
(7,7)node[mydot]{};
\draw
(1,1) node[entries]{$1$}
(2,2) node[entries]{$2$}
(3,3) node[entries]{$3$}
(4,4) node[entries]{$4$}
(5,5) node[entries]{$5$}
(6,6) node[entries]{$6$}
(7,7) node[entries]{$7$}
;
\end{tikzpicture}
\begin{tikzpicture}[scale=0.9]
\fill[lightgray](0,0)--(0,1)--(0,2)--(0,3)--(0,4)--(0,5)--(0,6)--(1,6)--(2,6)--(3,6)--(4,6)--(5,6)--(6,6)--(6,7)--(7,7)--(6,7)--(6,6)--(5,6)--(5,5)--(4,5)--(4,4)--(3,4)--(3,3)--(2,3)--(2,2)--(1,2)--(1,1)--(0,1);
\draw[step=1em,gray](0,0) grid (7,7);
\draw[thickLine](0,0)--(0,1)--(0,2)--(0,3)--(0,4)--(0,5)--(0,6)--(1,6)--(2,6)--(3,6)--(4,6)--(5,6)--(6,6)--(6,7)--(7,7);
\draw (0,0)node[mydot]{}
(7,7)node[mydot]{};
\draw
(1,1) node[entries]{$1$}
(2,2) node[entries]{$2$}
(3,3) node[entries]{$3$}
(4,4) node[entries]{$4$}
(5,5) node[entries]{$5$}
(6,6) node[entries]{$6$}
(7,7) node[entries]{$7$}
;
\end{tikzpicture}
\begin{tikzpicture}[scale=0.9]
\fill[lightgray](0,0)--(0,1)--(0,2)--(0,3)--(1,3)--(2,3)--(3,3)--(3,4)--(3,5)--(3,6)--(3,7)--(4,7)--(5,7)--(6,7)--(7,7)--(6,7)--(6,6)--(5,6)--(5,5)--(4,5)--(4,4)--(3,4)--(3,3)--(2,3)--(2,2)--(1,2)--(1,1)--(0,1);
\draw[step=1em,gray](0,0) grid (7,7);
\draw[thickLine](0,0)--(0,1)--(0,2)--(0,3)--(1,3)--(2,3)--(3,3)--(3,4)--(3,5)--(3,6)--(3,7)--(4,7)--(5,7)--(6,7)--(7,7);
\draw (0,0)node[mydot]{}
(7,7)node[mydot]{};
\draw
(1,1) node[entries]{$1$}
(2,2) node[entries]{$2$}
(3,3) node[entries]{$3$}
(4,4) node[entries]{$4$}
(5,5) node[entries]{$5$}
(6,6) node[entries]{$6$}
(7,7) node[entries]{$7$}
;
\end{tikzpicture}
\]
\[
\begin{tikzpicture}[scale=0.9]
\fill[lightgray](0,0)--(0,1)--(0,2)--(0,3)--(0,4)--(1,4)--(2,4)--(3,4)--(3,5)--(3,6)--(3,7)--(4,7)--(5,7)--(6,7)--(7,7)--(6,7)--(6,6)--(5,6)--(5,5)--(4,5)--(4,4)--(3,4)--(3,3)--(2,3)--(2,2)--(1,2)--(1,1)--(0,1);
\draw[step=1em,gray](0,0) grid (7,7);
\draw[thickLine](0,0)--(0,1)--(0,2)--(0,3)--(0,4)--(1,4)--(2,4)--(3,4)--(3,5)--(3,6)--(3,7)--(4,7)--(5,7)--(6,7)--(7,7);
\draw (0,0)node[mydot]{}
(7,7)node[mydot]{};
\draw
(1,1) node[entries]{$1$}
(2,2) node[entries]{$2$}
(3,3) node[entries]{$3$}
(4,4) node[entries]{$4$}
(5,5) node[entries]{$5$}
(6,6) node[entries]{$6$}
(7,7) node[entries]{$7$}
;
\end{tikzpicture}
\begin{tikzpicture}[scale=0.9]
\fill[lightgray](0,0)--(0,1)--(0,2)--(0,3)--(0,4)--(0,5)--(1,5)--(2,5)--(3,5)--(3,6)--(3,7)--(4,7)--(5,7)--(6,7)--(7,7)--(6,7)--(6,6)--(5,6)--(5,5)--(4,5)--(4,4)--(3,4)--(3,3)--(2,3)--(2,2)--(1,2)--(1,1)--(0,1);
\draw[step=1em,gray](0,0) grid (7,7);
\draw[thickLine](0,0)--(0,1)--(0,2)--(0,3)--(0,4)--(0,5)--(1,5)--(2,5)--(3,5)--(3,6)--(3,7)--(4,7)--(5,7)--(6,7)--(7,7);
\draw (0,0)node[mydot]{}
(7,7)node[mydot]{};
\draw
(1,1) node[entries]{$1$}
(2,2) node[entries]{$2$}
(3,3) node[entries]{$3$}
(4,4) node[entries]{$4$}
(5,5) node[entries]{$5$}
(6,6) node[entries]{$6$}
(7,7) node[entries]{$7$}
;
\end{tikzpicture}
\begin{tikzpicture}[scale=0.9]
\fill[lightgray](0,0)--(0,1)--(0,2)--(0,3)--(0,4)--(0,5)--(0,6)--(1,6)--(2,6)--(3,6)--(3,7)--(4,7)--(5,7)--(6,7)--(7,7)--(6,7)--(6,6)--(5,6)--(5,5)--(4,5)--(4,4)--(3,4)--(3,3)--(2,3)--(2,2)--(1,2)--(1,1)--(0,1);
\draw[step=1em,gray](0,0) grid (7,7);
\draw[thickLine](0,0)--(0,1)--(0,2)--(0,3)--(0,4)--(0,5)--(0,6)--(1,6)--(2,6)--(3,6)--(3,7)--(4,7)--(5,7)--(6,7)--(7,7);
\draw (0,0)node[mydot]{}
(7,7)node[mydot]{};
\draw
(1,1) node[entries]{$1$}
(2,2) node[entries]{$2$}
(3,3) node[entries]{$3$}
(4,4) node[entries]{$4$}
(5,5) node[entries]{$5$}
(6,6) node[entries]{$6$}
(7,7) node[entries]{$7$}
;
\end{tikzpicture}
\begin{tikzpicture}[scale=0.9]
\fill[lightgray](0,0)--(0,1)--(0,2)--(0,3)--(0,4)--(0,5)--(0,6)--(0,7)--(1,7)--(2,7)--(3,7)--(4,7)--(5,7)--(6,7)--(7,7)--(6,7)--(6,6)--(5,6)--(5,5)--(4,5)--(4,4)--(3,4)--(3,3)--(2,3)--(2,2)--(1,2)--(1,1)--(0,1);
\draw[step=1em,gray](0,0) grid (7,7);
\draw[thickLine](0,0)--(0,1)--(0,2)--(0,3)--(0,4)--(0,5)--(0,6)--(0,7)--(1,7)--(2,7)--(3,7)--(4,7)--(5,7)--(6,7)--(7,7);
\draw (0,0)node[mydot]{}
(7,7)node[mydot]{};
\draw
(1,1) node[entries]{$1$}
(2,2) node[entries]{$2$}
(3,3) node[entries]{$3$}
(4,4) node[entries]{$4$}
(5,5) node[entries]{$5$}
(6,6) node[entries]{$6$}
(7,7) node[entries]{$7$}
;
\end{tikzpicture}
\]
\end{example}

\begin{theorem}
Let $r_{k}(n) \coloneqq \rrp_C(n,k)$. Then
\begin{equation}
 r_{k}(n+1) = \sum_{\substack{0\leq j \leq n \\ k \mid j}} r_{k}(j) r_{k}(n-j),\qquad r_k(0)\coloneqq 1.
\end{equation}
\end{theorem}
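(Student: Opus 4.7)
The plan is to prove this recursion bijectively, by establishing a length-preserving bijection
\[
\RRP_C(n+1,k) \;\longleftrightarrow\; \bigsqcup_{\substack{0 \le j \le n \\ k \mid j}} \RRP_C(j,k) \times \RRP_C(n-j,k).
\]
Throughout, I would identify an element of $\RRP_C(n,k)$ with its subexcedant function, namely a Catalan word $f=(f_1,\dots,f_n)$ satisfying $f_i \equiv i \pmod k$. The one conceptual observation driving everything is that the value $1$ can only appear in such an $f$ at positions $i$ with $i \equiv 1 \pmod k$, since $f_i = 1$ immediately forces $i \equiv 1 \pmod k$. Given $f \in \RRP_C(n+1,k)$, I then set $p \coloneqq \max\{i \in [n+1] : f_i = 1\}$, which is well-defined because $f_1=1$, and I let $j \coloneqq p-1$; by the observation, $j$ is automatically a multiple of $k$, producing exactly the summation range in the recursion.

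Next, I would split $f$ into
\[
f^{(1)} \coloneqq (f_1,\dots,f_{p-1}) \quad\text{and}\quad f^{(2)} \coloneqq (f_{p+1}-1,\dots,f_{n+1}-1),
\]
of lengths $j$ and $n-j$. The piece $f^{(1)}$ is a prefix of a Catalan word, and the mod-$k$ condition passes through unchanged, so $f^{(1)} \in \RRP_C(j,k)$. For $f^{(2)}$, the maximality of $p$ forces $f_i \ge 2$ for $i>p$, and combined with $f_{p+1}\le f_p+1=2$ this yields $f_{p+1}=2$; hence $f^{(2)}_1=1$ and the Catalan inequalities on $f$ transfer directly to $f^{(2)}$. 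The mod-$k$ condition follows from $(f_{p+i}-1)\equiv (p-1)+i \equiv i \pmod k$, using $k\mid(p-1)$.

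The inverse map sends $(g^{(1)},g^{(2)}) \in \RRP_C(j,k)\times \RRP_C(n-j,k)$ with $k\mid j$ to the concatenation $(g^{(1)}_1,\dots,g^{(1)}_j,\,1,\,g^{(2)}_1+1,\dots,g^{(2)}_{n-j}+1)$. The verifications—the Catalan property, the mod-$k$ condition (where $k\mid j$ is needed precisely so that the inserted $1$ sits at a position congruent to $1 \pmod k$), and the fact that this inserted $1$ is indeed the \emph{last} occurrence of $1$ (which holds because the tail entries $g^{(2)}_i+1$ are all $\ge 2$)—are all short and direct. The two maps are manifestly mutual inverses, and summing cardinalities yields the recursion. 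The only, very mild, obstacle is simply the bookkeeping of checking every condition on both sides; the conceptual content is entirely captured by the compatibility between the ``last return to $1$'' decomposition of a Catalan word and the mod-$k$ alternation. This argument is in fact a direct analogue of the decomposition used in \cref{prop:213Recursion}, where $1$ plays the same structural role for $213$-avoiding permutations as it does here for Catalan words.
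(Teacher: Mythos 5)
Your proof is correct and follows essentially the same route as the paper: both decompose the Catalan word $f_\pi$ at the last occurrence of the value $1$, observe that this position forces $k \mid j$, and split into a prefix in $\RRP_C(j,k)$ and a shifted suffix in $\RRP_C(n-j,k)$. Your version simply spells out the verifications (maximality forcing $f_{p+1}=2$, the mod-$k$ shift, and the explicit inverse map) that the paper leaves implicit.
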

\begin{proof}
 Let $\pi$ be in $\RRP_C(n+1,k)$. Consider the last 1 in $f_\pi$. Then we have
 \begin{align*}
 f_\pi=\gamma_j,1,\gamma_{n-j}
 \end{align*}
 where $\gamma_j$ is the sub word having length $j$. 
 Clearly, $j$ is a multiple of $k$ and $0\leq j\leq n$. 
 Finally $\gamma_j\in\RRP_C(j,k)$ and $\gamma_{n-j}{-}1\in\RRP_C(n-j,k)$, since $\gamma_{n-j}$ starts with 2.
\end{proof}

In the same manner as the proof of \cref{cor:mainFussCatalan}, 
we have the following corollary.
\begin{corollary}\label{cor:subexFuss}
Let $m$ and $j$ be defined via $n=km+j$ with $0 \leq j < k$.
Then
\[
\rrp_C(n,k) =  \frac{j+1}{km+j+1} \binom{ (k+1)m + j }{km+j}
\]
and in particular
\[
\rrp_C(km,k) = \frac{1}{(k+1)m+1} \binom{(k+1)m+1}{m}
	= \frac{1}{km+1} \binom{(k+1)m}{m}.
\]
\end{corollary}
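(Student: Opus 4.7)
The plan is to observe that \cref{cor:subexFuss} is an immediate consequence of the preceding theorem (the recursion for $r_k(n) \coloneqq \rrp_C(n,k)$) combined with \cref{prop:generalFormula}. Indeed, the recursion just established for $r_k(n)$ is \emph{literally identical} to the recursion \eqref{eq:akrecursion} defining the sequence $\{a_k(n)\}_{n\geq 0}$, with the same base case $r_k(0) = a_k(0) = 1$. A trivial induction on $n$ therefore yields $r_k(n) = a_k(n)$ for all $n \geq 0$.

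Once this identification is made, the closed-form formula for $\rrp_C(n,k) = a_k(n)$ is exactly the content of \eqref{eq:closedForm} in \cref{prop:generalFormula}, namely
\[
 a_k(km+j) = \frac{j+1}{km+j+1}\binom{(k+1)m+j}{km+j},
\]
and the specialization $j=0$ produces the second displayed equation of the corollary, using the standard identity $\frac{1}{(k+1)m+1}\binom{(k+1)m+1}{m} = \frac{1}{km+1}\binom{(k+1)m}{m}$ recorded in the definition of $C_n^{k+1}$.

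There is no substantive obstacle here; the work has all been done upstream. The only thing worth pointing out explicitly is that \cref{cor:akGivesFussCat} lets us phrase the result in terms of Raney numbers (and Fuss--Catalan numbers in the case $k \mid n$), so the corollary can simply be stated as an appeal to the two earlier results: the recursion theorem identifies $r_k$ with $a_k$, and \cref{prop:generalFormula} supplies the formula. A one-sentence proof of the form ``This follows from the preceding theorem and \cref{prop:generalFormula}, exactly as in the proof of \cref{cor:mainFussCatalan}'' is all that is required.
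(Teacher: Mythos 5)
Your proposal is correct and matches the paper exactly: the paper likewise deduces the corollary by noting that the recursion for $r_k(n)=\rrp_C(n,k)$ coincides with \eqref{eq:akrecursion} and then invoking \cref{prop:generalFormula}, "in the same manner as the proof of \cref{cor:mainFussCatalan}." Nothing further is needed.
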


\section{Mod-k-alternating permutations avoiding two patterns}\label{sec:2patterns}

The systematic study of permutations avoiding two patterns of length 3,
was completed by Simion and Schmidt~\cite{SimionSchmidt1985}.
In this section we do the same for parity-alternating permutations 
and mod-$k$-alternating permutations.

Due to \cref{prop:onlyIdentityStronger},
the situation for $k \geq 3$ is in general simpler than the $k=2$ case.
In \cref{tab:2patterns}, we present an overview of our results in 
the $k=2$ case. We consider the sequence
$\{ \pap_{\sigma,\tau}(n) \}_{n\geq 1}$ and separate it into the cases $n=2m$ and $n=2m+1$.
Closed-form formulas for the 
sequences $\{\pap_{\sigma,\tau}(2m) \}_{m\geq 1}$ and $\{ \pap_{\sigma,\tau}(2m+1) \}_{m\geq 0}$
where $\sigma,\tau \in \{123,132,213,231,312,321\}$
are shown in the table.

The bold letters refer to cases which are treated below.
Note that \cref{lem:modkAltRevFlip,lem:modkInverse} can easily be generalized 
to sets of permutations avoiding two or more patterns,
e.g.~$\pap_{\sigma,\tau}(n,k) = \pap_{\revflip(\sigma),\revflip(\tau)}(n,k)$.
Cases marked with the same letter indicate situations which 
give the same enumeration after applying these lemmas.
We emphasize that the formulas are only valid for permutations of length at least $1$.

	\begin{table}[!ht]
		\begin{tabular}{lllllll}
			& \textbf{123} & \textbf{132} & \textbf{213} & \textbf{231} & \textbf{312} & \textbf{321}\\
			\midrule
			\textbf{123} &* &   &  & \\
			\textbf{132} & \textbf{A}; $2^{m-1}$, 1 & *  &  & \\
			 \textbf{213} & \textbf{A}; $2^{m-1}$, 1 &  \textbf{D}; $2^{m-1}$, $F_{2m+1}$ & * & \\
			\textbf{231} & \textbf{B}; $m$, $\binom{m}{2}{+}1$ &  \textbf{E}; $2^{m-1}$, $2^{m}$ & \textbf{E}; $2^{m-1}$, $2^{m}$ & *  &\\
			\textbf{312} & \textbf{B}; $m$, $\binom{m}{2}{+}1$ &  \textbf{E}; $2^{m-1}$, $2^{m}$ & \textbf{E}; $2^{m-1}$, $2^{m}$ & \textbf{F}; $F_{n}$ & * & \\
			\textbf{321} & \textbf{C}\textsuperscript{\textdagger}; 0 &   \textbf{G}; $\binom{m}{2}{+}1$ &  \textbf{G};  $\binom{m}{2}{+}1$ & \textbf{I}\textsuperscript{\ddag}; 1  & \textbf{I}; 1 & *\\
		\end{tabular}
		\caption{Pair of formulas in an entry represent the cases $n=2m$ and $n=2m+1$, respectively.
			A single formula covers to both cases.
		    The Fibonacci numbers are denoted by $F_n$, with $F_1=F_2=1$.}\label{tab:2patterns}
	\end{table}
\begin{remark}
	\textsuperscript{\textdagger}There are no permutations of length 5 or more that avoid $123$ and $321$ and \textsuperscript{\ddag}we only get the identity permutation that avoids $231$ and $321$  due to \cref{prop:onlyIdentityStronger} or (in the case $k=2$) a simple argument.
\end{remark}
Only the cases A, D and G are interesting for $k \geq 3$,
since otherwise, at least one pattern is covered by \cref{prop:onlyIdentityStronger},
and we then have $\rrp_{\sigma,\tau}(n,k)=1$ for all $n$.
The general formula for $\rrp_{\sigma,\tau}(n,k)$ in cases A, D and G
is given in the corresponding subsection.

\subsection{Case A (132,123)}
\begin{lemma}\label{lem:123-132general}
	Any permutation in $\symS_{132,123}(n)$ starts with at least $n-1$.
\end{lemma}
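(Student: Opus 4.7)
The plan is to argue by contradiction: suppose $\pi \in \symS_{132,123}(n)$ with $\pi_1 = a \leq n-2$, and derive a forbidden pattern involving $a$ together with the values $n-1$ and $n$.

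Since $a \leq n-2$, both values $n-1$ and $n$ appear somewhere among $\pi_2, \pi_3, \dotsc, \pi_n$. I would split into two cases according to their relative positions. If $n-1$ occurs before $n$ in $\pi$, then the subsequence $(a, n-1, n)$ is increasing, hence forms a $123$ pattern, contradicting $123$-avoidance. If instead $n$ occurs before $n-1$, then in the subsequence $(a, n, n-1)$ we have $a < n-1 < n$, so the relative order is smallest/largest/middle, which is exactly the $132$ pattern, contradicting $132$-avoidance.

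Either case yields a contradiction, so we must have $\pi_1 \geq n-1$, as claimed. I do not anticipate a genuine obstacle here: the lemma is a routine consequence of picking the two largest available values and observing that their relative order forces one of the two forbidden patterns together with $\pi_1$. The only care needed is verifying the pattern type in the second case, i.e.\ confirming that $a < n-1 < n$ with positions $1 < i < j$ places $a, n, n-1$ in the order $1, 3, 2$.
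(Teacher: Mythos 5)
Your argument is correct and is essentially identical to the paper's proof: both take $\pi_1 \leq n-2$ and observe that the subsequence formed by $\pi_1$, $n-1$, $n$ yields a $123$ pattern if $n-1$ precedes $n$ and a $132$ pattern otherwise. Nothing further is needed.
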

\begin{proof}
	Suppose $\pi\in\symS_{132,123}(n)$ and $\pi(1)<n-1$. Then $\pi$ contains either a $123$ pattern, $\pi(1)\;n{-1}\;n$, or a $132$ pattern, $\pi(1)\;n\;n{-1}$. Hence, $\pi(1)\geq n-1$.
\end{proof}

\begin{proposition}
For $n$ odd, $\PAP_{132,123}(n)$ consists only of the permutation
$[n,n-1,n-2,\dotsc,2,1]$. 
When $n=2m$, we have that $\pap_{132,123}(2m) = 2^{m-1}$.
\end{proposition}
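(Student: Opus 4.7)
The plan is to induct on $n$, handling the odd and even cases separately with Lemma \ref{lem:123-132general} as the engine. The key observation is that whenever $\pi(1)$ equals $n$ or $n-1$, that entry cannot participate in a $123$ or $132$ pattern (it is either the maximum or tied for second-maximum, while both forbidden patterns place their ``$3$'' strictly after position $1$). Consequently, avoidance of $\{132,123\}$ for $\pi$ is equivalent to avoidance for the rank-relabeling of the sub-word $[\pi(2),\dotsc,\pi(n)]\in\symS_{132,123}(n-1)$, and Lemma \ref{lem:123-132general} can be applied iteratively.

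For $n$ odd, Lemma \ref{lem:123-132general} together with the parity constraint $\pi(1)\equiv 1\pmod 2$ forces $\pi(1)=n$. Reapplying the lemma to the rank-relabeled sub-word yields $\pi(2)\in\{n-2,n-1\}$, and the parity constraint at position $2$ forces $\pi(2)=n-1$. After shifting indices by two, the tail $[\pi(3),\dotsc,\pi(n)]$ becomes a PAP of length $n-2$ (still odd) avoiding both patterns, which by induction must be $[n-2,n-3,\dotsc,1]$; thus $\pi=[n,n-1,\dotsc,1]$.

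For $n=2m$, the same reasoning forces $\pi(1)=n-1$, and the lemma applied to the sub-word on values $\{1,\dotsc,n-2,n\}$ gives $\pi(2)\in\{n-2,n\}$, both even and hence admissible. In the branch $\pi(2)=n$, the remaining entries form a PAP of length $n-2$ on $[n-2]$ after an index shift by two, contributing $\pap_{132,123}(2(m-1))=2^{m-2}$ permutations by induction. In the branch $\pi(2)=n-2$, the remaining values lie in $\{1,\dotsc,n-3,n\}$; relabeling $n\mapsto n-2$ is order-preserving (so pattern avoidance is preserved), and the index shift produces another PAP of length $n-2$ avoiding both patterns, giving a further $2^{m-2}$ possibilities. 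Summing yields $2\cdot 2^{m-2}=2^{m-1}$, with the base case $m=1$ treated directly (only the $\pi(2)=n$ branch exists, since $n-2=0$ is not a value), giving $\pap_{132,123}(2)=1=2^{0}$.

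The main point requiring care is the parity bookkeeping in the $\pi(2)=n-2$ branch: the relabeling $n\mapsto n-2$ closes the induction into a genuinely smaller PAP problem only because $n$ and $n-2$ share parity, so the value occupying each position retains its original parity and the PAP structure is preserved.
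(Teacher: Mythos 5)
Your proof is correct and takes essentially the same approach as the paper: \cref{lem:123-132general} combined with the parity constraint pins down $\pi(1)$ (and $\pi(2)$), and in the even case the two admissible values of $\pi(2)$ give the doubling recursion $\pap_{132,123}(2m)=2\,\pap_{132,123}(2m-2)$. The only cosmetic difference is direction: the paper builds length-$(2m{+}2)$ permutations bottom-up by the two insertions in \eqref{eq:caseAperms}, while you peel the first two entries off top-down (with the order- and parity-preserving relabeling $n\mapsto n-2$); these are inverse descriptions of the same bijection.
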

\begin{proof}
Suppose $\pi \in \PAP_{132,123}(n)$, for odd $n$. From \cref{lem:123-132general}, $\pi$ starts with $n$ and then the rest of the permutation must follow the same rule inductively. Hence, $\pi$ is the unique permutation $[n,n-1,n-2,\dotsc,2,1]$.

\medskip 

For the second claim, suppose that
\[
  \pi = [ \alpha, 2m, \beta ] \in \PAP_{132,123}(2m).
\]
Then it is easy to verify that the permutations 
\begin{equation}\label{eq:caseAperms}
 [2m+1, 2m+2, \alpha, 2m, \beta ] \text{ and }
 [2m+1, 2m,   \alpha, 2m+2, \beta ] 
\end{equation}
are elements in $\PAP_{132,123}(2m+2)$.

Now, for any $\pi' \in \PAP_{132,123}(2m+2)$, we have that $2m+1$ is in the very first position, from \cref{lem:123-132general}. In a similar manner, we must have that $\pi'(2) \in \{ 2m, 2m+2\}$.
We have now showed that all elements in $\PAP_{132,123}(2m+2)$
are in one of the forms described in \eqref{eq:caseAperms}.
By an inductive argument, it follows that $\pap_{132,123}(2m) = 2^{m-1}$.
\end{proof}

\begin{proposition}
Let $k\geq 3$ and write $n = km+l$, where $0 \leq l < k$.
We then have that
	\[
	\rrp_{132,123}(km+l,k)=
	\begin{cases}
	  1 &\text{ if $(k,l)\in\{(3,1),(3,2),(4,2)\}$}, \\
	  0 & \text{ otherwise}.
	\end{cases}
\]
\end{proposition}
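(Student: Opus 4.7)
The plan is to generalize to the two-parameter quantity $f_k(n,r) \coloneqq \rrp_{132,123}(n,k,r)$, derive a two-term recursion in $(n,r)$, and trace it for each $l$ to show that only the three advertised $(k,l)$-families have $f_k(n,1) \neq 0$ when $m \geq 1$, in which case the value is $1$.

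By \cref{lem:123-132general}, every $\pi \in \symS_{132,123}(n)$ with $n \geq 2$ starts with $\pi(1) \in \{n-1,n\}$. In Case~1, $\pi(1) = n$ requires $n \equiv r \pmod{k}$, and deleting $n$ and shifting indices gives a bijection onto $\RRP_{132,123}(n-1,k,r+1)$. In Case~2, $\pi(1) = n-1$ requires $n \equiv r+1 \pmod{k}$; writing $\pi(j) = n$, joint $132$- and $123$-avoidance forces the entries at positions $2,\dots,j-1$ to be the strictly decreasing sequence $n-2,n-3,\dots,n-j+1$, and the suffix to permute $\{1,\dots,n-j\}$ while avoiding both patterns. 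Imposing $\pi(i) \equiv i+r-1 \pmod{k}$ on the middle block together with $\pi(1) \equiv r$ yields $2(i-1) \equiv 0 \pmod{k}$ for every $i \in \{2,\dots,j-1\}$, which for $k \geq 3$ is consistent only when this range is empty, i.e.\ $j=2$. Reindexing positions $3,\dots,n$ then gives a bijection onto $\RRP_{132,123}(n-2,k,r+2)$. Combining the two cases produces the recursion
\[
 f_k(n,r) = [n\equiv r \pmod{k}]\, f_k(n-1,r+1) + [n\geq 2 \text{ and } n\equiv r+1 \pmod{k}]\, f_k(n-2,r+2),
\]
where $r$ is read cyclically in $\{1,\dots,k\}$, with $f_k(0,r) = 1$, $f_k(1,1) = 1$ and $f_k(1,r) = 0$ for $r \neq 1$.

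Both allowed transitions preserve the invariant $s \coloneqq (n+r) \bmod k$, so starting from $(n,1)$ with $n = km + l$ the orbit stays in the class $s \equiv l+1 \pmod{k}$. I would then case-split on $l$. For $l \notin \{1,2\}$, neither indicator fires at $(n,1)$, so $f_k(n,1) = 0$. For $l = 1$, Case~1 moves to $(n-1,2)$ where $n-1 \equiv 0 \pmod{k}$; at that state Case~1 would demand $0 \equiv 2 \pmod{k}$ (impossible for $k \geq 3$) and Case~2 would demand $0 \equiv 3 \pmod{k}$, i.e.\ $k = 3$. When $k = 3$ this returns to $(n-3,1)$ with $l=1$, and induction on $m$ yields $f_3(3m+1,1)=1$; when $k \geq 4$ the recursion dies and $f_k(n,1)=0$ for $m \geq 1$. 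For $l = 2$, Case~2 moves to $(n-2,3)$ where $n-2 \equiv 0 \pmod{k}$; here Case~1 demands $k = 3$ and Case~2 demands $k = 4$, and in either of these two subcases the orbit returns to the same $l = 2$ with $n$ decreased by $3$ or $4$ respectively, so induction yields $f_3(3m+2,1) = f_4(4m+2,1) = 1$; for $k \geq 5$ the recursion dies and $f_k(n,1)=0$.

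The main obstacle is the Case~2 analysis: one must see that it is the $123$-avoidance (rather than just the $132$-avoidance) that pins the middle block to be strictly decreasing, and then extract from the resulting congruences the strong conclusion $j=2$ for every $k \geq 3$. Once this rigidity is in place, each residue class admits at most two live transitions per step, and the orbit trace above is a short mechanical case check.
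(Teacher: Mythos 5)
Your proof is correct. It reaches the paper's conclusion from the opposite end of the permutation: the paper anchors on the \emph{minimum}, observing that $1$ must occupy the last or penultimate position (which yields $l\in\{1,2\}$ in one stroke), and then shows the suffix is the rigid block $2,3,1$, or $3,1,2$, or $3,4,1,2$, peeling off $k$ entries per induction step while staying inside $\RRP_{132,123}(\cdot,k)$ with starting residue $1$ throughout. You instead anchor on the \emph{maximum} via \cref{lem:123-132general}, peel one or two entries off the front, and absorb the shifting residue into the two-parameter quantity $f_k(n,r)=\rrp_{132,123}(n,k,r)$; the invariant $(n+r)\bmod k$ then performs the case analysis that the paper does by hand. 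Your key rigidity step is sound: the block between $\pi(1)=n-1$ and $n$ must be the consecutive decreasing run $n-2,\dotsc,n-j+1$, and the congruences $2(i-1)\equiv 0\pmod k$ already fail at $i=2$ for $k\geq 3$, forcing $j=2$; the orbit trace then recovers exactly the three surviving families and the same unique permutations as the paper. What your route buys is the full table of $\rrp_{132,123}(n,k,r)$ for every starting residue $r$ and a recursion that iterates mechanically; what the paper's route buys is brevity, since the constraint $l\in\{1,2\}$ drops out immediately from the position of $1$ with no auxiliary parameter. One shared caveat: both arguments (and the statement itself) require $m\geq 1$, since for $n\leq 2$ the trivial permutations make the count $1$ even outside the three listed families --- you at least flag this restriction explicitly.
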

\begin{proof}
Suppose $\pi\in \RRP_{132,123}(n,k)$ and consider the position of $1$ in $\pi$. 
Since $\pi$ avoids both $132$ and $123$, we must have that $1$ is in the last, or penultimate position.
Consequently, $l \in \{1,2\}$, and we shall treat these 
two cases separately.

\medskip 
\noindent 
\textbf{Case $l=1$:}
Since $k \geq 3$, $2$ must appear to the left of $1$, with at least some entry between.
Moreover, since $\pi$ must avoid both $132$ and $123$, there cannot be 
more than one entry between $2$ and $1$, which implies that $k=3$.
The same reasoning as above shows that $3$ must be placed between $2$ and $1$,
so that $\pi$ ends with $231$.
We may now treat the first $n-3$ entries in $\pi$ as an element in $\RRP_{132,123}(n-3,k)$,
and by induction conclude that $\pi$ is the unique permutation
\[
\pi=[3m{+}1, \,\,\, 3m{-}1,3m,3m{-}2, \,\, \dotsc, \,\,8,9,7, \,\, 5,6,4, \,\,2,3,1],
\]
where the largest entry is first, and the remaining entries appear in blocks of $3$.

\medskip 
\noindent 
\textbf{Case $l=2$:} It is straightforward to show that 
we must have $2$ at the last position of $\pi$, in order 
to avoid the two patterns.
Now, a similar argument as above shows that 
\[
  \pi = [ \dotsc, 3,1,2] \text{ or } \pi = [ \dotsc, 3,j,1,2]
\]
where $4 \leq j \leq n$. The first option gives $k=3$,
while the second option forces $k=4$ and $j=4$.

In case $k=3$, we have that $\pi$ ends with $3,1,2$,
so an inductive argument then gives
\[
 \pi = [3m{+}1, 3m{+}2, \,\,\, 3m,3m{-}2,3m{-}1, \,\, \dotsc, \,\,  9,7,8, \,\,  6,4,5, \,\,3,1,2].
\]
Similarly, the case $k=4$ gives 
\[
 \pi = [4m{+}1, 4m{+}2, \,\,\, 4m{-}1,4m,4m{-}3,4m{-}2, \,\, \dotsc, \,\,  7,8,5,6, \,\,3,4,1,2].
\]
\end{proof}

\subsection{Case B (231,123)}

For $n$ odd, we have $\pap_{321,132}(n)=\pap_{231,123}(n)$
because the reversal map (\cref{lem:revBij}) gives a bijection between the sets.
The latter set is enumerated further down in Case~G.

The case of even $n$ is covered by the following lemma.
\begin{lemma}
Suppose $\pi \in \PAP_{231,123}(2m)$.
Then
\[
  \pi = [k, k-1, k-2,\dotsc,1,2m,2m-1,\dotsc,k+1]
\]
for some odd $k$. Consequently, $\pap_{231,123}(2m) = m$.
\end{lemma}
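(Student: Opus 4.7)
The plan is to first describe the structure of every permutation in $\symS_{123,231}(n)$ via the position of the maximum entry, and then intersect with the parity-alternating condition.

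Let $\pi\in\PAP_{231,123}(2m)$ and let $j$ denote the position of $n=2m$ in $\pi$. Since $\pi_1$ must be odd while $n$ is even, we immediately rule out $j=1$, so $j\geq 2$.

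First I would use $231$-avoidance at the triple $(\pi_i,n,\pi_l)$ to show that for any $i<j<l$ one has $\pi_l>\pi_i$; otherwise $\pi_l<\pi_i<n$ produces a $231$-pattern. Thus every entry to the right of $n$ exceeds every entry to its left. A size count then forces the positions $1,\dotsc,j-1$ to carry exactly $\{1,\dotsc,j-1\}$, and the positions $j+1,\dotsc,n$ to carry exactly $\{j,\dotsc,n-1\}$.

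Next I would use $123$-avoidance: an ascent $\pi_a<\pi_b$ inside the right block combines with any left entry $\pi_i$ to give $\pi_i<\pi_a<\pi_b$, a forbidden $123$; symmetrically, an ascent inside the left block combines with any right entry to produce $123$. Hence both blocks are strictly decreasing, and we obtain
\[
\pi=[j-1,j-2,\dotsc,1,\;2m,2m-1,\dotsc,j].
\]
Setting $k=j-1$, the PAP starting-with-odd condition requires $k$ to be odd; a short check then verifies that with $k$ odd and $2m$ even, the parity of every entry matches the parity of its position throughout both the prefix and the suffix. Conversely, every permutation of this form with odd $k$ is a PAP and avoids both patterns: no ascents occur inside either decreasing block, and any cross-block triple is ruled out by the value-dominance structure (the right block lies entirely above the left).

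Counting the admissible values $k\in\{1,3,\dotsc,2m-1\}$ yields $\pap_{231,123}(2m)=m$.

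The main obstacle is the two-step cross-block argument: $231$-avoidance is used to partition the values between the two blocks, and then $123$-avoidance pins each block down to a strictly decreasing sequence. The $j=1$ case, which in a general $\symS_{123,231}(n)$ analysis would branch recursively and contribute extra permutations, is discarded cleanly here by the parity of~$n$.
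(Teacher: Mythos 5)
Your proof is correct and follows essentially the same route as the paper: use $231$-avoidance to force every entry right of $2m$ to exceed every entry to its left, use $123$-avoidance to force both blocks to be decreasing, and then count the odd choices of the first entry. The only microscopic point worth noting is that when $2m$ sits in the last position the ``right block'' is empty, so the ascent-in-the-left-block argument should use $2m$ itself as the third element of the $123$-pattern; this does not affect the conclusion.
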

\begin{proof}
Because $\pi$ is $231$-avoiding, all entries to the right of $2m$ must be larger than
all entries to its left. Moreover, it is then straightforward to show that
both these intervals must be decreasing in order for $\pi$ to avoid the pattern $123$.
There are exactly $m$ choices of the first entry in $\pi$---any odd number in $[2m]$.
From these observations, the lemma follows.
\end{proof}

\subsection{Case D (213,132)}

\begin{proposition}
For $k \geq 1$ and $0 \leq l <k$, we have that
\[
\rrp_{213,132}(km+l,k) =
\begin{cases}
2^{m-1} &\text{ if $l=0$}, \\
F_{2m+1}& \text{ otherwise}.
\end{cases}
\]
\end{proposition}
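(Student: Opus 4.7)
The plan is to iterate the structural decomposition of $(132,213)$-avoiding permutations, impose the mod-$k$-alternating constraint, and recognize the resulting recursion as one solved by odd-indexed Fibonacci numbers. Any $\pi \in \symS_{132,213}(n)$ has the form $\pi = [\alpha, n, \beta]$ with $\min(\alpha) > \max(\beta)$ from $132$-avoidance; additionally $213$-avoidance forces $\alpha$ to be increasing, since otherwise an inversion in $\alpha$ together with the position of $n$ would form a $213$-pattern. Hence $\alpha$ is the unique increasing consecutive run $[n-s, n-s+1, \dotsc, n-1]$ for some $s \in \{0,\dotsc,n-1\}$, and $\beta$ is a permutation of $[n-s-1]$ belonging to $\symS_{132,213}(n-s-1)$. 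Imposing $\pi(i) \equiv i \pmod k$ collapses the prefix constraint to $s \equiv n-1 \pmod k$, while $\beta$ inherits the shifted condition $\beta(i) \equiv s+1+i \pmod k$.

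Write $n = km + l$ with $0 \leq l < k$. The admissible values of $s$ are then $s = l-1 + jk$ for $j \in \{0,\dotsc,m\}$ when $l \geq 1$, or $s = jk-1$ for $j \in \{1,\dotsc,m\}$ when $l = 0$; in every case $|\beta| = k(m-j)$ is a multiple of $k$ and $\beta$ starts with residue $l+1 \pmod k$. For $l = 0$ the shift vanishes, $\beta$ is itself a mod-$k$-alternating $(132,213)$-avoider, and the recursion $\rrp_{213,132}(km,k) = \sum_{i=0}^{m-1}\rrp_{213,132}(ki,k)$ yields $2^{m-1}$ for $m \geq 1$ by a doubling argument.

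For $l \geq 1$, set $f_l(m) \coloneqq \rrp_{213,132}(km+l,k)$ and $g_r(t) \coloneqq \rrp_{213,132}(kt,k,r)$, where $r$ specifies the required residue of $\pi(1)\pmod k$. The main decomposition gives $f_l(m) = \sum_{t=0}^{m} g_{l+1}(t)$. I would then apply the same structural argument once more to $\RRP_{213,132}(kt,k,r)$ for $r \in \{2,\dotsc,k\}$; a short residue computation shows that the inner $\beta'$ reverts to the standard mod-$k$-alternating family (the shift $r+s$ is always divisible by $k$), yielding $g_r(t) = \sum_{t'=0}^{t-1} f_{r-1}(t')$ for $t\geq 1$ and $g_r(0)=1$. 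Substituting back produces the self-referential recursion
\[
f_l(m) = 1 + \sum_{t=1}^{m} \sum_{t'=0}^{t-1} f_l(t'),
\]
which depends only on $f_l$; in particular the sequence $(f_l(m))_m$ is the same for every $l \in \{1,\dotsc,k-1\}$.

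The main obstacle is recognizing the solution of this recursion as $F_{2m+1}$. Taking two successive differences gives $f_l(m) = 3\,f_l(m-1) - f_l(m-2)$, which together with the base cases $f_l(0) = 1 = F_1$ and $f_l(1) = 2 = F_3$ is precisely the recursion defining the odd-indexed Fibonacci numbers. Equivalently, one can close the argument by strong induction on $m$ using the classical identities $\sum_{i=0}^{t-1} F_{2i+1} = F_{2t}$ and $\sum_{t=1}^{m} F_{2t} = F_{2m+1} - 1$: the inductive hypothesis gives $g_{l+1}(t) = F_{2t}$ for all $t \leq m$, from which $f_l(m) = 1 + (F_{2m+1} - 1) = F_{2m+1}$ follows.
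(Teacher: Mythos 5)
Your proof is correct, and it reaches the paper's count by a recognizably different organization of the same structural idea. Both arguments rest on the fact that a $(132,213)$-avoider is an increasing run of consecutive top values, then the maximum, then a smaller instance; the paper exploits this by classifying three explicit insertion operations (appending the new top block right after the previous maximum, or prepending it in two ways) and matching the resulting recursion directly against the expanded identity $F_{2m+1}=2F_{2m-1}+F_{2m-3}+\dotsb+F_1$. You instead iterate the decomposition once more through the residue-shifted families, introducing $g_r(t)=\rrp_{213,132}(kt,k,r)$ and deriving the two-level recursion $f_l(m)=\sum_{t\le m} g_{l+1}(t)$ with $g_{l+1}(t)=\sum_{t'<t} f_l(t')$; unwinding this gives $f_l(m)=2f_l(m-1)+\sum_{i\le m-2}f_l(i)$, which is exactly the recursion the paper obtains, and your second-difference reformulation $f_l(m)=3f_l(m-1)-f_l(m-2)$ (or the summation identities $\sum_{i<t}F_{2i+1}=F_{2t}$ and $\sum_{t\le m}F_{2t}=F_{2m+1}-1$) is a clean alternative way to recognize the odd-indexed Fibonacci numbers. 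Your route avoids the paper's somewhat ad hoc case split and has the small bonus of also establishing $\rrp_{213,132}(kt,k,r)=F_{2t}$ for $r\ne 1$, a refinement the paper does not record; the $l=0$ doubling argument is essentially identical in both treatments.
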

\begin{proof}
For $k=1$, this reduces to proving that $\symS_{213,132}(m)=2^{m-1}$.
This has been done already by R.~Simion and F.~Schmidt~\cite{SimionSchmidt1985}. 
In the remaining part of the proof, we treat the case $k\geq 2$.
\medskip

\noindent
\textbf{Case $l \geq 1$}. 
From the recursive formula $F_{2m+1}=F_{2m}+F_{2m-1}$ of the Fibonacci numbers,
we can easily deduce the following recursive formula 
involving only the odd indexed Fibonacci numbers:
\begin{align}\label{eq:oddFibonacci}
	F_{2m+1}=2F_{2(m-1)+1}+F_{2(m-2)+1}+F_{2(m-3)+1}+\dotsb + F_{3}+F_1.
\end{align}
Our goal is to show the same recursive structure for elements in $\RRP_{213,132}(km+l,k)$.
Suppose $\pi \in \RRP_{213,132}(km+l,k)$. 
We claim that $\pi$ can be obtained in one of the following three ways.
\begin{enumerate}
\item[(1a)] From $\pi' \in \RRP_{213,132}(k(m-1)+l,k)$
		 by inserting the consecutive integers
		\[
		k(m{-}1){+}l{+}1,\,\,k(m{-}1){+}l{+}2,\dotsc, km{+}l{-}1, \,\,km{+}l
		\]
	 in this order \emph{immediately after} the largest entry in $\pi'$.
	 
	 \item[(1b)] 
	 From $\pi' \in \RRP_{213,132}(k(m-1)+l,k)$, by inserting
	 the two sequences of consecutive integers
	 \[
	 k(m{-}1){+}l{+}i,\,\,k(m{-}1){+}l{+}i{+}1,\dotsc, km{+}l,
	 \]
	   and then
	   \[
	   k(m{-}1){+}l{+}1,\dotsc, k(m{-}1){+}l{+}i{-}1,
	   \]
	  in this order, for some $1<i\leq k$ such 
	  that $k(m{-}1){+}l{+}i\equiv 1\mod k$, in the beginning of $\pi'$---
	  in fact, $i=k-l+1$.

\item[(2)] From $\pi''\in\RRP_{213,132}(k s +l,k)$ for any $1\leq s< m-1$, by inserting
the two sequences of consecutive integers
 \[
 k(m{-}1){+}l{+}i,\,\,k(m{-}1){+}l{+}i{+}1,\dotsc, km{+}l,
 \]
 and then
 \[
 ks{+}l{+}1,\,\,ks{+}l{+}2,\dotsc, k(m{-}1){+}l{+}i{-}1,
 \]
in the beginning of $\pi''$, where $i$ satisfies the same condition as in the previous case.
\end{enumerate}
It is straightforward to verify that the three cases 
are mutually exclusive, and are indeed elements of $\RRP_{213,132}(km+l,k)$.
We must show that every element in $\RRP_{213,132}(km+l,k)$ 
belongs to one of these cases.

Consider the largest element $km+l$ in $\pi$. 
All elements to its left must appear in increasing order. 
Moreover, there cannot be any gaps, as that 
would produce a $132$-pattern. There are two cases to consider;
either $km+l$ occurs after $k(m-1)+l$ and there are exactly $k-1$ elements, all are greater than $k(m-1)+l$, between them in increasing order. This implies that $\pi$ belongs to case (1a) above.
Otherwise, the entries to the left of $km+l$
form the interval
\[
\underbrace{k(m{-}1){+}l{+}i,\,\,k(m{-}1){+}l{+}i{+}1,\dotsc, km{+}l-1}_{k-i},km{+}l,
\]
for unique $i \in [k]$. 
The largest entry in $\pi$, to the right of $km+l$,
is $k(m{-}1){+}l{+}i-1$, so 
the entries between $km+l$ and $k(m{-}1){+}l{+}i-1$ must form 
an increasing sequence of consecutive integers, the first one being 
of the form $ks+l+1$ for some $s \in [m-1]$.
This puts $\pi$ in either case (1b) (for $s=m-1$) or case (2).

This shows that we have the recursion 
 \begin{align*}
 \rrp_{213,132}(km+l,k) &= 2 \, \rrp_{213,132}(k(m-1)+l,k)+\rrp_{213,132}(k(m-2)+l,k)+ \\
  &\dotsb +\rrp_{213,132}(k+l,k)+\rrp_{213,132}(l,k).
\end{align*}
Thus, (after checking initial conditions) we have that
$\{ \rrp_{213,132}(km+l,k) \}_{m=0}^\infty$ satisfies 
the same recursion as in \eqref{eq:oddFibonacci}. 
Therefore, $\rrp_{213,132}(km+l,k)=F_{2m+1}$.

\bigskip 
\noindent
\textbf{Case $l=0$}. 
Suppose that $\pi\in\RRP_{213,132}(k(m+1),k)$. 
By a similar argument as above, $\pi$ either starts with
\[
km{+}1,\,\,km{+}2,\dotsc, k(m{+}1){-}1,\,\, k(m{+}1),
\]
or $km$ is followed by this sequence of integers.
Thus, we can obtain $\pi$ from $\pi'\in \RRP_{213,132}(km,k)$ by
inserting
\[
km{+}1,\,\,km{+}2,\dotsc, k(m{+}1){-}1,\,\, k(m{+}1)
\]
either immediately after $km$, or at the beginning of $\pi'$. 
Hence,
\begin{align*}
	\rrp_{213,132}(k(m+1),k)=2\,\rrp_{213,132}(km,k),
\end{align*}
and we have the initial condition $\rrp_{213,132}(k,k)=1$ (the identity permutation),
so it follows that $\rrp_{213,132}(km,k)=2^{m-1}$.
\end{proof}

\begin{corollary}
We have 
\[
	\pap_{213,132}(2m) =2^{m-1}\qquad \text{ and }\qquad \pap_{213,132}(2m+1)=F_{2m+1}.
\]
\end{corollary}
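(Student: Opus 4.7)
The plan is to derive the corollary as the specialization $k=2$ of the preceding proposition. Since a parity-alternating permutation starting with an odd integer is precisely a mod-$2$-alternating permutation (the condition $\pi(i) \equiv i \bmod 2$ forces $\pi(1)$ to be odd), we have the identification $\pap_{213,132}(n) = \rrp_{213,132}(n,2)$ for every $n$.

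With that identification in hand, I would simply substitute $k=2$ into the closed form
\[
\rrp_{213,132}(km+l,k) = \begin{cases} 2^{m-1} & \text{if } l=0, \\ F_{2m+1} & \text{otherwise,} \end{cases}
\]
treating the two residues $l \in \{0,1\}$ separately. The case $n=2m$ corresponds to $(k,m,l)=(2,m,0)$, which immediately yields $\pap_{213,132}(2m)=2^{m-1}$. The case $n=2m+1$ corresponds to $(k,m,l)=(2,m,1)$, which gives $\pap_{213,132}(2m+1)=F_{2m+1}$.

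There is essentially no obstacle here: all the combinatorial work has been done in the preceding proposition, and the corollary is a clean specialization. The only thing worth noting explicitly is the identification between $\PAP_{213,132}(n)$ and $\RRP_{213,132}(n,2)$, which is immediate from the definitions. I would write the proof as a single short paragraph invoking the previous proposition, rather than redoing any recursion.
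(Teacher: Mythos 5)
Your proposal is correct and matches the paper's (implicit) argument exactly: the corollary is stated without proof precisely because it is the specialization $k=2$ of the preceding proposition, with $l=0$ giving the even case and $l=1$ the odd case, using the identification $\pap_{213,132}(n)=\rrp_{213,132}(n,2)$ noted in the definition of $\RRP(n,k)$. Nothing further is needed.
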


\subsection{Case E (231,132)}
\begin{proposition}
	We have that
	\[
	\pap_{231,132}(2m)= 2^{m-1}\quad\text{ and }\quad
	\pap_{231,132}(2m+1)=2^m .
	\]
\end{proposition}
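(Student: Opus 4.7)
The plan is to pin down the location of the maximum entry $n$ in any $\pi\in\PAP_{231,132}(n)$ and turn that into a simple recursion. First I would observe the classical structural fact that in any $\sigma\in\symS_{231,132}(n)$, the value $n$ must sit at position $1$ or at position $n$: if $n$ occupied an interior position, then any entry $a$ to its left and $b$ to its right would produce either the pattern $132$ (when $a<b$) or the pattern $231$ (when $a>b$), which is forbidden.

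Combining this with the PAP condition $\pi(i)\equiv i\pmod 2$ forces the position of $n$ to agree in parity with $n$ itself. When $n=2m$ is even, the only admissible placement is position $2m$; when $n=2m+1$ is odd, both position $1$ and position $2m+1$ are odd and hence both admissible. Deleting $n$ from position $n$ produces a permutation in $\PAP_{231,132}(n-1)$. Deleting $n$ from position $1$ (only possible for odd $n=2m+1$) and relabelling positions from $1$ leaves a permutation of $[2m]$ whose first entry is even, i.e., an element of $\PAP^*_{231,132}(2m)$; in both cases the reconstruction is uniquely determined by the insertion of $n$, so each deletion is a bijection.

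Next, I would identify $|\PAP^*_{231,132}(2m)|$ with $\pap_{231,132}(2m)$ in the spirit of \cref{lem:revBij}: the reverse map sends $132\leftrightarrow 231$, so it preserves the pattern pair $\{132,231\}$, and at even length it converts a PAP$^*$ into a PAP. Putting the two cases together yields the recursions
\begin{equation*}
 \pap_{231,132}(2m)=\pap_{231,132}(2m-1),\qquad \pap_{231,132}(2m+1)=2\,\pap_{231,132}(2m),
\end{equation*}
which, together with the base case $\pap_{231,132}(1)=1$, give by induction the claimed closed forms $\pap_{231,132}(2m)=2^{m-1}$ and $\pap_{231,132}(2m+1)=2^{m}$.

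The only step that needs some care is the ``$n$ at position $1$'' case, where the residual sequence lives in $\PAP^*$ rather than in $\PAP$; the reverse map disposes of this mismatch cleanly, and I expect no deeper obstacle.
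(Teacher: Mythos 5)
Your proof is correct and follows essentially the same route as the paper: both rest on the observation that the maximum must occupy the first or last position of a $\{132,231\}$-avoiding permutation, and both reduce to the recursions $\pap_{231,132}(2m)=\pap_{231,132}(2m-1)$ and $\pap_{231,132}(2m+1)=2\,\pap_{231,132}(2m-1)$. The only (cosmetic) difference is that you delete one entry at a time and use the reverse bijection to convert the resulting $\PAP^*$ back to a $\PAP$, whereas the paper inserts the pair $2m{+}1,2m$ (or $2m,2m{+}1$) in one step and thereby avoids the $\PAP^*$ detour.
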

\begin{proof}
	If $\pi \in \symS_{231,132}(n)$, it is clear that $n$ must be either in the
	very beginning, or at the very end of $\pi$. 
	Thus, $\pi\in\PAP_{231,132}(2m+1)$ is obtained from $\pi'\in \PAP_{231,132}(2m-1)$ 
	by inserting $2m,2m{+}1$ at the end of $\pi'$, or by inserting $2m{+}1,2m$ 
	at the beginning of $\pi'$. 
	Moreover, every $\pi\in\PAP_{231,132}(2m)$ can be obtained from some 
	$\pi'\in \PAP_{231,132}(2m-1)$ by inserting $2m$ at the end of $\pi'$.  
	Hence,
	\[
	\pap_{231,132}(2m+1)=2\, \pap_{231,132}(2m-1)\quad\text{ and }\quad \pap_{231,132}(2m)=\pap_{231,132}(2m-1),
	\]
	where $\pap_{231,132}(1)=1$ and $\pap_{231,132}(2)=1$. 
	By a simple inductive argument, the statement follows.
\end{proof}

\subsection{Case F, (312,231)}

For $k \geq 3$, $\RRP_{312,231}(n,k)$ is just the identity permutation,
due to \cref{prop:onlyIdentityStronger}.

For $k=2$, we have the following proposition.
\begin{proposition}
We have that $\pap_{312,231}(n) = F_n$, the $n^\thsup$ Fibonacci number.
\end{proposition}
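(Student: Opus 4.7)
The plan is to combine the standard characterization of $\{231,312\}$-avoiding permutations as \emph{layered} permutations with a direct parity calculation. Recall that a permutation $\pi \in \symS_n$ avoids both $231$ and $312$ if and only if it decomposes into consecutive decreasing runs whose value ranges form consecutive intervals arranged in increasing order. Equivalently, such $\pi$ are in bijection with compositions $(c_1, c_2, \dotsc, c_r)$ of $n$: block $i$ has length $c_i$ and contains the values $\{C_{i-1}+1, \dotsc, C_i\}$ in decreasing order, where $C_i \coloneqq c_1 + c_2 + \dotsb + c_i$. This recovers the classical count $|\symS_{231,312}(n)| = 2^{n-1}$.

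Given the layered decomposition, the next step is to determine when the associated permutation is a PAP. Within block $i$, the position $j = C_{i-1} + s$ (with $s \in \{1, 2, \dotsc, c_i\}$) holds the entry $\pi(j) = C_i - (s-1)$, so that
\[
\pi(j) - j \;=\; c_i - 2s + 1.
\]
The parity of this quantity is independent of $s$ and equals the parity of $c_i + 1$. Hence the PAP condition $\pi(j) \equiv j \pmod 2$ holds uniformly throughout block $i$, and precisely when $c_i$ is odd. Consequently, $\pi \in \PAP_{312,231}(n)$ if and only if every part $c_i$ of its associated composition is odd.

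The final step reduces the problem to a well-known enumeration: the number of compositions of $n$ into odd parts is $F_n$. Writing $g(n)$ for this count, every odd composition of $n$ either begins with a part equal to $1$ (leaving an odd composition of $n-1$) or with a part at least $3$, which after subtracting $2$ becomes an odd composition of $n-2$. This yields the recurrence $g(n) = g(n-1) + g(n-2)$ with $g(1) = g(2) = 1$, so $g(n) = F_n$, as required.

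No serious obstacle is expected. The one step that needs care is the uniform parity verification inside each block; this is handled transparently by the identity $\pi(j) - j = c_i - 2s + 1$, which reduces the entire PAP condition to the single constraint that each block size $c_i$ is odd.
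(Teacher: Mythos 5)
Your proof is correct, and it takes a genuinely different route from the paper's. The paper argues recursively: it shows that for $\pi\in\PAP_{312,231}(n)$ with $\pi(n)\neq n$, the entries $n,n-1,n-2$ must form a consecutive subword, so every such $\pi$ arises either by appending $n$ to an element of $\PAP_{312,231}(n-1)$ or by inserting $n,n-1$ immediately before $n-2$ in an element of $\PAP_{312,231}(n-2)$; this yields the Fibonacci recursion directly on the permutations. You instead invoke the classical structural fact that $\symS_{231,312}(n)$ consists exactly of the layered permutations, encoded by compositions of $n$, and then observe via the identity $\pi(j)-j=c_i-2s+1$ that the parity-alternating condition holds precisely when every layer size $c_i$ is odd; this reduces the count to the well-known enumeration of compositions of $n$ into odd parts by $F_n$. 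Your route buys an explicit bijection $\PAP_{312,231}(n)\leftrightarrow\{\text{odd compositions of }n\}$, which is more transparent than a pure recursion and would also immediately give refinements (e.g., by number of layers); the cost is that you import the layered characterization of $\{231,312\}$-avoiders as a known fact, whereas the paper's argument is self-contained. Both the parity computation and the odd-composition recurrence $g(n)=g(n-1)+g(n-2)$ with $g(1)=g(2)=1$ check out, so there is no gap.
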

\begin{proof}
It is easy to verify that this holds for $n \leq 3$,
so suppose that $\pi\in \PAP_{312,231}(n)$ for some $n \geq 4$.

If $\pi(n)\neq n$, then $n-1$ must appear immediately to the right of 
$n$, in order to avoid $312$. 
Similarly, $n-2$ cannot be anywhere left of $n$,
since then $n-2$, $n$ and $\pi(n)$
would form a $231$-pattern.
Now, if there was some element between $n-1$ and $n-2$,
there would be a $312$-pattern in $\pi$. 
Hence, $n,n-1,n-2$ must be a subword of $\pi$.

It follows that any $\pi\in \PAP_{312,231}(n)$ can be either constructed 
from some $\pi' \in \PAP_{312,231}(n-1)$, by appending $n$
at the very end, or by inserting $n,n-1$ immediately before $n-2$
in some $\pi'' \in \PAP_{312,231}(n-2)$.
Thus, we have the classical Fibonacci recursion and the statement follows.
\end{proof}
Observe that for odd $n$, we may apply \cref{lem:revBij}
and then see that $\pap_{312,231}(n) = \pap_{132,213}(n)$,
where the latter is $F_n$ according to Case~D.

\subsection{Case G, (321,132)}

\begin{lemma}\label{lem:cons}
Let $\pi \in \RRP_{321,132}(km+l,k)$, where $0\leq l<k$ and $k \geq 2$. 
Then for any $j \in [m]$, the entries
\[
k(j{-}1){+}1,\,\, k(j{-}1){+}2,\,\, \dotsc,\,\, kj{-}1,\,\, kj
\]
form a subword of $\pi$. 

Moreover, the last $l$ numbers in $[km+l]$ 
must appear in increasing order at the very end of $\pi$.
\end{lemma}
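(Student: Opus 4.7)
The plan is to induct on $m$. The base case $m=0$ is trivial: when $n=l<k$, the mod-$k$-alternating condition forces $\pi_i\in\{i,i+k,\dotsc\}\cap[l]=\{i\}$, so $\pi$ is the identity and both assertions hold immediately.

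For the inductive step, I would first extract the (classical) structure of $\{321,132\}$-avoiding permutations. Write $n=km+l$ and decompose $\pi=[\alpha,n,\beta]$ around the maximum entry. The $132$-avoidance (as in \cref{fig:132Avoiding}) gives $\min(\alpha)>\max(\beta)$. When $\beta$ is non-empty, the $321$-avoidance forces $\alpha$ to be increasing (any descent in $\alpha$ together with $\beta_1$ would be a $321$) and forces $\beta$ to be increasing (any descent in $\beta$ preceded by $n$ would be a $321$). Since $\alpha$ consists of the largest $|\alpha|$ elements of $[n-1]$ in increasing order, this splits the analysis into two cases: either (i) $\beta$ is empty and $\pi=[\alpha,n]$ with $\alpha\in\RRP_{321,132}(n-1,k)$, or (ii) $\pi=(n-s,n-s+1,\dotsc,n,1,2,\dotsc,n-s-1)$ for some $s\geq 0$.

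Case (i) I handle by applying induction to $\alpha$. If $l\geq 1$, then $\alpha$ has parameters $(m,l-1)$, so by induction its trailing segment is $km+1,\dotsc,km+l-1$ in order and its blocks are subwords; appending $n=km+l$ extends the trailing segment to $km+1,\dotsc,km+l$ and preserves every block. If $l=0$, then $\alpha$ has parameters $(m-1,k-1)$, so by induction its first $m-1$ blocks are subwords and its last $k-1$ entries are $k(m-1)+1,\dotsc,km-1$ in order; these together with the appended entry $km$ form the $m$-th block as a subword, with no trailing entries remaining.

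The heart of the proof is case (ii), where the mod-$k$-alternating condition interacts with the rigid form above. The constraint $\pi(1)=n-s\equiv 1\pmod{k}$ yields $s\equiv l-1\pmod k$, while $\pi(s+2)=1\equiv s+2\pmod k$ yields $s\equiv k-1\pmod k$. These are simultaneously solvable only when $l\equiv 0\pmod k$, forcing $l=0$. Writing $s=jk-1$ for some $j\in\{1,\dotsc,m-1\}$, one sees that $\pi$ is exactly the concatenation of the last $j$ blocks in increasing order, followed by the first $m-j$ blocks in increasing order, so both claims of the lemma are immediate (and there are no trailing entries since $l=0$). The main obstacle is precisely this case analysis: deriving the two mod-$k$ congruences on $s$ and showing how they collapse the space of possibilities, thereby revealing how restrictive the combination of the two pattern-avoidance conditions and the mod-$k$-alternating condition is.
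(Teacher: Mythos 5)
Your proof is correct in substance but takes a genuinely different route from the paper's. The paper argues locally and non-inductively: for each $1<i\leq k$ it first shows that $k(j{-}1){+}i{-}1$ must precede $k(j{-}1){+}i$ (otherwise these two entries together with $\pi(1)$ form a $132$- or a $321$-pattern, using $\pi(1)\equiv 1\bmod k$), and then that they must be adjacent (any entry between them is smaller by $132$-avoidance, and considering where the entry $1$ sits produces a forbidden pattern either way). You instead decompose around the maximum, observe that the combination of $321$- and $132$-avoidance rigidifies $\pi$ completely whenever the maximum is not in the last position, and push the mod-$k$ congruences through that rigid form; the remaining case is handled by stripping the last letter and recursing. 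Your route actually yields more: in your case (ii) you obtain the exact shape of $\pi$ as a block rotation, which essentially anticipates the bijection to $\symS_{321,132}(m)$ used in the proposition immediately following the lemma. One small repair is needed: you announce induction on $m$, but your case (i) with $l\geq 1$ reduces the parameters $(m,l)$ to $(m,l-1)$, which does not decrease $m$; you should induct on $n=km+l$ (or on $(m,l)$ lexicographically), after which the argument goes through as written.
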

\begin{proof}
	Suppose $k(j-1)+i$ is on the left of $k(j-1)+i-1$, for some $1<i\leq k$. 
	Then $\pi(1), k(j{-}1){+}i, k(j{-}1){+}i{-}1$ form either a
	$132$- or $321$-pattern, depending on the
	value of $\pi(1)$ (we know $\pi(1)\neq k(j-1)+i$ since $\pi(1) \equiv 1 \mod k$). 
	Therefore,
	\[
	k(j{-}1){+}1,\,\, k(j{-}1){+}2,\,\, \dotsc,\,\, kj{-}1,\,\, kj
	\]
	appear in this order.
	
	Now suppose that $k(j-1)+i-1$ and $k(j-1)+i$, for some $1<i\leq k$, 
	are not adjacent. Then there are at least $k$
	entries between them since $\pi$ is a mod-$k$-alternating permutation. 
	Each such entry must be
	less than $k(j-1)+i-1$, otherwise, we have a $132$-pattern.
	Now consider the position of $1$ in $\pi$. If
	$1$ appears to the left of $k(j-1)+i-1$, then
	$1,\dotsc,\,\,k(j{-}1){+}i{-}1,\,\,t$ form a $132$-pattern,
	where $t$ is the number succeeding $k(j-1)+i-1$. 
	If $1$ is on the right, it can not be immediately after since $i>1$, of
	$k(j-1)+i-1$, then $t\neq 1$ and $k(j{-}1){+}i{-}1,\,\,t,\dotsc,\,\,1$ form a
	$321$-pattern.
	
	Now, it remain to show the last statement in the lemma.
	For $l=0$, there is noting more to prove. 
	If $l>0$ we can argue in the same manner as above, that $km+1,km+2,\dotsc,km+l$
	must form a subword of $\pi$. Since all other entries are partitioned into
	blocks of length $k$, we must have that the $l$ largest entries
	appear at the very end. Otherwise, the entry immediately after $km+l$
	would have the wrong remainder mod $k$.
\end{proof}

\begin{proposition}
For any $k \geq 1$, $m \geq 0$ and $0\leq l<k$, we have that
\[
\rrp_{321,132}(km+l,k)=\binom{m}{2}+1.
\]
\end{proposition}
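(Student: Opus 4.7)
The plan is to invoke \cref{lem:cons} to collapse each $\pi \in \RRP_{321,132}(km+l,k)$ to a permutation $\sigma \in \symS_m$ of $m$ blocks, verify that the forbidden patterns $321$ and $132$ transfer cleanly between $\pi$ and $\sigma$, and then enumerate $\symS_{321,132}(m)$ by a short recursive argument.

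First I would use \cref{lem:cons} to write every $\pi \in \RRP_{321,132}(km+l,k)$ in the form
\[
  \pi = [B_{\sigma(1)}, B_{\sigma(2)}, \dotsc, B_{\sigma(m)}, km+1, km+2, \dotsc, km+l],
\]
where $B_j \coloneqq [k(j-1)+1, \dotsc, kj]$ is an increasing block of $k$ consecutive integers and $\sigma \in \symS_m$. Each block occupies $k$ positions starting at one congruent to $1 \bmod k$, so the mod-$k$ constraint is automatic; conversely every $\sigma \in \symS_m$ produces such a permutation, giving a bijection between $\symS_m$ and the set of mod-$k$-alternating permutations of length $km+l$ whose entries factor into these blocks plus the fixed tail.

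Next I would show that this bijection restricts to a bijection $\symS_{321,132}(m) \to \RRP_{321,132}(km+l,k)$. Since each block is increasing, no forbidden pattern can use two entries of a single block. The tail consists of the $l$ largest entries placed in increasing order at the very end, so it cannot play any role in a $321$- or $132$-pattern, being simultaneously rightmost and maximal. Consequently any forbidden pattern in $\pi$ selects one entry from each of three distinct blocks, and inherits the same relative order as the corresponding triple in $\sigma$. This gives $\rrp_{321,132}(km+l,k) = |\symS_{321,132}(m)|$.

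Finally I would enumerate $\symS_{321,132}(m)$ directly. For $\sigma \in \symS_{321,132}(m)$, write $\sigma = [\alpha_L, m, \alpha_R]$; $132$-avoidance forces $\min \alpha_L > \max \alpha_R$. If $\alpha_R$ is empty, $\alpha_L$ ranges freely over $\symS_{321,132}(m-1)$. Otherwise, any descent in $\alpha_R$ forms a $321$-pattern with the preceding $m$, and any descent in $\alpha_L$ forms a $321$-pattern together with any entry of $\alpha_R$, so both $\alpha_L$ and $\alpha_R$ must be increasing; then $\sigma$ is determined by $|\alpha_L| \in \{0,1,\dotsc,m-2\}$, contributing $m-1$ permutations. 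The recursion $f(m) = f(m-1) + (m-1)$ with $f(1) = 1$ then telescopes to $f(m) = \binom{m}{2} + 1$. The main obstacle will be the careful verification in the second step, namely that no forbidden pattern in $\pi$ can involve two entries of the same block or any entry from the tail; everything else is bookkeeping.
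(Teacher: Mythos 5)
Your proof is correct and takes essentially the same route as the paper: apply \cref{lem:cons} to collapse each permutation to its block permutation $\sigma\in\symS_m$, check that this collapse is a bijection onto $\symS_{321,132}(m)$, and count. The only difference is that the paper cites Simion--Schmidt for $|\symS_{321,132}(m)|=\binom{m}{2}+1$ while you rederive it inline; both are fine.
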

\begin{proof}
Let $\pi \in \RRP_{321,132}(km+l,k)$. By the second statement in \cref{lem:cons},
we can simply disregard the last $l$ entries and conclude that 
$\rrp_{321,132}(km+l,k) = \rrp_{321,132}(km,k)$.
By the first property in \cref{lem:cons}, we note that there is a simple bijection
from $\RRP_{321,132}(km,k)$ to $\symS_{321,132}(m)$.
We simply keep only the first entry in each block of $k$ letters,
and standardize the result.
Thus, $\rrp_{321,132}(km+l,k)=|\symS_{321,132}(m)|$.
Finally, it has already been shown that $|\symS_{321,132}(m)| = \binom{m}{2}+1$, 
see \cite[Prop. 11]{SimionSchmidt1985}.
\end{proof}

\begin{corollary}
We have that
\[
	\pap_{321,132}(2m)=\pap_{321,132}(2m+1)=\binom{m}{2}+1.
\]
\end{corollary}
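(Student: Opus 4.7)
The plan is to observe that this corollary is an immediate specialization of the preceding \cref{prop:321-132} (the unlabeled proposition just above) with $k=2$. By definition, the set $\PAP(n)$ of parity-alternating permutations starting with an odd integer coincides with $\RRP(n,2)$, and the same identification descends to the pattern-avoiding subsets, giving
\[
\pap_{321,132}(n) = \rrp_{321,132}(n,2).
\]

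Writing $n = 2m+l$ with $l\in\{0,1\}$, I would apply the proposition directly in each case. When $n=2m$, we take $k=2$, $l=0$, and the proposition yields $\rrp_{321,132}(2m,2) = \binom{m}{2}+1$. When $n=2m+1$, we take $k=2$, $l=1$, and the proposition yields $\rrp_{321,132}(2m+1,2) = \binom{m}{2}+1$. The notable feature is that the formula in the proposition depends only on $m$ and is independent of the residue $l$; this is precisely what produces the same value in both parities.

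There is no real obstacle here, since all the combinatorial work (the structural \cref{lem:cons}, the reduction $\rrp_{321,132}(km+l,k) = \rrp_{321,132}(km,k)$ via deleting the last $l$ trailing entries, the blockwise bijection to $\symS_{321,132}(m)$, and the citation of $|\symS_{321,132}(m)| = \binom{m}{2}+1$ from Simion--Schmidt) has already been carried out at the full $k$-level in the proposition. So the corollary is just a two-line substitution: specialize $k=2$, read off the two cases $l=0$ and $l=1$, and conclude.
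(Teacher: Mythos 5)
Your proposal is correct and matches the paper's (implicit) argument exactly: the corollary is the $k=2$ specialization of the preceding proposition, with $l=0$ giving the even case and $l=1$ the odd case, and the independence of the formula from $l$ is what makes both values equal. The paper offers no separate proof for the corollary, so your two-line substitution is precisely what is intended.
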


\section{Remarks}

The attentive reader has most likely noticed that 
we have said nothing about parity-alternating permutations
avoiding the single pattern 123, or the single pattern 321.
The sequence with the number of 123-avoiding PAPs of size $n$,
starts as
\[
 1, 1, 1, 3, 3, 10, 11, 37, 44, 146, 185, 603, 808, 2576,\dotsc
\]
Similarly, for $321$-avoiding PAPs, we get 
\[
 1, 1, 1, 2, 3, 6, 11, 22, 44, 89, 185, 382, 808, 1702, \dotsc.
\]
Recall from \cref{lem:revBij} that the reversal map 
is a bijection between the sets 
$\PAP_{321}(2m+1)$ and $\PAP_{123}(2m+1)$,
so $\pap_{321}(2m+1)=\pap_{123}(2m+1)$ for all $m\geq 0$.
We have not managed to produce recursive formulas 
for these sequences. The numbers above were obtained by 
brute-force enumeration,
and we note that there is no hit in the OEIS~\cite{OEIS}.

However, we have managed to find a property of $321$-avoiding PAPs,
\cref{prop:321AvoPAPs}, which might exploited to produce a recursive formula.

\begin{definition}
	A \defin{left-to-right maximum} of $\pi\in\symS_n$ is an integer $i\in[n]$ such that $i>j$, 
	for all $j\in[n]$ to the left of $i$. 
	We call the integers that are not left-to-right maxima as \defin{hole values}.
\end{definition}

\begin{proposition}\label{prop:321AvoPAPs}
	Let $\pi\in\PAP_{321}(n)$ be non-identity. Then
	\begin{enumerate}
		\item there exists a pair of hole values in $\pi$ that are consecutive and
		\item the highest such pair of numbers either stands in 
		adjacent positions or have an even length sequence of 
		consecutive left-to-right-maxima between the numbers.
	\end{enumerate}
\end{proposition}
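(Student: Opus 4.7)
The plan is to build both parts on the standard structural fact that in any $321$-avoiding permutation, both the sequence of left-to-right maxima and the sequence of hole values are increasing when read left to right. For LR-maxima this is immediate from the definition; for hole values, if some $h_1$ preceded $h_2$ with $h_1>h_2$, then any LR-maximum $m>h_1$ witnessing that $h_1$ is a hole value produces a $321$-pattern $m\,h_1\,h_2$. I would record this as a preliminary lemma.

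For part (1), I would locate the leftmost index $i$ with $\pi(i)\neq i$. By minimality $\pi(j)=j$ for $j<i$, so $\pi(i)>i$, and the PAP parity condition $\pi(i)\equiv i\pmod 2$ then forces $\pi(i)\geq i+2$. Each value in $\{i,i+1,\dots,\pi(i)-1\}$ occupies a position strictly to the right of $i$ and is dominated by the LR-maximum $\pi(i)$ sitting at position $i$, so it is a hole value. This already produces $i$ and $i+1$ as a pair of consecutive integers that are both hole values.

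For part (2), I would take the highest such pair $(h,h+1)$, sitting at positions $p<q$, and show that every entry strictly between these positions must be a left-to-right maximum. Indeed, a hole value smaller than $h$ appearing between $p$ and $q$ would have to precede $h$ in $\pi$, and one larger than $h+1$ would have to follow $h+1$, each contradicting the monotonicity of the hole-value subsequence. Hence $\pi(p+1),\dots,\pi(q-1)$ is a run of left-to-right maxima occupying consecutive positions of $\pi$. A parity count then closes the proof: since $\pi(p)=h$ and $\pi(q)=h+1$ have opposite parities, the PAP condition forces $p$ and $q$ to have opposite parities, and so $q-p-1$ is even, giving the required even-length sequence of LR-maxima between the two numbers.

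The only point needing a little care, rather than a genuine obstacle, is fixing the intended reading of \emph{consecutive left-to-right-maxima} as LR-maxima standing in consecutive positions of $\pi$ (not as LR-maxima with consecutive values); once this is settled, the entire argument is a straightforward combination of the two-increasing-subsequence decomposition of $321$-avoiders with the parity bookkeeping of the PAP property.
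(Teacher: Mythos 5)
Your part (1) and the bulk of part (2) follow essentially the same route as the paper: locate the leftmost non-fixed point $i$, use the parity condition to get $\pi(i)\geq i+2$ and hence the consecutive hole values $i,i+1$; then, for the highest pair $(h,h+1)$ at positions $p<q$, show all intermediate entries are left-to-right maxima and use the opposite parities of $h$ and $h+1$ to make $q-p-1$ even. Your mechanism for the ``all intermediate entries are LR-maxima'' step (monotonicity of the hole-value subsequence in a $321$-avoider) is a clean and slightly more explicit version of what the paper asserts, and your preliminary lemma is correct.

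The gap is in how you resolve the word ``consecutive.'' You read ``consecutive left-to-right-maxima'' as LR-maxima occupying consecutive \emph{positions}, which makes that clause essentially vacuous (the entries at positions $p+1,\dotsc,q-1$ are automatically positionally consecutive). The paper's proof shows the intended reading is LR-maxima with consecutive \emph{values}: it explicitly argues ``if the numbers in between are consecutive, we are done; otherwise we obtain a pair of consecutive hole values greater than $h$ and $h+1$, contradicting maximality.'' That extra step is exactly where the hypothesis that $(h,h+1)$ is the \emph{highest} pair is used --- a hypothesis your argument never invokes, which is a signal that you are proving a weaker statement (your part (2) holds verbatim for every pair of consecutive hole values, not just the highest). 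To close the gap you would need to add: if some value $w$ with $m_1<w<m_t$ is missing from the set $\{m_1<\dotsb<m_t\}$ of LR-maxima between positions $p$ and $q$, then $w$ cannot lie left of position $p$ (it would kill $m_1$ as an LR-maximum), so it lies right of $q$ and is therefore a hole value exceeding $h+1$; one then extracts from the missing values a consecutive pair of hole values above $h+1$, contradicting the choice of $(h,h+1)$.
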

\begin{proof}
	Let $i\in[1,n-1]$ be the highest position such that no exceedance 
	in $\pi\in\PAP_{321}(n)$ till $i$. 
	Then $\pi_j=j$, for all $j=1,2,\dotsc, i{-}1$, and $\pi_{i+1}>i{+}1$ 
	since $\pi_i>i$ and $\pi$ is a $321$-avoiding PAP. 
	Thus, $i$ and $i+1$ are hole values, which shows the existence.
	
	\medskip
	
	 Now consider the highest such pair of consecutive hole values in $\pi$, 
	 say $h$ and $h+1$. The integer $h+1$ is always from the right of $h$. 
	 Otherwise, $\pi_h, h{+}1, h$ forms a $321$ pattern. If $h$ and $h+1$ are 
	 adjacent, then we are done. Otherwise, there are even number, 
	 at least two, of positions between $h$ and $h+1$ since $\pi$ is a PAP. 
	 Moreover, the numbers between them are left-to-right maxima 
	 since $\pi$ is $321$-avoiding. If the numbers in between are consecutive, 
	 then we are done. Otherwise, we have a pair of consecutive hole values 
	 which are greater than $h$ and $h+1$. This is a contradiction since $h$ and $h+1$ are the highest.
\end{proof}

\section*{Acknowledgements}

Thanks to Ran Pan for proposing the problem about pattern avoidance of 
parity-alternating permutations.\url{https://mathweb.ucsd.edu/~projectp/problems/p10.html}

We appreciate the hospitality we get from
department of mathematics, Stockholm University. 
We specially acknowledge the useful discussions with Jörgen Backelin from Stockholm University.
 
The second and third authors are grateful for the 
financial support extended by the cooperation 
agreement between International Science Program (ISP) 
at Uppsala University and Addis Ababa University.
We are also grateful for the financial support by the Wennergren foundation.

Dun Qiu is supported by NSFC grant No. 12001037.

The On-line Encyclopedia of integer sequences (OEIS)
has been a valuable tool in this research project.

\begin{appendices}

\section{Appendix: Lagrange inversion}\label{apx:lagrange}

In this section, we prove a version of Lagrange inversion,
where the formal power series do not necessarily have integer powers.
Our proof follows the one outlined in \cite[Ch. 5.4]{StanleyEC2},
and lecture notes by P.~Maygar.

Let $d$ be a fixed positive integer and let $\setC(\!(x^{1/d})\!)$ 
be the field of Laurent series of the form
\[
 \sum_{k \geq -N/d} a_{k/d} x^{k/d}, \qquad a_{k/d} \in \setC, \qquad N \in \setZ.
\]
Moreover, we let $\setC[\![x^{1/d}]\!]$ be such Laurent series with only non-negative powers
appearing, and $x\setC[\![x^{1/d}]\!]$ be such series where all powers appearing are at least $1$.

\begin{lemma}\label{lem:derivCoeff}
Let $h \in \setC(\!(x^{1/d})\!)$. Then
\begin{enumerate}
 \item $h'(x) \vert_{x^{-1}} = 0$.
\item 
For $f \in x\setC[\![x^{1/d}]\!]$, with $f(x) \vert_{x^1} \neq 0$
and any $j \in \setQ$, we have
\[
 f(x)^j f'(x) \vert_{x^{-1}} = 
 \begin{cases}
 1 & \text{ if $j=-1$} \\
 0 & \text{ otherwise.}
 \end{cases}
\]
\end{enumerate}
\end{lemma}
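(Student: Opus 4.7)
The plan is to prove the two parts independently by direct manipulation of the Laurent series, since both statements reduce to tracking the exponent $-1$ through termwise differentiation and multiplication.

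For part (1), I would simply write $h(x) = \sum_{k \geq -N} a_{k/d} x^{k/d}$ and differentiate term-by-term to get
\[
h'(x) = \sum_{k \geq -N} a_{k/d}\,\tfrac{k}{d}\, x^{k/d - 1}.
\]
The only index that can contribute to the $x^{-1}$ coefficient is $k=0$, but then the contribution is $a_0 \cdot (0/d) = 0$. So $h'(x)|_{x^{-1}} = 0$.

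For part (2), I would split on whether $j = -1$. When $j \neq -1$, the key identity is
\[
f(x)^j f'(x) = \frac{1}{j+1}\bigl(f(x)^{j+1}\bigr)',
\]
which, together with part (1), gives the result immediately — provided I can interpret $f^{j+1}$ as a Laurent series of the allowed type. Since $f \in x\setC[\![x^{1/d}]\!]$ with leading term $cx$ (where $c \neq 0$), I write $f = cx(1+g)$ with $g \in x^{1/d}\setC[\![x^{1/d}]\!]$, and define $f^{j+1} = c^{j+1} x^{j+1}(1+g)^{j+1}$ using the generalized binomial series for $(1+g)^{j+1}$. Writing $j = p/q$ with $q>0$, this places $f^{j+1}$ in the enlarged field $\setC(\!(x^{1/D})\!)$ for $D = \mathrm{lcm}(d,q)$, where part (1) holds verbatim by the same calculation. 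Therefore $(f^{j+1})'|_{x^{-1}} = 0$, whence $f^j f'|_{x^{-1}} = 0$.

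When $j = -1$, I would stay inside $\setC(\!(x^{1/d})\!)$: with $f = cx(1+g)$ as above, a direct calculation gives
\[
\frac{f'(x)}{f(x)} = \frac{1}{x} + \frac{g'(x)}{1+g(x)}.
\]
The first summand contributes $1$ to the $x^{-1}$ coefficient. Since $g$ has lowest exponent at least $1/d > 0$, the series $g'(x)$ has lowest exponent at least $1/d - 1 > -1$, and multiplying by $(1+g)^{-1} = 1 - g + g^2 - \cdots$ preserves this lower bound; hence the second summand contributes $0$ to $[x^{-1}]$. Combining, $f^{-1}f'|_{x^{-1}} = 1$.

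The main obstacle is the technical one in part (2), $j \neq -1$: making formal sense of $f^{j+1}$ for non-integer $j$ and justifying the identity $f^j f' = \frac{1}{j+1}(f^{j+1})'$ inside a large enough field. Once the field is chosen as $\setC(\!(x^{1/D})\!)$ with $D$ accommodating both $d$ and the denominator of $j$, the generalized binomial expansion for $(1+g)^{j+1}$ and the standard termwise derivative rules apply, and everything else is a mechanical check.
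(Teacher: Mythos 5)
Your proof is correct and follows essentially the same route as the paper: part (1) by termwise differentiation, the antiderivative identity $f^jf'=\frac{1}{j+1}(f^{j+1})'$ for $j\neq -1$, and a direct leading-coefficient computation of $f'/f$ for $j=-1$. The one place you go beyond the paper is in justifying what $f^{j+1}$ means for rational $j$ by enlarging to $\setC(\!(x^{1/D})\!)$ with $D=\mathrm{lcm}(d,q)$ — a technical point the paper leaves implicit — and that added care is welcome rather than a deviation.
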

\begin{proof}
The first statement is easy. Now if $j \neq -1$, taking $h(x) = f(x)^{j+1}$
gives that $h'(x) = (j+1)f(x)^j f'(x)$ and thus $f(x)^j f'(x) \vert_{x^{-1}} = 0$. 
For $j=-1$, and $f(x) = ax + b x^{1+1/d}+\dotsb$
\[
 \left. \frac{f'(x)}{f(x)} \right\vert_{x^{-1}} = \left. \frac{x \cdot f'(x)}{f(x)} \right\vert_{x^{0}}
 =
 \left. x \frac{a + bx^{1/d}+\dotsb}{ax + b x^{1+1/d}+\dotsb} 
 \right\vert_{x^{0}}
  = \frac{a}{a} =1.
\]
\end{proof}

\begin{theorem}[Lagrange inversion]
Let $f,g \in x\setC[\![x^{1/d}]\!]$ be inverses. Then for any $r \in \setQ$,
\begin{equation}\label{eq:lagrange1}
 g(x) \vert_{x^{r}} = \frac1r \cdot f(t)^{-r} \vert_{t^{-1}}.
\end{equation}
In particular, if $f(x) = x/ \phi(x)$ and $g(x) = x \cdot \phi( g(x) )$
for some $\phi \in \setC[\![x^{1/d}]\!]$ with non-vanishing constant coefficient,
then
\begin{equation}\label{eq:lagrange2}
 g(x) \vert_{x^{r}} = \frac1r \cdot \phi(t)^{r} \vert_{t^{r-1}}.
\end{equation}
\end{theorem}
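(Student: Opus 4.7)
The plan is to mimic the classical residue-calculus proof of Lagrange inversion (as outlined in \cite[Ch.~5.4]{StanleyEC2}), using \cref{lem:derivCoeff} in place of the usual fact about derivatives having no $t^{-1}$ coefficient.

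Write $g(x) = \sum_s a_s x^s$, with $s$ ranging over the admissible non-negative rationals. The idea is to isolate a single coefficient $a_r$ by a suitable coefficient extraction in a different variable. By part (2) of \cref{lem:derivCoeff},
\[
a_s \cdot \bigl(f(t)^{s-r-1} f'(t)\bigr) \vert_{t^{-1}} = a_r \cdot \delta_{s,r},
\]
so summing over $s$ and interchanging with coefficient extraction (valid because $f$ has $t$-valuation $1$, so the sum contributes only finitely often to each fixed power of $t$) gives
\[
a_r = \bigl( f(t)^{-r-1} f'(t) \cdot g(f(t)) \bigr) \vert_{t^{-1}} = \bigl( t \cdot f(t)^{-r-1} f'(t) \bigr) \vert_{t^{-1}},
\]
where I used $g(f(t)) = t$.

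Next comes a formal integration by parts. From the product rule,
\[
\frac{d}{dt}\bigl(t \cdot f(t)^{-r}\bigr) = f(t)^{-r} - r \cdot t \cdot f(t)^{-r-1} f'(t).
\]
Taking the $t^{-1}$ coefficient of both sides and using part (1) of \cref{lem:derivCoeff} to annihilate the left-hand side, I can solve to obtain $a_r = \frac{1}{r} f(t)^{-r} \vert_{t^{-1}}$, which is \eqref{eq:lagrange1}. For the specialization \eqref{eq:lagrange2}, I would substitute $f(x) = x/\phi(x)$, so that $f(t)^{-r} = t^{-r} \phi(t)^r$, and the coefficient of $t^{-1}$ in this expression is precisely the coefficient of $t^{r-1}$ in $\phi(t)^r$.

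The main obstacle is justifying the formal manipulations when $r$ and the exponents appearing in $g$ are rational rather than integral. In particular, $f(t)^s$ must be a well-defined element of $\setC(\!(t^{1/d'})\!)$ for some common denominator $d'$, which requires fixing a consistent rational power of the (nonzero) leading coefficient of $f$; and the interchange of the sum over $s$ with coefficient extraction must be verified. Both issues reduce to $f$ having valuation exactly $1$ with nonzero leading coefficient, so that $f(t)^s$ begins with a nonzero scalar multiple of $t^s$ and only finitely many values of $s$ contribute to any given power of $t$.
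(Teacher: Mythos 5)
Your proof is correct and follows essentially the same residue-calculus route as the paper, resting on the same two parts of \cref{lem:derivCoeff}. The only difference is cosmetic: the paper differentiates the identity $x = \sum_j c_j f(x)^j$ and then divides by $f(x)^r$ before extracting the coefficient of $x^{-1}$, whereas you first extract $a_r = \bigl(t\,f(t)^{-r-1}f'(t)\bigr)\vert_{t^{-1}}$ and then recover $\frac1r f(t)^{-r}\vert_{t^{-1}}$ by a formal integration by parts; your closing remarks on well-definedness of rational powers of $f$ address exactly the point the paper leaves implicit.
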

\begin{proof}
We let $g(x) = \sum_{j \geq 1} c_j x^j$, where $j$ ranges over positive integer multiples of $1/d$.
Now,
\[
  x = g(f(x)) = \sum_{j \geq 1} c_j f(x)^j.
\]
Taking derivatives and then dividing by $f(x)^r$ gives
\[
 f(x)^{-r} = \sum_{j \geq 1} j c_j f(x)^{j-r-1} f'(x).
\]
We now take the coefficient of $x^{-1}$ on both sides. 
By \cref{lem:derivCoeff}, we get 
\[
 f(x)^{-r} \big \vert_{x^{-1}} = r \cdot c_r,
\]
and this is precisely \eqref{eq:lagrange1}. 
The statement in \eqref{eq:lagrange2} is now straightforward to prove as well.
\end{proof}
Note that since $\phi(x)$ in \eqref{eq:lagrange2} has a non-zero constant term,
we can find the formal power series of $\phi(x)^r$ by using 
the binomial power series 
\[
 (1+x)^{r} = \sum_{n \geq 0} \binom{r}{n} x^n = 
 \sum_{n \geq 0} \frac{r(r-1)\dotsc(r-n+1)}{n!} x^n.
\]
\end{appendices}

\bibliographystyle{alphaurl}
\bibliography{bibliography}

\begin{thebibliography}{GKP94}

\bibitem[AG21]{AlexanderssonGetachew2021x}
Per Alexandersson and Frether Getachew.
\newblock An involution on derangements.
\newblock {\em arXiv e-prints}, 2021.
\newblock \href {http://arxiv.org/abs/2105.08455} {\path{arXiv:2105.08455}}.

\bibitem[BM19]{BrakMahony2019x}
R.~Brak and N.~Mahony.
\newblock Fibonacci, {M}otzkin, {S}chroder, {F}uss-{C}atalan and other
  combinatorial structures: Universal and embedded bijections.
\newblock {\em arXiv e-prints}, 2019.
\newblock \href {http://arxiv.org/abs/1909.09296} {\path{arXiv:1909.09296}}.

\bibitem[BM21]{BeyeneMantaci2021x}
Fufa Beyene and Roberto Mantaci.
\newblock Flattened partitions and subxceedant functions.
\newblock {\em arXiv e-prints}, 2021.
\newblock \href {http://arxiv.org/abs/2101.07081} {\path{arXiv:2101.07081}}.

\bibitem[GKP94]{GrahamKnuthPatashnik1994}
Ronald Graham, Donald Knuth, and Oren Patashnik.
\newblock {\em Concrete mathematics: {A} foundation for computer science}.
\newblock Addison-Wesley, Reading, Mass, second edition, 1994.

\bibitem[Hag07]{qtCatalanBook}
James Haglund.
\newblock {\em The $q,t$-{C}atalan numbers and the space of diagonal harmonics
  ({U}niversity lecture series)}.
\newblock American Mathematical Society, 2007.
\newblock URL: \url{https://www.math.upenn.edu/~jhaglund/books/qtcat.pdf}.

\bibitem[Kit11]{Kitaev2011}
Sergey Kitaev.
\newblock {\em Patterns in Permutations and Words}.
\newblock Springer Berlin Heidelberg, 2011.
\newblock \href {https://doi.org/10.1007/978-3-642-17333-2}
  {\path{doi:10.1007/978-3-642-17333-2}}.

\bibitem[Knu97]{Knuth1997ArtOfProgramming}
Donald~E. Knuth.
\newblock {\em The Art of Computer Programming, Volume 1: Fundamental
  Algorithms}.
\newblock Addison Wesley Longman Publishing Co., Inc., Redwood City, CA, USA,
  third edition, 1997.

\bibitem[Knu98]{Knuth1998ArtOfProgramming}
Donald~E. Knuth.
\newblock {\em The Art of Computer Programming, Volume 3: Sorting and
  Searching}.
\newblock Addison Wesley Longman Publishing Co., Inc., Redwood City, CA, USA,
  second edition, 1998.

\bibitem[KR21]{KebedeRakotondrajao2021}
Frether~Getachew Kebede and Fanja Rakotondrajao.
\newblock Parity alternating permutations starting with an odd integer.
\newblock {\em Enumerative Combinatorics and Applications}, 2021(2):Article
  {\#}S2R16, March 2021.
\newblock \href {https://doi.org/10.54550/eca2021v1s2r16}
  {\path{doi:10.54550/eca2021v1s2r16}}.

\bibitem[Lam58]{Lambert1758}
J.H. Lambert.
\newblock Observationes vari{\ae} in mathesin puram.
\newblock {\em Acta Helvetica Physico-Mathematico-Anatomico-Botanico-Medica},
  3:128--168, 1758.

\bibitem[Lam70]{Lambert1770}
J.H. Lambert.
\newblock Observations analytiques.
\newblock {\em Nouveaux Mémoires de l'Académie royale des {S}ciences et
  {B}elles-{L}ettres, de {B}erlin}, pages 225--244, 1770.

\bibitem[MR01]{MantaciRakotondrajao2001}
Roberto Mantaci and Fanja Rakotondrajao.
\newblock A permutations representation that knows what ``{E}ulerian'' means.
\newblock {\em {Discrete Mathematics \& Theoretical Computer Science}}, 4(2),
  January 2001.
\newblock URL: \url{https://dmtcs.episciences.org/271}.

\bibitem[Mun14]{Munagi2014}
Augustine Munagi.
\newblock Parity-alternating permutations and successions.
\newblock {\em Open Mathematics}, 12(9), January 2014.
\newblock \href {https://doi.org/10.2478/s11533-014-0421-2}
  {\path{doi:10.2478/s11533-014-0421-2}}.

\bibitem[Ran60]{Raney1960}
George~N. Raney.
\newblock Functional composition patterns and power series reversion.
\newblock {\em Transactions of the American Mathematical Society},
  94(3):441--451, 1960.
\newblock \href {https://doi.org/10.1090/s0002-9947-1960-0114765-9}
  {\path{doi:10.1090/s0002-9947-1960-0114765-9}}.

\bibitem[Slo19]{OEIS}
Neil J.~A. Sloane.
\newblock The {O}n-{L}ine {E}ncyclopedia of {I}nteger {S}equences.
\newblock Online, 2019.
\newblock URL: \url{https://oeis.org}.

\bibitem[SS85]{SimionSchmidt1985}
Rodica Simion and Frank~W. Schmidt.
\newblock Restricted permutations.
\newblock {\em European Journal of Combinatorics}, 6(4):383--406, December
  1985.
\newblock \href {https://doi.org/10.1016/s0195-6698(85)80052-4}
  {\path{doi:10.1016/s0195-6698(85)80052-4}}.

\bibitem[Sta01]{StanleyEC2}
Richard~P. Stanley.
\newblock {\em Enumerative {C}ombinatorics: {V}olume 2}.
\newblock Cambridge University Press, first edition, 2001.
\newblock \href {https://doi.org/10.1017/CBO9780511609589}
  {\path{doi:10.1017/CBO9780511609589}}.

\bibitem[Sta15]{StanleyCatalan}
Richard~P. Stanley.
\newblock {\em Catalan Numbers}.
\newblock Cambridge University Press, 2015.
\newblock \href {https://doi.org/10.1017/CBO9781139871495}
  {\path{doi:10.1017/CBO9781139871495}}.

\bibitem[Tan10]{Tanimoto2010}
Shinji Tanimoto.
\newblock Combinatorics of the group of parity alternating permutations.
\newblock {\em Advances in Applied Mathematics}, 44(3):225--230, March 2010.
\newblock \href {https://doi.org/10.1016/j.aam.2009.07.002}
  {\path{doi:10.1016/j.aam.2009.07.002}}.

\bibitem[Wil94]{Wilf1994}
Herbert~S. Wilf.
\newblock {\em generatingfunctionology}.
\newblock Elsevier, 1994.
\newblock \href {https://doi.org/10.1016/c2009-0-02369-1}
  {\path{doi:10.1016/c2009-0-02369-1}}.

\end{thebibliography}

\end{document}